\newcommand{\f}[2]{\frac{#1}{#2}}
\newcommand{\dpr}[2]{\langle #1,#2 \rangle}
\newcommand{\al}{\alpha}
\newcommand{\be}{\beta}
\newcommand{\ga}{\gamma}
\newcommand{\de}{\delta}
\newcommand{\De}{\Delta}
\newcommand{\ka}{\kappa}
\newcommand{\La}{\Lambda}
\newcommand{\vp}{\varphi}
\newcommand{\rd}{{\mathbf R}^d}
\newcommand{\rone}{\mathbf R^1}
\newcommand{\rtwo}{\mathbf R^2}
\newcommand{\cm}{\mathcal M}
\newcommand{\p}{\partial}
\newcommand{\beq}{\begin{equation}}
\newcommand{\eeq}{\end{equation}}
\newcommand{\beqna}{\begin{eqnarray*}}
\newcommand{\eeqna}{\end{eqnarray*}}
\newcommand{\beqn}{\begin{equation*}}
\newcommand{\eeqn}{\end{equation*}}
\newcommand{\bp}{\begin{proof}}
\newcommand{\ep}{\end{proof}}
\newcommand{\bprop}{\begin{proposition}}
\newcommand{\eprop}{\end{proposition}}
\newcommand{\bt}{\begin{theorem}}
\newcommand{\et}{\end{theorem}}
\newcommand{\bex}{\begin{Example}}
\newcommand{\eex}{\end{Example}}
\newcommand{\bc}{\begin{corollary}}
\newcommand{\ec}{\end{corollary}}
\newcommand{\bcl}{\begin{claim}}
\newcommand{\ecl}{\end{claim}}
\newcommand{\bl}{\begin{lemma}}
\newcommand{\el}{\end{lemma}}
\begin{document}
          \title{On the global regularity  of  the 2D critical Boussinesq  system  with  $\al>2/3$\thanks{HaddadiFard has been partially supported  by a  graduate research fellowship through a grant NSF-DMS \#  1313107. Stefanov's research has been supported in part by NSF-DMS  \#  1313107 and  \# 1614734.}}


          \author{Fazel Hadadifard\thanks{ Department of Mathematics,
University of Kansas,
1460 Jayhawk Blvd,
Lawrence, Kansas 66045-7594, f.hadadi@ku.edu}
         \and{Atanas Stefanov\thanks{Department of Mathematics,
University of Kansas,
1460 Jayhawk Blvd,
Lawrence, Kansas 66045-7594, stefanov@ku.edu}}}





         \pagestyle{myheadings} \markboth{Regularity of the critical 2D Boussinesq Equations}{Fazel Hadadifard, Atanas Stefanov} \maketitle

          \begin{abstract}
           This paper examines  the question for global regularity for the Boussinesq equation with critical fractional dissipation $(\al, \be):\al+\be=1$. 
 The main result states  that the system admits global regular solutions for all (reasonably) smooth and decaying data, as long as $\al>2/3$. This improves upon 
 some recent works \cite{JMWZ} and \cite{SW}. 
 
 The main new idea  is the introduction of a new, second generation Hmidi-Keraani-Rousset type, change of variables, which further improves the linear derivative in temperature term in the vorticity equation. This approach is then complemented by  new set of  commutator estimates (in both negative and positive index Sobolev spaces!), which may be of independent interest.          
                     \end{abstract}
\begin{keywords}
Boussinesq equations, fractional dissipation, global regularity
\end{keywords}

 \begin{AMS}
 35Q35, 35B65, 76B03
\end{AMS}
       \section{Introduction}
The two-dimensional (2D) Boussinesq equations with fractional dissipation is 
\begin{equation}
\label{BQE}
\left\{
\begin{array}{l}
\partial_t u + u\cdot \nabla u + \nu\, \Lambda^\alpha u =- \nabla p + \theta \mathbf{e}_2, \qquad x\in \rtwo, \,\, t>0, \\
\nabla \cdot u=0, \qquad x \in \rtwo, \,\, t>0, \\
\partial_t \theta + u\cdot \nabla \theta
+ \kappa\, \Lambda^{\beta}\theta =0,\qquad x\in \rtwo, \,\, t>0,\\
u(x,0) =u_0(x),\,\, \theta(x,0) =\theta_0(x), \qquad x\in \rtwo,
\end{array}
\right.
\end{equation}
where $u=u(x,t)=(u_1(x,t),u_2(x,t))$ denotes the  velocity vector field, $p=p(x,t)$ is the scalar  pressure,
the scalar function $\theta=\theta(x,t)$ is the temperature, $\mathbf{e}_2$ the unit vector in the vertical
direction, and $\nu\geq 0$, $\kappa\geq 0$, $0\leq \alpha \le 2$ and $0\leq \beta\le 2$
are real parameters. Here $\Lambda= \sqrt{-\Delta}$ is  the Zygmund
operator  defined through the Fourier transform,
$$
\widehat{\Lambda^\alpha f}(\xi) = |\xi|^\alpha \,\widehat{f}(\xi),
$$
where the Fourier transform and its inverse are given by
$$
\widehat{f}(\xi) = \int_{\\rtwo} e^{-i x\cdot \xi} \, f(x) \,dx, \ \ f(x)=(2\pi)^{-2}  
\int_{\\rtwo} e^{i x\cdot \xi} \, \hat{f}(\xi) \,d\xi. 
$$
This model is of importance in a number of studies on atmospheric turbulence, \cite{MB,P}.  The standard model (where both dissipations are taken to be the classical Laplacian, $\al=\be=2$) is a primary model for atmospheric fronts and oceanic circulation as well as in the study of Raleigh-Bernard convection, 
\cite{Con_D, Gill, Maj, P, Doering1, Doering2}. The fractional diffusion operators considered herein appear naturally in the study in hydrodynamics, \cite{DI} as well as anomalous diffusion in semiconductor growth, \cite{PS}. There are also other models in which the Boussinesq equations with fractional Laplacian naturally arise, namely in models where the kinematic and thermal diffusion is attenuated by the thinning of atmosphere,  \cite{Gill}. 

Mathematically, the problem for global regularity of \ref{BQE} is an interesting and a subtle one. Intuitively, the lower the values of $\al, \be$, the harder it is to prove that solutions emanating from sufficiently smooth and localized data persist globally. In particular, the problem with  no dissipation (i.e. $\nu=\ka=0$) remains open. This is very similar to the Euler equation in two and three spatial dimensions  and in fact numerous studies explore the possibility of finite time blow up, \cite{SaWu}. 

Next, we  take on the difficult task of reviewing the recent results regarding well-posedness issues for \ref{BQE}. Indeed, there has been tremendous interest in this problem in the last fifteen years. In the classical case, when the diffusion is given by the regular Laplacian (i.e.  $\al=\be=2$), the global regularity follows just as it does for the 2D Navier-Stokes model, \cite{DoeringG, MB}. In the works \cite{Ch, HL}, global regularity was proved in the presence of one full Laplacian, that is in the cases  $\al=2, \be=0$ or $\al=0, \be=2$. 
In more recent years, the full two parameter range of $\al, \be$ was explored in detail. Based on the currently available results, it is natural to draw the  conclusion that one expects global regularity in the cases $\al+\be\geq 1$, while the case $\al+\be<1$ generally remains open\footnote{in some numerical simulations, there was a reason to believe that finite time blow up might occur, but this is at present still a conjecture}.  We thus adopt  the   notion of criticality - 
namely, we say that a pair $(\al, \be)$ is subcritical if $\al+\be>1$, critical if $\al+\be=1$ and supercritical if $\al+\be<1$. 

As it was alluded above, in the supercritical regime the behavior of the solutions  remains a mystery. Apart from some numerical simulations,  the only rigorous results that we are aware of  is the eventual regularity of the solutions, \cite{Wu_Xu_Ye}, for appropriate supercritical regime of the diffusivity parameters.   To be sure, such statement does not, {\it per se} exclude a finite time blow up of some solutions. It remains to discuss the critical and subcritical cases. This is probably a good place to observe that if global regularity holds for critical pair $(\al_0, \be_0): \al_0+\be_0=1$, then it must hold for all subcritical pairs in the form\footnote{We believe that this statement, while not a rigorous result, can be made an exact theorem on a case by case basis, by just reworking a proof for $(\al_0, \be_0)$ to cover the higher dissipation cases (in either the $u$ or the $\theta$ variables)}  $(\al, \be_0), \al>\al_0$ and $(\al_0, \be): \be>\be_0$. Thus, clearly global regularity results on the critical line are superior, in the sense described above, to subcritical ones. That being said, the subcritical theory is far from obvious or well-understood. Many results have been put forward in the last ten or so years. The following (very incomplete and yet very long) list accounts for some recent accomplishments - \cite{MX, CV, YangJW, YXX, Zhao, YX_15, YX_16, YEX, Cha1, Dan, LLT, LaiPan, Xu, Ye_16, Ye_17}. 

Next, we give a full account of the global regularity results for diffusivity parameters on the critical line $\al+\be=1$.  First, in series of works, Hmidi-Keraani-Rousset, \cite{HKR1, HKR2} established global regularity in the two critical and endpoint cases $(\al, \be)=(1,0), (0,1)$. In their work, they employed clever change of variables, thus introducing a new hybrid quantity, depending on both vorticity and temperature\footnote{which is better suited (and looses less derivatives than either vorticity and temperature separately}. 
In a subsequent paper, by developing more sophisticated  function spaces, Jiu-Miao-Wu-Zhang, \cite{JMWZ}  were able to extend the global regularity results to the case $\al+\be=1$, 
$$
\al>\f{23-\sqrt{145}}{12}\approx 0.9132...
$$
Subsequently, the second name author, in collaboration with J. Wu, \cite{SW} significantly extended the results in \cite{JMWZ}, by covering the critical line $\al+\beta=1$, up to 
$$
\al>\f{\sqrt{1777}-23}{24}\approx 0.798103...
$$
Quite recently\footnote{after major part of this paper was completed}, we have learned that in \cite{WXXY}  Wu-Xu-Xue-Ye have managed to further lower the allowable $\alpha$ exponents to 
$$
\al>\f{10}{13}\sim 0.7692.
$$
These results were achieved thanks to more sophisticated commutator estimates, both in Sobolev and Besov spaces, by essentially working in the setup of Hmidi-Keraani-Rousset (HKR for short), \cite{HKR1}. It was our (informal) conclusion in \cite{SW} that tightening of the  commutator estimates in the HKR 
 variables has exhausted (or nearly exhausted) the possible improvements.  In other words, one needs to introduce better, more sophisticated change of variables, which in conjunction with sharp commutator estimates yields wider range of critical indices $(\al, \be)$, for which one has global regularity. 
 
 The purpose of this paper is to do just that. We aim at further improving upon the results in \cite{SW}. In particular, we still work in the regime\footnote{noting that the HKR framework takes a slightly different form in the case $\be>\f{1}{2}>\al$} $\al>\f{1}{2}>\be$, but in order to obtain better range, we perform a second generation HKR change of variables, which positions us for a better result. As we mention above, this is complemented by very precise commutator estimates, see Section \ref{sec:2.2}. 
 
 We note that  we do not, at this point, have anything new to say in the regime $\be>\f{1}{2}>\al$, for which the only available global regularity result is for $\al=0, \be=1$. We hope to be able to report on these cases in the near future. 
\subsection{Main result}
We are ready to state our main result. 
\begin{theorem}
\label{theo:1}
Consider the Boussinesq equation $\ref{BQE}$ with  
\[
 \frac{2}{3} < \alpha < 1, \alpha+ \beta= 1.
\]
Suppose also that 
$$
\|u_0\|_{H^{1+\rho}}<\infty, \ \ \|\theta_0\|_{H^{1 + \be+\rho}(R^2)}+\|\nabla\theta_0\|_{L^\infty}<\infty,
$$
where $0 < \rho << 1$. Then,  $\ref{BQE}$ has a unique global solution $(u, \theta)$ satisfying, for any $T > 0$ and moreover 
\[
(u, \theta) \in C ([0, T]; H^{1+\rho}(R^2)  \times H^{2+ \rho}(R^2)).
\]
\end{theorem}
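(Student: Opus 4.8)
The plan is to reduce the global existence statement to a set of global-in-time a priori bounds and to close the argument with a continuation criterion. First I would establish local well-posedness in the class $H^{1+\rho}\times H^{1+\be+\rho}$ (with $\nabla\theta_0\in\li$) by a standard mollification and energy scheme, producing a maximal existence time $T^*$ together with a Beale--Kato--Majda--type blow-up alternative: the solution persists as long as $\int_0^{T^*}(\|\nabla u\|_{\li}+\|\nabla\theta\|_{\li})\,dt<\infty$. It then suffices to bound every relevant norm on an arbitrary finite interval $[0,T]$, which forces $T^*=\infty$ and yields the asserted solution.

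The analytic heart is the vorticity formulation. With $\om=\p_1 u_2-\p_2 u_1$ one has
\[
\p_t\om+u\cdot\nabla\om+\nu\La^\al\om=\p_1\theta,\qquad \p_t\theta+u\cdot\nabla\theta+\ka\La^\be\theta=0,
\]
so the sole obstruction to closing the vorticity estimate is the derivative-losing forcing $\p_1\theta$. Following Hmidi--Keraani--Rousset I would pass to $G_1=\om-\mathcal R_1\theta$ with $\mathcal R_1=\nu^{-1}\La^{-\al}\p_1$, chosen so that $\nu\La^\al\mathcal R_1\theta=\p_1\theta$ cancels the bad term; the cost is a commutator $[\mathcal R_1,u\cdot\nabla]\theta$ and a \emph{new} linear term $\ka\mathcal R_1\La^\be\theta=\ka\nu^{-1}\p_1\La^{\be-\al}\theta$, whose order in $\theta$ has dropped from $1$ to $2\be=2(1-\al)$. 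The new ingredient is to iterate once more: setting $G=G_1-\mathcal R_2\theta$ with $\mathcal R_2=\ka\nu^{-2}\p_1\La^{\be-2\al}$ cancels this term as well and leaves only a residual linear forcing of order $3-4\al$ --- strictly below the dissipation order $\al$ for every $\al>3/5$ --- together with the commutators $[\mathcal R_1+\mathcal R_2,u\cdot\nabla]\theta$. Each reduction renders the derivative-losing forcing more benign, and it is this extra room, exploited through the sharp commutator estimates below, that should push the admissible range down to $\al>2/3$, improving on the single-change framework of the earlier works.

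With this favorable equation for $G$ I would run a nested hierarchy of a priori estimates. The maximum principle for the transport--diffusion equation of $\theta$ gives $\|\theta(t)\|_{L^p}\le\|\theta_0\|_{L^p}$ for all $1\le p\le\infty$ and, together with a commutator/pointwise argument, propagates $\|\nabla\theta\|_{\li}$ on $[0,T]$. An $\lt$ energy estimate for $G$, using the coercivity of $\nu\La^\al$, controls $\|G\|_{\lt}$; since $\om=G+(\mathcal R_1+\mathcal R_2)\theta$ with $\mathcal R_1\theta,\mathcal R_2\theta$ dominated by Sobolev norms of $\theta$ already in hand, and since $u$ is recovered from $\om$ by the Biot--Savart law, this yields $u\in H^1$. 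The decisive step is the $H^\rho$ bound for $G$: pairing the equation with $\jnab^{2\rho}G$ gives
\[
\f12\f{d}{dt}\|G\|_{H^\rho}^2+\nu\|G\|_{H^{\rho+\al/2}}^2=\dpr{F}{G}_{H^\rho},\qquad F=[\mathcal R_1+\mathcal R_2,u\cdot\nabla]\theta+\ka\mathcal R_2\La^\be\theta,
\]
and estimating the right-hand side returns $u\in H^{1+\rho}$; a final parabolic-energy estimate for the $\theta$-equation, now that $u$ is sufficiently regular, upgrades $\theta$ to the asserted regularity and closes the loop.

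The main obstacle is precisely this right-hand side, namely controlling the commutators $[\mathcal R_1,u\cdot\nabla]\theta$ and $[\mathcal R_2,u\cdot\nabla]\theta$ by the quantities already under control, with at most an exactly borderline (logarithmic) loss that Gr\"onwall can absorb. Criticality, $\al+\be=1$, leaves no slack: $\mathcal R_1$ is a genuine order-$\be$ operator rather than the order-$0$ Riesz transform of the classical endpoint case, so the commutator no longer gains a full derivative and naive product estimates fail by exactly the critical amount. This is where the sharp commutator bounds of Section~\ref{sec:2.2} are indispensable, and why they are needed in both positive- and negative-index Sobolev spaces: the positive-index estimates govern the top-order $H^\rho$ balance for $G$, while the negative-index estimates allow the low-frequency part of the forcing to be paired against $G$ without loss of integrability. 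Once these are in place, every differential inequality is of Gr\"onwall type with locally integrable coefficients, the a priori bounds hold on each $[0,T]$, and the continuation criterion delivers the global solution of Theorem~\ref{theo:1}.
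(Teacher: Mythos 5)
Your change of variables is exactly the paper's second--generation Hmidi--Keraani--Rousset transformation: your $G=\om-(\mathcal R_1+\mathcal R_2)\theta$ coincides with the paper's $f$ in \ref{W1}, and your identification of the residual linear forcing of order $3-4\al$ plus two commutators is correct. The gap is in how you propose to close the argument. You claim that an $L^2$ and then $H^\rho$ energy estimate for $G$ ``returns $u\in H^{1+\rho}$'' and that this, combined with a BKM-type continuation criterion, finishes the proof. In two dimensions, $u\in H^{1+\rho}$ with $0<\rho\ll 1$ does \emph{not} control $\|\nabla u\|_{L^\infty}$ (one needs strictly more than two derivatives in $L^2$), so the integral $\int_0^{T^*}\|\nabla u\|_{L^\infty}\,dt$ in your continuation criterion cannot be bounded from the norms you produce. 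The same circularity infects your treatment of $\theta$: propagating $\|\nabla\theta\|_{L^\infty}$ is not a maximum-principle fact but is precisely the content of Proposition \ref{prop:lio} (Theorem 1.2 of \cite{JMWZ}), whose hypothesis \ref{op} is $\|\nabla u_f\|_{L^\infty_t L^\infty_x}<\infty$ --- the very quantity your scheme never reaches. So the loop you describe does not close.

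The paper resolves exactly this difficulty by a different mechanism, and explicitly remarks that Sobolev energy bounds for $f$ ``do not come cheaply and by themselves'': for small $\al$ there are integrability issues, not just derivative-counting issues. Instead of an $H^\rho$ hierarchy, the paper runs an $L^2\to L^4\to L^6$ hierarchy of $L^p$ bounds for $f$ (Propositions \ref{fazel61}, \ref{propo:3}, \ref{propo:4}, proved in scaled variables with a small parameter $\epsilon_0$, using the commutator estimates of Section \ref{sec:2.2}), and then converts the purely integrability-type bound \ref{1015}, $\sup_t\|f\|_{L^6}<\infty$, into a Lipschitz bound on $u_f$ via the bootstrap of Proposition \ref{prop:WXXY}: an $L^6$ bound on the solution of \ref{W1} upgrades to $f\in B^{3\al-2}_{6/(3\al-2),\infty}$, whence $\nabla u_f\in L^\infty$ by Sobolev embedding. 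Note that this is also where the threshold $\al>2/3$ genuinely enters --- it is needed both for $3\al-2>0$ (so the Besov index is positive and the embedding applies, with $q=6$ lying in the admissible window $(2/\al,\,2/(1-\al))$) and for the commutator estimates --- whereas your order-counting heuristic $3-4\al<\al$ would only suggest $\al>3/5$ and is not the binding constraint. Without this $L^p$-to-Besov bootstrap (or a substitute for it), the a priori estimates you propose, even if each were provable, would not verify the regularity criterion and hence would not yield Theorem \ref{theo:1}.
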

{\bf Note:} We believe that the results in Theorem \ref{theo:1} are optimal, if one uses the second generation HKR transformations, as explained below in \ref{W1} below. In order to get improvements in the range $\al\in (1/2, 2/3)$, one needs to perform a third order HKR transformation and so on. 
\subsection{Some initial reductions}
It is well-known that for sufficiently smooth and decaying data, the problem has a local solution, say in some interval $[0,T]$. The global regularity problem then reduces to showing that $T=\infty$. One proceeds to establish that by a contradiction argument. That is, if one assumes that $T<\infty$, the contradiction will arise out of impossibility of blow up at time $T$. Thus, one seeks to prove {\it a priori} estimates on the solutions, which will prevent them from blowing up.  Let us mention for now, that the problem allows for some elementary {\it a priori} estimates 
\begin{eqnarray}{\label{fazel2}}
\left\{
  \begin{array}{ll}
\|\theta(t)\|_{L^p} \leq \|\theta_0\|_{L^p}, \textup{for}~~ p \in [1, \infty],\\
\|\theta(t)\|_{L^2}^2+ 2 \kappa \int_0^t \|\Lambda^{\frac{\beta}{2}} \theta(\tau)\|_{L^2}^2 d \tau = \|\theta_0\|_{L^2}^2,\\
\| u(t)\|_{L^2}^2+ 2 \nu \int_0^t \|\Lambda^{\frac{\alpha}{2}} u(\tau)\|_{L^2}^2 d\tau \leq ( \|u_0\|_{L^2}^2+ t \|\theta_0\|_{L^2}^2).
\end{array}
\right.
\end{eqnarray}
which are valid,  whenever $0<t<T$. These will be used  repeatedly in the argument, but as such they will be inadequate to conclude global regularity, they are just to weak for that.  From now on, due to the fact that the precise values of the physical constants $\ka, \nu>0$ are unimportant in the arguments, we set them to one, $\ka=\nu=1$.

\subsection{Change of variables: vorticity equation and beyond} 
It turns out that it is  easier to work with   the  vorticity equation.  A quick inspection shows that the vorticity $\omega = \nabla \times u$, a scalar quantity,  satisfies
\begin{eqnarray}{\label{fazel22}}
\left\{
  \begin{array}{ll}
\partial_t \omega+ u . \nabla \omega+  \Lambda^{\alpha} \omega= \partial_1 \theta,\\
u= \nabla^{\perp} \psi, ~~~~\Delta \psi= \omega ~~~~ \textup{or}~~~ u= \nabla^{\perp} \Delta^{-1} \omega
\end{array}
\right.
\end{eqnarray}
Therefore the problem reduces to the problem of considering the regularity and existence of the classical solution of the equations
\begin{eqnarray}{\label{fazel3}}
\left\{
  \begin{array}{ll}
    \omega_t+ \Lambda^{\alpha} \omega+ u \cdot\nabla \theta= \partial_1 \theta\\
     \theta_t+ \Lambda^{\beta} \theta + u \cdot \nabla \theta = 0
\end{array}
\right.
\end{eqnarray}
One notices of course, that the right-hand side of the vorticity equation has a full derivative acting on $\theta$, which is challenging to control. The strategy (first applied by Hmidi-Keraani-Rousset, \cite{HKR1}) is to consider a combined quantity of the vorticity and (a derivative of) the temperature $\theta$, which one would eventually be able to control via energy estimates.  
More precisely, note that since we can write 
$$
\Lambda^{\alpha} \omega-\partial_1 \theta= 
\Lambda^{\alpha}[ \omega-\Lambda^{-\al}\partial_1 \theta],
$$
 it is worth introducing the quantity $G= \omega-\Lambda^{-\alpha} \partial_1 \theta$. For it, we have the equation, 
$$
 G_t+ u \cdot \nabla G+ \Lambda^{\alpha} G= \Lambda^{\beta- \alpha} \partial_1 \theta+ [R_{\alpha}, u\cdot \nabla] \theta.
$$
This is the evolution equation used in \cite{HKR1} and subsequent papers, \cite{JMWZ}, \cite{SW}, \cite{WXXY}.  It turns out however that the presence of the factor $\Lambda^{\beta- \alpha} \partial_1 \theta$ is still too rough  in the range of $\al>\f{2}{3}$, thus preventing  us from getting the desired bounds. 
 In order  to remove it as is done above in the $G$ construction,  we introduce a new variable $f= G- \Lambda^{\beta- 2 \alpha} \partial_1 \theta$. This is the second generation HKR change of variables that we have alluded to above. 
 We have 
$$
G_t+ u \cdot \nabla G+ \Lambda^{\alpha} (G- \Lambda^{\beta- 2 \alpha} \partial_1 \theta) = [R_{\alpha}, u\cdot \nabla] \theta.
$$
Again by adding and subtracting some terms and using the equation for $\theta$, we get
\begin{eqnarray*}
(G- \Lambda^{\beta- 2 \alpha} \partial_1 \theta)_t  &+&  u . \nabla (G- \Lambda^{\beta- 2 \alpha} \partial_1 \theta)+ \Lambda^{\alpha} (G- \Lambda^{\beta- 2 \alpha} \partial_1 \theta)+  \Lambda^{\beta- 2 \alpha} \partial_1 \theta_t+ \\
&+& 
u\cdot \nabla \Lambda^{\beta- 2 \alpha} \partial_1 \theta= [R_{\alpha}, u\cdot \nabla] \theta
\end{eqnarray*}
which gives
\[
f_t+ u . \nabla f+ \Lambda^{\alpha} f+  (-\Lambda^{2 (\beta- \alpha)} \partial_1 \theta- \Lambda^{\beta- 2 \alpha} \partial_1 (u \cdot \nabla \theta)) + u. \nabla \Lambda^{\beta- 2 \alpha} \partial_1 \theta\\
= [R_{\alpha}, u \cdot  \nabla] \theta,
\]
hence
\begin{equation}\label{W1} 
f_t+ u \cdot \nabla f+ \Lambda^{\alpha} f=  \Lambda^{2 (\beta- \alpha)} \partial_1 \theta+ [R_{\alpha}, u\cdot  \nabla] \theta+[\Lambda^{\beta- 2 \alpha} \partial_1, u \cdot  \nabla ] \theta. 
\end{equation}
{\it Note that since $\be-\al=1-2\al<0$,  the term $[\Lambda^{\beta- 2 \alpha} \partial_1, u \cdot  \nabla ] \theta= [\Lambda^{\beta-\al} R_{\al}, u \cdot  \nabla ] \theta$  will always be   easier to treat than the similar term $[R_{\alpha}, u\cdot  \nabla] \theta$. For this reason, we will ignore this term in our discussion, with the understanding that a rigorous proof  can always be produced by following the corresponding  proof for the (harder to treat) commutator term $[R_{\alpha}, u\cdot  \nabla] \theta$.} 

 Based on the definition above
\[
f= G- \Lambda^{ \beta- 2\alpha} \partial_1 \theta= G- R_{\alpha} \Lambda^{\beta- \alpha} \theta= \omega - R_{\alpha} \theta- R_{\alpha} \Lambda^{ \beta- \alpha} \theta= \omega - (R_{\alpha} + R_{\alpha} \Lambda^{ \beta- \alpha}) \theta,
\]
therefore
\begin{equation}
\label{po}
u= \nabla^{\perp} \Delta^{-1} \omega= \nabla^{\perp} \Delta^{-1} f + \nabla^{\perp} \Delta^{-1} R_{\alpha} (I+ \Lambda^{\beta- \alpha}) \theta:= u_f+ u_{\theta}.
\end{equation}
With this definition it is clear that,    $u_f \sim \Lambda^{-1} f$ and $u_{\theta} \sim \Lambda^{- \alpha} \theta+ \Lambda^{1- 3 \alpha} \theta$.\\

\subsection{Regularity criteria for the Boussinesq system}  
\label{sec:1.3}
The question for global regularity is reduced to a certain, so called regularity criteria - namely a quantity\footnote{usually a norm of the solution}, which if controlled up to time $T$, will keep all higher Sobolev norms finite and non-blowing up to time $T$, hence the global regularity. 
  This is a well-known problem in many quasilinear problems, for example in the standard Navier-Stokes posed on \\ $\rone_+\times \rd$, it suffices to control {\it a priori} $\sup_{0\leq t\leq T} \|u(t)\|_{\dot{H}^{d/2}}$ or 
  $\sup_{0\leq t\leq T} \|u(t)\|_{L^d}$ or some mixed norm quantities of the form $\|u\|_{L^p_t(0,T) L^q(\rd)}, \f{2}{p}+\f{d}{q}=1, 2\leq p\leq \infty$. These are all quantities, which of course scale nicely according to the natural scaling of the NLS problem. One difficulty with \ref{BQE} is that the problem does not have scaling invariance, outside of the case $\al=\be$. Nevertheless, there exists a regularity result for the Boussinesq system, namely Theorem 1.2 in \cite{JMWZ}. Although, it is not quite stated in the clean form that we described above for NLS, it provides for a regularity result for the temperature equation\footnote{Given the form of the equation \ref{po}, the motivation for the form of $u$ below  becomes clear} in \ref{BQE}. More precisely, we have 
  \begin{proposition}(Theorem 1.2 in \cite{JMWZ})
  \label{prop:lio}
  Let $\be\in (0,1)$, $\tilde{u}: \nabla\cdot \tilde{u}=0$ with 
  \begin{equation}
  \label{op}
  M=\|\tilde{u}\|_{L^\infty(0,T) L^2(\rtwo)}+ \|\nabla \tilde{u}\|_{L^\infty(0,T) L^\infty(\rtwo)}<\infty.
  \end{equation}
  Assume that $\theta: \theta\in L^2(\rtwo), \nabla \theta\in L^2 \cap L^\infty $ satisfies the  generalized critical surface quasi-geostrophic equation 
  \begin{equation}
  \label{temp}
  \left| \begin{array}{l}
  \theta_t+\La^\be \theta+u\cdot \nabla \theta=0 \\
  u=\tilde{u}+v, v= -\nabla^\perp \La^{-2+\be}\p_1 \theta \\
  \theta(0,x)=\theta_0(x).
  \end{array}
  \right.
  \end{equation}
 Then, the \ref{temp} has an unique solution $\theta \in C([0,T), H^1(\rtwo))$,
 $$
 \|\nabla \theta\|_{L^\infty(0,T) L^\infty(\rtwo)}\leq C(T,M, \|\nabla_0\|_{H^1}, \|\nabla \theta_0\|_{L^\infty}). 
 $$
for some continuous function $C$. 
  \end{proposition}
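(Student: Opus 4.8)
The plan is to treat \ref{temp} as a critical drift--diffusion (generalized critical SQG) problem and to establish the Lipschitz bound in three stages, progressively upgrading the regularity of $\theta$. Throughout one uses that both pieces of the drift are divergence free: $\nabla\cdot\tilde u=0$ by hypothesis and $\nabla\cdot v=-\nabla\cdot\nabla^\perp\Lambda^{-2+\be}\p_1\theta=0$, so that $u=\tilde u+v$ is incompressible and \ref{temp} may be written in conservation form $\theta_t+\La^\be\theta+\nabla\cdot(u\theta)=0$. First I would record the elementary a priori bounds: the maximum principle for the fractional transport--diffusion equation gives $\|\theta(t)\|_{L^p}\le\|\theta_0\|_{L^p}$ for every $p\in[1,\infty]$, and testing against $\theta$ yields $\|\theta(t)\|_{\lt}^2+2\int_0^t\|\La^{\be/2}\theta\|_{\lt}^2\,d\tau=\|\theta_0\|_{\lt}^2$, hence $\theta\in L^\infty_t\lt\cap L^2_t\dot H^{\be/2}$. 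The essential point to keep in mind is that the self-consistent drift is genuinely rough: writing $v=\mathcal R\La^\be\theta$ with $\mathcal R$ a matrix of zeroth-order Riesz-type operators, $v$ lives a full $\be$ derivatives below $\theta$, and since the dissipation is exactly $\La^\be$ the problem sits precisely at criticality.

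The heart of the matter is the second stage: a uniform H\"older estimate $\sup_{[0,T]}\|\theta(t)\|_{C^\de}\le C$ for some small $\de\in(0,\be)$, with $C$ depending only on $T$, $M$ and the data norms. Here I would run the nonlocal De Giorgi iteration of Caffarelli--Vasseur, adapted to the dissipation $\La^\be$ and to the incompressible drift $u=\tilde u+v$. The Lipschitz part $\tilde u$ enters only through $\|\tilde u\|_{L^\infty\lt}$ and $\|\nabla\tilde u\|_{L^\infty\li}\le M$, and contributes harmless lower-order terms in the level-set energy inequalities. The delicate contribution is that of $v$: because $v$ is an order-$\be$ operator applied to $\theta$ rather than a bounded function, the "velocity in BMO" input that drives critical SQG is unavailable, and one must instead exploit the divergence-free (conservation) form together with the critical scaling, so that the drift terms in the De Giorgi energy and oscillation estimates are borrowed from a fraction of the $\La^\be$ dissipation.

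This matching of scales -- drift of order $\be$ against dissipation of order $\be$ -- is exactly what allows the iteration to close, and it is the main obstacle: one has to verify that the gain of integrability at each De Giorgi step survives the rough drift, which is where the precise structure $v=\mathcal R\La^\be\theta$ and incompressibility are indispensable. In carrying this out the most likely difficulty is controlling the contribution of the far field of $v$ in the truncated energy inequalities, since $v$ is not locally integrable against itself the way an $L^\infty$ or BMO velocity would be; I expect this to force a careful splitting into near and far pieces, with the near piece absorbed by the dissipation and the far piece estimated through $\|\theta\|_{\li}$ and the decay of the Riesz kernels.

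Finally, with $\theta\in C^\de$ in hand I would upgrade to the Lipschitz bound through a nonlinear maximum principle for $g=\nabla\theta$. Differentiating \ref{temp} gives $\p_t g+u\cdot\nabla g+\La^\be g=-(\nabla u)^{\!\top}g$. Evaluating $\tfrac12\p_t|g|^2$ at a point $\bar x(t)$ where $|g|$ attains its maximum, the advection drops out, and the C\'ordoba--C\'ordoba together with the Constantin--Vicol lower bound produce a coercive dissipative term $\gtrsim\|g(t)\|_{\li}^{2+\be}\|\theta_0\|_{\li}^{-\be}$. The stretching term splits as $-(\nabla\tilde u)^{\!\top}g-(\nabla v)^{\!\top}g$; the first is bounded by $M\|g\|_{\li}^2$, while the second, again of order $\be$ in $g$, is controlled with the help of the $C^\de$ bound by an arbitrarily small fraction of the coercive term plus a term $\lesssim\|g\|_{\li}^2\log(2+\|g\|_{\li})$. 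Since $\be>0$ furnishes a genuine power gain $\|g\|_{\li}^{2+\be}$, this closes a differential inequality of the form $\tfrac{d}{dt}\|g\|_{\li}\le -c\,\|g\|_{\li}^{1+\be}\|\theta_0\|_{\li}^{-\be}+M\|g\|_{\li}+\mathrm{l.o.t.}$, which is globally bounded on $[0,T]$ and yields the asserted bound on $\|\nabla\theta\|_{L^\infty(0,T)\li}$. The continuity statement $\theta\in C([0,T);H^1)$ then follows from a routine $H^1$ energy estimate, since $\|\nabla u\|_{\li}\le M+\|\nabla v\|_{\li}$ is now under control and closes a Gr\"onwall inequality for $\|\nabla\theta(t)\|_{\lt}$.
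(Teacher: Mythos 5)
Your proposal sets out to prove a result that this paper never proves: Proposition \ref{prop:lio} is imported verbatim, with attribution, as Theorem 1.2 of \cite{JMWZ}, and is used downstream as a black box. The only meaningful comparison is therefore with the proof in \cite{JMWZ}, which is a nonlinear-maximum-principle argument in the style of Constantin--Vicol \cite{CV}, run directly on $g=\nabla\theta$ --- essentially your third stage --- and which does \emph{not} pass through a De Giorgi/H\"older step. So your stage 3 has the right skeleton (the lower bound for the $\Lambda^{\beta}$-dissipation at a maximum point of $|g|$, of size $c|g(\bar x)|^{2+\beta}\|\theta\|_{L^\infty}^{-\beta}$, perturbative treatment of $\nabla\tilde u$, near/far splitting of $\nabla v$), but the hard content of the theorem sits exactly in the step you defer to stage 2.

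Stage 2 is the genuine gap. The Caffarelli--Vasseur scheme requires the drift to lie in a critically scaled space (for critical SQG, velocity in $L^\infty_t BMO_x$, which comes for free because there the velocity is an order-zero operator of $\theta$ and $L^\infty$ is the scale-invariant norm). Here $v=\mathcal{R}\Lambda^{\beta}\theta$ has \emph{positive} order $\beta$, and none of your stage-1 bounds ($\theta\in L^\infty_t(L^2\cap L^\infty)$, $L^2_t\dot H^{\beta/2}$) control it at small scales. Worse, under the scaling you yourself identify, $\theta_\lambda(x,t)=\lambda^{-1}\theta(\lambda x,\lambda^{\beta}t)$, the Lipschitz norm is the invariant one, so $\|\theta\|_{L^\infty}$ and $[\theta]_{C^\delta}$, $\delta<1$, are \emph{supercritical} quantities (they scale like $\lambda^{-1}$ and $\lambda^{\delta-1}$): an $L^2\to L^\infty\to C^\delta$ De Giorgi program starts from, and aims at, information strictly below criticality, so ``borrowing a fraction of the dissipation'' cannot close it --- at exact criticality constants, not scalings, decide, and no smallness is available. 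The same gap resurfaces in stage 3: the claimed bound of $(\nabla v)^{\top}g$ by $\epsilon\cdot(\mathrm{coercive\ term})+C\|g\|_{L^\infty}^2\log(2+\|g\|_{L^\infty})$ is precisely what has to be proven, and it is not routine. Writing $\delta_z g(\bar x)=g(\bar x)-g(\bar x-z)$ and $D(\bar x)=c\int_{\rtwo}|\delta_z g(\bar x)|^2|z|^{-2-\beta}\,dz$ for the pointwise dissipation, the standard Constantin--Vicol near-field estimate
$$
\left|\int_{|z|\leq r}K(z)\,\delta_z g(\bar x)\,dz\right|\leq D(\bar x)^{1/2}\left(\int_{|z|\leq r}|K(z)|^2|z|^{2+\beta}\,dz\right)^{1/2},
\qquad |K(z)|\sim |z|^{-2-\beta},
$$
diverges, since $\int_0^r\rho^{-1-\beta}\,d\rho=\infty$ --- in contrast with critical SQG, where the corresponding factor is $(2\pi r)^{1/2}$. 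Extra cancellations (the maximum-point identity $\nabla|g|^2(\bar x)=0$, the precise Riesz structure of $\mathcal{R}$, integration by parts in the far field) must be exploited, and that is the technical core of the proof in \cite{JMWZ}. As written, your argument assumes the theorem's content at stage 2 and again at stage 3.
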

 Having $\theta$ as smooth as guaranteed by Proposition \ref{prop:lio}  in turn allows us to conclude  the regularity of $u$ in the full Boussinesq system \ref{BQE}. Thus,   the regularity criteria,  which we need,  is exactly 
 $$
  M_T=\sup_{0\leq t\leq T} [\|u_f\|_{L^\infty(0,t)L^\infty_x(\rtwo)}+ \|\nabla u_f\|_{L^\infty(0,t)L^\infty_x(\rtwo)}]<\infty.
  $$
 In order to extract an easy to verify quantity, we make use of the following result.
 \begin{proposition}(Lemma 2.5, p. 1969, \cite{WXXY})
 \label{prop:WXXY}
 Let $\al, \beta: \al+\be\leq 1, \f{1}{2}<\al<1$ and 
 \begin{equation}
 \label{1014}
 G_t+u\cdot \nabla G+\La^\al G=[R_\al, u\cdot] \nabla +\La^{\be-\al} \p_{x_1}\theta.
 \end{equation}
 Then, if $ \f{2}{1-\al}>q>\f{2}{\al}$ and 
 $$
 \sup_{0\leq t\leq T} \|G(t, \cdot)\|_{L^q(\rtwo)}<\infty,
 $$
 then for any $0\geq s<\max(3\al-2,0)$, one has the bound 
 $$
 \sup_{0\leq t\leq T} \|G(t, \cdot)\|_{B^{s}_{r,\infty}}<\infty,
 $$
 where 
 $$
 \f{2}{2\al-1}<r\leq \f{2q}{2-(1-\al)q}.
 $$
 \end{proposition}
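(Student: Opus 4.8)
The plan is to run a frequency-localized maximum-principle argument for the transport--fractional-diffusion equation \ref{1014}, using the coercivity of $\La^\al$ to trade the hypothesis $G\in L^\infty_T L^q$ for a simultaneous gain of smoothness and integrability. Throughout, the drift is reconstructed from $G$ by Biot--Savart, $u=\nabla^\perp\Delta^{-1}\omega$ with $\omega=G+R_\al\theta$, so that $u\sim\La^{-1}G$ modulo a smoother $\theta$-contribution; since the hypothesis forces $q>2/\al>2$, this gives $u\in W^{1,q}(\rtwo)\hookrightarrow C^{1-2/q}$, while $\theta$ carries the conserved bounds $\|\theta(t)\|_{L^2\cap L^\infty}\le\|\theta_0\|_{L^2\cap L^\infty}$ from \ref{fazel2}.

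First I would apply $\Delta_j$ to \ref{1014}, producing
\[
\partial_t \Delta_j G + u\cdot\nabla\Delta_j G + \La^\al \Delta_j G = \Delta_j F + [u\cdot\nabla,\Delta_j]G,
\]
where $F=\La^{\be-\al}\partial_1\theta+[R_\al,u\cdot\nabla]\theta$. Pairing with $|\Delta_j G|^{r-2}\Delta_j G$, the incompressibility $\nabla\cdot u=0$ annihilates the transport term, and the C\'ordoba--C\'ordoba (generalized Bernstein) positivity inequality supplies the coercive lower bound $\int|\Delta_j G|^{r-2}\Delta_j G\,\La^\al\Delta_j G\,dx\ge c\,2^{j\al}\|\Delta_j G\|_{L^r}^r$ for $j\ge0$. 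This yields
\[
\f{d}{dt}\|\Delta_j G\|_{L^r}+c\,2^{j\al}\|\Delta_j G\|_{L^r}\le C\bigl(\|\Delta_j F\|_{L^r}+\|[u\cdot\nabla,\Delta_j]G\|_{L^r}\bigr),
\]
and, integrating against $e^{-c2^{j\al}(t-\tau)}$ (whose time integral contributes the smoothing factor $2^{-j\al}$),
\[
\|\Delta_j G(t)\|_{L^r}\le e^{-ct2^{j\al}}\|\Delta_j G(0)\|_{L^r}+C\,2^{-j\al}\sup_{0\le\tau\le t}\bigl(\|\Delta_j F(\tau)\|_{L^r}+\|[u\cdot\nabla,\Delta_j]G(\tau)\|_{L^r}\bigr).
\]

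The crux is to multiply by $2^{js}$ and bound the two driving terms uniformly in $j$. For the linear source, Bernstein together with the conserved $\theta$ bound give, in the worst case $\be=1-\al$, the estimate $\|\Delta_j\La^{\be-\al}\partial_1\theta\|_{L^r}\le C\,2^{j(1+\be-\al)}\|\Delta_j\theta\|_{L^r}\le C\,2^{j(2-2\al)}\|\theta\|_{L^r}$, so after the $2^{-j\al}$ gain and the weight $2^{js}$ this contributes $2^{j(s+2-3\al)}\|\theta\|_{L^r}$; boundedness in $j$ forces exactly $s\le 3\al-2$, and a nonempty range with $s\ge0$ demands $\al>2/3$ --- this single term is the origin of both the threshold and the sharp Besov index. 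The commutator $[R_\al,u\cdot\nabla]\theta$ is treated by a Calder\'on-type commutator estimate, fed by the H\"older control of $u$ from $\|G\|_{L^q}$ and the $\theta$ bounds, and does not tighten the constraint on $s$; the admissible window for $r$ --- its upper end $\f{1}{q}-\f{1}{r}\le\f{1-\al}{2}$, i.e. $r\le\f{2q}{2-(1-\al)q}$, and its lower end $r>\f{2}{2\al-1}$ --- is precisely the Bernstein bookkeeping needed to upgrade the $L^q$ control on $G$ (through $u$) to the $L^r$ commutator bounds while keeping the commutator gain positive.

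The hard part will be the transport commutator $[u\cdot\nabla,\Delta_j]G$: the hypothesis yields only $\nabla u\sim G\in L^q$, never $\nabla u\in L^\infty$, so the classical Lipschitz-drift estimate is unavailable. Instead I would exploit the H\"older regularity $u\in C^\gamma$, $\gamma=1-2/q$, in a Bony paraproduct decomposition, giving $\|[u\cdot\nabla,\Delta_j]G\|_{L^r}\le C\,2^{j(1-\gamma)}\|u\|_{C^\gamma}\|\Delta_j G\|_{L^r}$; the frequency loss $2^{j(1-\gamma)}$ is beaten by the dissipative gain $2^{-j\al}$ exactly when $1-\gamma<\al$, i.e. $q>2/\al$, which is why this hypothesis appears. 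Summing the three contributions, multiplying by $2^{js}$, and taking $\sup_j$ and $\sup_{0\le t\le T}$ then closes the a priori bound in $B^s_{r,\infty}$ for $0\le s<3\al-2$ and $r$ in the stated range; the initial-data term $2^{js}e^{-ct2^{j\al}}\|\Delta_j G(0)\|_{L^r}$ is finite for the smooth local solution and is harmless for $t>0$ thanks to the same instantaneous smoothing, which completes the argument.
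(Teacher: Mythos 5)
the paper contains no proof of Proposition \ref{prop:WXXY} at all --- it is imported verbatim from \cite{WXXY} (Lemma 2.5 there) and used as a black box, so there is no internal argument to compare yours against. Judged on its own terms, your sketch does follow the right family of techniques (frequency localization, the C\'ordoba--C\'ordoba/generalized Bernstein coercivity $\int |\De_j G|^{r-2}\De_j G\,\La^\al \De_j G\,dx \ge c\,2^{j\al}\|\De_j G\|_{L^r}^r$, Duhamel with the gain $2^{-j\al}$), and you correctly identify the forcing $\La^{\be-\al}\p_1\theta$ as the origin of the threshold $s<3\al-2$ and of the requirement $\al>2/3$. But as a proof it has genuine gaps.

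The most serious one is the transport commutator. The bound $\|[u\cdot\nabla,\De_j]G\|_{L^r}\le C 2^{j(1-\gamma)}\|u\|_{C^\gamma}\|\De_j G\|_{L^r}$ is false as stated: in the Bony decomposition of $[u\cdot\nabla,\De_j]G$ only the paraproduct piece is supported near frequency $2^j$; the high-low and remainder pieces involve all blocks $\De_{j'}G$ with $j'\lesssim j$ and $j'\gtrsim j$, so after weighting the correct right-hand side carries $2^{-js}\|G\|_{B^s_{r,\infty}}$ rather than the single block. This is not cosmetic: once $\|G\|_{B^s_{r,\infty}}$ itself appears on the right with an $O(1)$ constant (there is no smallness --- $\|u\|_{C^\gamma}$ is merely bounded), your closing step ``multiply by $2^{js}$, take $\sup_j$ and $\sup_{0\le t\le T}$'' yields $X(T)\le C_0+C\,X(T)$ with $C$ not small, which does not close; one needs an additional mechanism (e.g.\ subdividing $[0,T]$ into intervals of length $t_0$ small in terms of $\sup_t\|G\|_{L^q}$, so that $2^{j(1-\gamma)}\min(t_0,2^{-j\al})$ is uniformly small, then iterating), and none is indicated.

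The second gap is that you dispose of $[R_\al,u\cdot\nabla]\theta$ in one sentence. In the whole line of works \cite{HKR1,JMWZ,SW,WXXY}, the estimation of precisely this commutator --- with $u$ reconstructed from $G$ and $\theta$ --- is the technical heart, and it is what actually generates the window $\f{2}{2\al-1}<r\le \f{2q}{2-(1-\al)q}$; asserting that the window ``is precisely the Bernstein bookkeeping'' names the answer without deriving it. Note also that your premise $u\in W^{1,q}$ is not available: only the $G$-part of $u$ satisfies $\nabla u_G\in L^q$ (Calder\'on--Zygmund), while the $\theta$-part behaves like $\La^{-\al}\theta$, so $\nabla u_\theta\sim\La^{1-\al}\theta$ is not controlled by \ref{fazel2} (this is fixable, since $u_\theta\in C^{\al-}$ and $\al>1-\al$, but the claim as written is wrong). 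Finally, the low-frequency blocks $j<0$ (where there is no dissipative gain, and where, in the portion $r<q$ of the stated range, Bernstein runs the wrong way so $\|\De_j G\|_{L^r}$ is not controlled by $\|G\|_{L^q}$) are not addressed anywhere in the argument.
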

Let us mention that the equation $G$ displayed in \ref{1014} corresponds to  the change of variables used in previous works (dubbed first generation Hmidi-Keraani-Rousset). On the other hand, we would like to apply Proposition \ref{prop:WXXY} to the solution $f$ of \ref{W1}. Note however that the terms in \ref{W1} are either the same or more regular than the corresponding terms\footnote{Thanks to Prof. Ye for pointing this out to us in a private communication} in \ref{1015}. Thus, we can apply Proposition \ref{prop:WXXY} to $f$. Using this result, we can reduce matters to verifying 
 \begin{equation}
 \label{1015}
 \sup_{0\leq t\leq T} \|f(t, \cdot)\|_{L^6(\rtwo)}<\infty.
 \end{equation}
 Indeed, assuming that we have established the bound \ref{1015}, we apply Proposition \ref{prop:WXXY} with $q=6$ (which is exactly in the range 
 $(\f{2}{\al}, \f{2}{1-\al})$). We obtain the following bound for $f$ 
 $$
 \sup_{0\leq t\leq T} \|f\|_{B^{3\al-2}_{\f{6}{3\al-2}, \infty}}<\infty.
 $$
 But then, by elementary Sobolev embedding, we have for every small $\de>0$, 
 $$
 \|\nabla u_f\|_{L^\infty_x} \leq C_\de \|f\|_{W^{\f{3\al-2-\de}{3}, \f{6}{3\al-2}}}\leq C_\de \|f\|_{B^{3\al-2}_{\f{6}{3\al-2}, \infty}}
 $$
 which would have verified the bound \ref{op}. 
 Thus, it remains to verify \ref{1015}. 
  
 {\bf Remark:} Originally, our proof proceeded  via a Sobolev embedding control of the form 
 $\|\nabla u_f\|_{L^\infty_x(\rtwo)}\leq C(\|\La^{\de} \nabla f\|_{L^2(\rtwo)}+\|f\|_{L^2})$ and then controlling this last Sobolev norm. We gratefully acknowledge Professor Ye's contribution, which lead us to this much shorter argument.

 \subsection{Strategy of the proof and the organization of the paper}
As we have alluded to before, the strategy is to  follow the standard approach for such models - namely one starts with a local solution\footnote{which is immediately smooth for any time $t>0$}. Such solution may of course be defined for short time only and it may blow up at some finite time $T_0<\infty$. We henceforth do not worry about the existence and the regularity of the solution up to time $T_0$, but we need good {\it a priori} estimates. More precisely, in the discussion leading to  \ref{1015}, we explained that blow up is possible, only if $\limsup_{t\to T_0-} \|  f\|_{L^\infty(0,t)L^6_x(\rtwo)}=\infty$. Thus, a contradiction will be reached (whence $T_0=\infty$ and the solution is global), if one can provide {\it a priori} bound in the form $\sup_{0\leq t\leq T_0}\|\ f\|_{L^\infty(0,t)L^6_x(\rtwo)}=M_0<\infty$. In practice, we construct  $M=M(T; \|\theta_0\|_{L^1\cap H^{2+\rho}(\rtwo)}, \|u_0\|_{H^{1+\rho}(\rtwo)})$ a continuous function in all arguments, so that 
$\sup_{0\leq t\leq T}\| f\|_{L^\infty(0,t)L^6_x(\rtwo)}\leq M(T)$. 

Starting with the obvious {\it a priori} bounds \ref{fazel2}, we gradually improve it to finally obtain \ref{1015}. More precisely, in Section \ref{sec:3}, we first establish an  $L^2$ bound for $f$ (see Proposition \ref{fazel61}), together with some   Sobolev bounds 
for $\theta$. Next, using the $L^2$ bounds from Proposition \ref{fazel61}, we bootstrap Proposition \ref{propo:3}, in order to establish  $L^4$ bounds for $f$, 
together with uniform in time Sobolev bounds for $f, \theta$ and some $L^2$ averaged in time Sobolev bounds. These are all (considerably) better than the one in Proposition \ref{fazel61}. We finish Section \ref{sec:3} by bootstrapping Proposition \ref{propo:3} yet again to establish $L^6$ bounds for $f$, together with even better uniform and $L^2$ time averaged Sobolev bounds for $f, \theta$. The uniform in time Sobolev bounds required for the global regularity 
in \ref{1015} do not come cheaply and by themselves - instead one seems to need to cook up energy functionals involving $L^p$ ($p$ larger) norms of $f$. In other words, for low $\al$ one faces not only the usual derivative difficulties as in previous works, but also integrability issues for $f$. 
Having Proposition \ref{propo:4} is enough, by the discussion in Section \ref{sec:1.3} below to conclude the global regularity claimed in Theorem \ref{theo:1}.

\section{Preliminaries}
\label{sec:2}
\noindent  For the proof, we need a number of technical tools, which we now introduce. We start with the $L^p$ spaces and Littlewood-Paley theory. 
\subsection{Function spaces} 
We use standard notation for $L^p$ spaces and Sobolev spaces, namely for $s>0, p\in [1,\infty)$, 
\begin{eqnarray*}
\|f\|_{L^p} &=& \left( \int |f(x)|^p dx\right)^{1/p} \\
\|f\|_{W^{s,p}} &=& \|\La^{s} f\|_{L^p}+ \|f\|_{L^p}
\end{eqnarray*}
We need to quickly introduce some elementary Littlewood-Paley theory. To that end, let $\Upsilon$ be an even and  smooth function on $\rone$, so that $supp \ \Upsilon\subset [-2,2]$, so that $\Upsilon(\xi)=1, |\xi|<1$. Define $\zeta:\rtwo\to \rone$ via $\zeta(\xi)=\Upsilon(|\xi|)-\Upsilon(2|\xi|)$, so that $\zeta\in C^\infty(\rtwo)$, with $supp \ \zeta\subset \{\xi: \f{1}{2}<|\xi|<2\}$. In addition, 
$$
\sum_{k=-\infty}^\infty \zeta(2^{-k} \xi)=1, \ \ \xi\neq 0.
$$
This allows us to define the  Littlewood-Paley operators  $\widehat{ \Delta_j  f}(\xi):=\zeta(2^{-j} \xi) \hat{f}(\xi)$,  restricting the Fourier transform of $f$ to the annulus $\{\xi:  |\xi|\sim 2^j \}$.   
We will often denote $f_k=\De_k f$, 
$f_{\sim k}=\sum_{j=k-10}^{k+10} \De_{j}f$ and $f_{<k}=\sum_{j<k} \De_j f$. 

\subsection{Commutator estimates}
\label{sec:2.2}
In this section, we present some commutator estimates, which will be useful in our arguments. Some of them, Lemma \ref{le:com1} and Lemma \ref{le:com2} appear to be new. We start with a lemma developed in \cite{SW} (see Lemma 2.5 there and Corollary 2.6).  
\begin{lemma}
\label{le:com} 
Let $\nabla \cdot g= 0$, $0 < S <1$ and $1 < p_2 < \infty$, $1 < p_1, p_3 \leq \infty$, so that $\frac{1}{p_1}+ \frac{1}{p_2}+ \frac{1}{p_3} = 1$. For every $0 \leq S_1, S_2, S_3 \leq 1$ that satisfy $S_1+ S_2+ S_3 > 1+S $, there exists a $C= C (p_1, p_2, p_3, S_1, S_2)$, so that
\begin{equation}\label{aaa}
| \int_{R^d} h [\La^S, g \cdot  \nabla] \psi dx| \leq C \|\Lambda^{S_1} \psi\|_{L^{p_1}} \|\La^{S_2} h\|_{L^{p_2}} \|\Lambda^{S_3} g\|_{p_3}.
\end{equation}
In particular if $p_3 < \infty$ then
\begin{itemize}
\item  for $S_1= s_1$, $S_2= s_2$ and $S_3= 1$ where $s_1+ s_2 > 1- \alpha$
\begin{equation}{\label{fazel5}}
| \int_{R^d} h [R_{\alpha}, V \cdot \nabla] \psi dx| \leq C \|\Lambda^{s_1} \psi\|_{L^{p_1}} \|\La^{s_2} h\|_{L^{p_2}} \|\nabla V\|_{L^{p_3}},
\end{equation}
 \item  
 similarly, for every $0\leq s_2, s_3<1$, so that  $s_2+ s_3 > 1+S$ we have 
\begin{equation}{\label{fazel6}}
| \int_{R^d} h [\La^S, V \cdot  \nabla] \psi dx| \leq C \| \psi\|_{L^{p_1}} \|\La^{s_2} h\|_{L^{p_2}} 
\|\Lambda^{s_3} V\|_{L^{p_3}}.
\end{equation}
\end{itemize}
Note that in all statements, one could have replaced $R_\al=\p_1 \La^{-\al}$ by any multiplier, which acts as differentiation of order $1-\al$, for example $\La^{1-\al}$. 
\end{lemma}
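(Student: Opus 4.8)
My plan is to treat estimate \ref{aaa} as the only substantive claim, since the two bullet points are immediate specializations. For \ref{fazel5} I would set $S=1-\alpha$ (legitimate because $R_\alpha=\partial_1\Lambda^{-\alpha}$ differentiates to order $1-\alpha$, as remarked at the end of the statement), $S_1=s_1$, $S_2=s_2$, $S_3=1$; then $S_1+S_2+S_3>1+S$ reads $s_1+s_2>1-\alpha$ and $\|\Lambda^{S_3}g\|_{p_3}$ becomes $\|\nabla V\|_{L^{p_3}}$. For \ref{fazel6} I would set $S_1=0$, $S_2=s_2$, $S_3=s_3$, so the hypothesis becomes $s_2+s_3>1+S$. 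Thus all effort goes into \ref{aaa}.

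To prove \ref{aaa} I would pair the commutator against $h$ and expand everything in Littlewood--Paley pieces, applying Bony's paraproduct to $g\cdot\nabla\psi$. Writing $[\Lambda^S,g\cdot\nabla]\psi=\sum_{j,k}[\Lambda^S,\Delta_j g\cdot\nabla]\Delta_k\psi$, I would split into the three standard regimes: the low--high part $j\ll k$, the high--low part $k\ll j$, and the resonant high--high part $j\sim k$. In the low--high regime the commutator is decisive: for $g$ localized well below frequency $2^k$, a first--order Taylor/kernel expansion of the symbol of $\Lambda^S$ gives $\|[\Lambda^S,\Delta_{<k-3}g\cdot\nabla]\Delta_k\psi\|\leq C\,2^{kS}\|\nabla\Delta_{<k-3}g\|\,\|\Delta_k\psi\|$, i.e. the apparent order $S+1$ drops to order $S$, the saved derivative landing on $g$. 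In the high--low regime there is no cancellation, but the derivative $\nabla$ falls on the low-frequency factor $\psi$ and is therefore harmless, so both terms of the commutator are bounded directly.

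The crux, and where I expect the main obstacle, is the resonant high--high interaction: here $\Delta_j g$ and $\nabla\Delta_j\psi$ have comparable large frequencies while the output lands at low frequency, so $\Lambda^S$ of the product cannot absorb the full derivative coming from $\nabla$, and the naive bound loses a derivative with no commutator gain. The remedy is exactly the hypothesis $\nabla\cdot g=0$: it lets one write the transport term as a divergence, and, after integrating against $h$, transfer the offending derivative onto the \emph{low-frequency} factor $h$, namely $\int h\,\Lambda^S(g\cdot\nabla\psi)\,dx=\int h\,\Lambda^S\nabla\cdot(g\psi)\,dx=-\int \nabla\Lambda^S h\cdot g\,\psi\,dx$, where it costs only the $S_2$ derivatives already budgeted for $h$.

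Finally, on each dyadic block I would apply H\"older with exponents $(p_1,p_2,p_3)$, distributing $\Lambda^{S_1}$ onto $\psi$, $\Lambda^{S_2}$ onto $h$, and $\Lambda^{S_3}$ onto $g$ via Bernstein, and then sum the resulting geometric series. In every regime the exponent of the dyadic parameter works out to $(1+S)-(S_1+S_2+S_3)$, so the sums converge precisely because the hypothesis $S_1+S_2+S_3>1+S$ is \emph{strict} (at the endpoint the sum diverges logarithmically); the two side constraints $0<S<1$ (so the symbol of $\Lambda^S$ is smooth and of order below one, making the Taylor expansion in the low--high regime legitimate) and $1<p_2<\infty$ (so the middle factor is controlled through a genuine $L^{p_2}$ estimate via boundedness of the Littlewood--Paley square function) are what keep the H\"older/summation step clean. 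Adding the three regimes yields \ref{aaa}.
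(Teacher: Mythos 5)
You should first know that the paper never proves Lemma \ref{le:com}: it is imported from \cite{SW} (Lemma 2.5 and Corollary 2.6 there), and the only commutator proofs supplied here are those of Lemmas \ref{le:com1} and \ref{le:com2} in the Appendix. Your framework --- Bony trichotomy, a kernel/Taylor expansion producing the one-derivative gain on $g$ in the low--high regime, direct estimates in the high--low regime, and the use of $\nabla\cdot g=0$ to shift the derivative onto the low-frequency factor in the resonant regime --- is the same family of arguments that Appendix uses, and your reductions of \ref{fazel5} and \ref{fazel6} to \ref{aaa} are correct.

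The genuine gap is in your summation step, and it is not a technicality. You assert that in every regime the dyadic exponent is $(1+S)-(S_1+S_2+S_3)$, so that strictness of $S_1+S_2+S_3>1+S$ makes all sums converge. That is true only for the high--high piece, where one sums over frequencies $l$ \emph{above} the output frequency. For the diagonal pieces (low--high and high--low), after converting $\|\psi_k\|_{L^{p_1}}$, $\|h_{\sim k}\|_{L^{p_2}}$ and $\|\nabla g_{<k}\|_{L^{p_3}}$ into the prescribed homogeneous norms, the factor $2^{k[(1+S)-(S_1+S_2+S_3)]}$ carries a \emph{negative} exponent and must be summed over all output frequencies $k\in\zz$: it converges as $k\to+\infty$ but diverges as $k\to-\infty$. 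Strictness helps you at high frequency and hurts you at low frequency, which is the opposite of the picture in your sketch (at exact balance the diagonal terms are not log-divergent; they are summed losslessly by Cauchy--Schwarz, square functions and Fefferman--Stein, exactly as the Appendix does for Lemma \ref{le:com1}). Moreover, no rearrangement can close the homogeneous-norm statement as written: rescaling $(\psi,h,g)\mapsto(\psi(\la\cdot),h(\la\cdot),g(\la\cdot))$ multiplies the left side of \ref{aaa} by $\la^{1+S-d}$ and the right side by $\la^{S_1+S_2+S_3-d}$, so a uniform constant with $S_1+S_2+S_3>1+S$ would force the left side to vanish identically upon letting $\la\to 0$. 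To make your argument correct you must either (i) prove the balanced, scale-invariant version in which the derivative weights sum to exactly $1+S$, reserving the strict inequality for the high--high sum alone, or (ii) read the right-hand side in inhomogeneous norms $\|\cdot\|_{W^{S_i,p_i}}$, in which case the low frequencies $k\leq 0$ are summed using only the factor $2^{kS}$, $S>0$, with no derivative conversion at all, and the slack is spent only on $k\geq 0$; this inhomogeneous reading is the one consistent with how the lemma is applied in the body of the paper. As written, your proof establishes neither.
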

Note that in this lemma, one has to always allow for small derivative loss. Lemma \ref{le:com} will be adequate for many terms, except when we need to account for all derivatives. In other words, we need a variant which is lossless in the derivative count (and/or endpoint estimates).   We have two versions - 
 Lemma \ref{le:com1} is  for estimates in (homogeneous) Sobolev spaces of negative index, and the other one, Lemma \ref{le:com2}, for estimates in (homogeneous) Sobolev spaces of positive index.  We mostly need Lemma \ref{le:com1} throughout the paper, the need for Lemma \ref{le:com2} arises at the very end of our argument. Interestingly, in the proof (presented in the Appendix), we do not distinguish much between the two cases. Note that the results in Lemmas \ref{le:com1} and Lemma 
 \ref{le:com2} hold under somewhat more general assumptions that the one that we displayed below, but we prefer to keep it simple and convenient for the applications. 
\begin{lemma}
\label{le:com1}
Let $s_1, s_2$ be two reals so that $0\leq s_1$ and  $0\leq s_2-s_1\leq 1$. Let  $p,q,r$ be related via the H\"older's  $\f{1}{p}=\f{1}{q}+\f{1}{r}$, where 
$2<q<\infty$, $1<p,r<\infty$. Finally, let $\nabla \cdot V=0$.   

Then for any $a\in [s_2-s_1, 1]$
\begin{eqnarray}
\label{20}
& & \|\La^{-s_1}[\La^{s_2}, V \cdot \nabla] \vp]\|_{L^p}\leq C \|\La^a V\|_{L^q} \|\La^{s_2-s_1+1-a}\vp\|_{L^r}
\end{eqnarray}
In addition, we have the following end-point estimate. For $s_1>0, s_2>0, s_3>0$ and $s_1<1, s_3<1, s_2<s_1+s_3$, there is\footnote{Note that in the statement of \ref{25}, one does not necessarily need precisely the form $\La^{-s_3} V$. In fact, the estimate applies for any Fourier multiplier $Q$, with the property that 
$\| Q V_{k}\|_{L^\infty}\sim 2^{-k s_3} \|V_k\|_{L^\infty}$}
\begin{equation}
\label{25}
\|\La^{-s_1}[\La^{s_2}, \La^{-s_3} V\cdot \nabla]\vp\|_{L^2}\leq C \|V\|_{L^\infty} \|\La^{s_2-s_1+1-s_3}\vp\|_{L^2}. 
\end{equation}
\end{lemma}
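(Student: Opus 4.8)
The plan is to prove both estimates by Littlewood--Paley decomposition of the commutator, splitting the product $V\cdot\nabla\vp$ via Bony paraproducts into three pieces --- low-frequency $V$ times high-frequency $\vp$, high-frequency $V$ times low-frequency $\vp$, and the resonant (high-high) term --- and estimating the commutator frequency block by frequency block. For a fixed output frequency $2^k$, the commutator structure $[\La^{s_2}, V\cdot\nabla]$ is what saves a derivative: when $V$ is at frequency $\ll 2^k$, the operator $\La^{s_2}$ essentially commutes with $V\cdot\nabla$ up to the commutator with the (slowly varying) symbol, so one gains a factor comparable to $2^{k(s_2-1)}\,\nabla V$ rather than $2^{ks_2}\,V$. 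First I would write $\La^{-s_1}[\La^{s_2},V\cdot\nabla]\vp=\sum_k \Delta_k\big(\La^{-s_1}[\La^{s_2},V\cdot\nabla]\vp\big)$ and for each $k$ decompose according to whether the frequency of $V$ is below, comparable to, or above $2^k$.

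For the main estimate \ref{20}, the key step in each regime is a symbol/mean-value argument. In the paraproduct piece where $V_{<k-10}$ multiplies $\vp_{\sim k}$, the commutator $[\La^{s_2},V_{<k}\cdot\nabla]$ acting on $\vp_{\sim k}$ can be written as a convolution against a kernel whose $L^1$ norm is controlled by $2^{k(s_2-1)}\|\nabla V_{<k}\|_{L^\infty}$-type quantities; distributing the available smoothing between $V$ and $\vp$ according to the parameter $a\in[s_2-s_1,1]$ yields the bound $2^{k(s_2-s_1)}\cdot 2^{-ka}\cdot 2^{-k(s_2-s_1+1-a)}$ after one pairs $\La^a$ onto $V$ and the remaining $\La^{s_2-s_1+1-a}$ onto $\vp$. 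I would then sum in $k$ using H\"older with $\f{1}{p}=\f{1}{q}+\f{1}{r}$ and the almost-orthogonality of the Littlewood--Paley pieces, invoking that $2<q<\infty$ so that the square-function and maximal-function estimates for $L^q$ apply; the constraint $0\le s_2-s_1\le 1$ is exactly what makes the $k$-sum of the geometric factors converge (the exponent $a$ is nonnegative and $s_2-s_1+1-a$ is nonnegative). The high-high resonant term is handled more crudely since there is no commutator gain there, but the divergence-free condition $\nabla\cdot V=0$ lets one move the derivative off $\vp$ and onto $V$ inside the sum, recovering summability.

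For the endpoint estimate \ref{25}, since the target space is $L^2$ I would instead work by duality/Plancherel and exploit that the extra factor $\La^{-s_3}$ on $V$ converts the $\|V\|_{L^\infty}$ norm into the right scaling: the hypothesis (recorded in the footnote) that $\|\La^{-s_3}V_k\|_{L^\infty}\sim 2^{-ks_3}\|V_k\|_{L^\infty}$ is precisely what is needed so that each frequency block of $\La^{-s_3}V$ contributes $2^{-ks_3}\|V\|_{L^\infty}$. The conditions $s_1,s_3<1$ and $s_2<s_1+s_3$ guarantee that the resulting exponents $2^{k(s_2-s_1-s_3)}$ are summable in the low-frequency-$V$ regime while the complementary high-frequency regimes converge because $s_1>0$ and $s_3>0$. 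The main obstacle I anticipate is the bookkeeping in the resonant high-high interaction and at the endpoints $a=s_2-s_1$ and $a=1$, where one of the geometric factors becomes scale-invariant and the naive term-by-term summation fails; there one must use the $\ell^\infty$-in-frequency structure of $\|\La^a V\|_{L^q}$ together with the divergence-free cancellation, rather than absolute summation, to close the estimate. I expect the two cases (negative index \ref{20} and the $L^2$ endpoint \ref{25}) to be treatable in a unified way, as the authors remark, because in both the decisive mechanism is the one-derivative gain from the commutator combined with the flexibility in redistributing smoothing between $V$ and $\vp$.
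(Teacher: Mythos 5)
Your proposal follows essentially the same route as the paper's own proof: a Littlewood--Paley/Bony paraproduct splitting into low-high, high-low and high-high pieces, the kernel/mean-value representation of the commutator giving the one-derivative gain in the low-high piece (which the paper makes rigorous via pointwise maximal-function bounds and the Fefferman--Stein vector-valued inequality, using $q>2$), the divergence-free condition to move the gradient to the low output frequency in the resonant piece, and frequency-wise $L^\infty$ bounds with the exponent conditions $s_2<s_1+s_3$, $s_1<1$ for the endpoint estimate \ref{25}. The only substantive difference is organizational: the paper handles intermediate $a$ by proving the two endpoint cases $a=1$ and $a=s_2-s_1$ directly and then interpolating, so your anticipated obstruction at those endpoints does not actually arise.
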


We have the following useful corollary of \ref{20}. 
\begin{corollary}
\label{cor:lk}
Let $p_1, p_2, p_3: \f{1}{p_1}+\f{1}{p_2}+\f{1}{p_3}=1$ and $p_1>2$. Assume that $0\leq s\leq 1$. Then,  
\begin{eqnarray}
\label{f:10} 
& & | \dpr{[\La^{s}, V \cdot \nabla] \vp]}{\psi}| \leq C \|\nabla V\|_{L^{p_1}} \|\La^{s}\vp\|_{L^{p_2}} \| \psi\|_{L^{p_3}}\\
\label{f:20}
& & |\dpr{[\La^{s}, V \cdot \nabla] \vp]}{\psi}| \leq C 
\|\La^a V\|_{L^{p_1}} \|\La^{s+1-a}\vp\|_{L^{p_2}} \| \psi\|_{L^{p_3}}
\end{eqnarray}
whenever $a\in [s,1]$. 
\end{corollary}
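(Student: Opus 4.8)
The plan is to obtain both \ref{f:10} and \ref{f:20} as immediate consequences of the homogeneous estimate \ref{20} in Lemma \ref{le:com1}, using only H\"older duality and (for the first estimate) the boundedness of the Riesz transforms. The key observation is that the left-hand side of \ref{20} is measured in $L^p$, whereas the quantity in the corollary is the dual pairing $\dpr{[\La^{s}, V\cdot\nabla]\vp}{\psi}$; the two are linked by choosing the negative-regularity parameter $s_1$ in \ref{20} to equal zero, so that no derivative lands on the test function $\psi$.

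Concretely, I would first write, by $L^{p}$--$L^{p'}$ duality,
$$
|\dpr{[\La^{s}, V\cdot\nabla]\vp}{\psi}|\leq \|[\La^{s}, V\cdot\nabla]\vp\|_{L^{p_3'}}\,\|\psi\|_{L^{p_3}},
$$
which is legitimate since $\f{1}{p_3'}+\f{1}{p_3}=1$. It then remains to bound the commutator in $L^{p_3'}$, and for this I apply \ref{20} with the choices $s_1=0$, $s_2=s$, $q=p_1$, $r=p_2$ and $p=p_3'$. One checks that every hypothesis of Lemma \ref{le:com1} is met: $s_1=0\geq 0$ and $s_2-s_1=s\in[0,1]$ by assumption; $2<q=p_1<\infty$ as required; and, crucially, the H\"older balance $\f{1}{p}=\f{1}{q}+\f{1}{r}$ reduces to $\f{1}{p_3'}=\f{1}{p_1}+\f{1}{p_2}$, which is precisely the constraint $\f{1}{p_1}+\f{1}{p_2}+\f{1}{p_3}=1$. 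Since $s_2-s_1=s$, the admissible range $a\in[s_2-s_1,1]$ becomes $a\in[s,1]$, matching the corollary. Estimate \ref{20} then gives $\|[\La^{s}, V\cdot\nabla]\vp\|_{L^{p_3'}}\leq C\|\La^a V\|_{L^{p_1}}\|\La^{s+1-a}\vp\|_{L^{p_2}}$, and combined with the display above this is exactly \ref{f:20}.

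Finally, I would deduce \ref{f:10} as the endpoint case $a=1$ of \ref{f:20}, which is permissible because $a=1\in[s,1]$ whenever $s\leq 1$; with $a=1$ the exponent $s+1-a$ collapses to $s$, producing the bound $C\|\La V\|_{L^{p_1}}\|\La^{s}\vp\|_{L^{p_2}}\|\psi\|_{L^{p_3}}$. The only remaining point is to replace $\|\La V\|_{L^{p_1}}$ by $\|\nabla V\|_{L^{p_1}}$, which follows from the identity $\La=\sum_j R_j\partial_j$, with $R_j$ the scalar Riesz transforms, together with their $L^{p_1}$-boundedness for $1<p_1<\infty$ (valid here since $p_1>2$). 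I do not expect any genuine obstacle: the entire content is an index-bookkeeping verification that the parameters $(s_1,s_2,p,q,r,a)$ selected above land inside the hypotheses of \ref{20}, the Riesz comparison merely supplying the cosmetic passage from $\La$ to $\nabla$ in \ref{f:10}.
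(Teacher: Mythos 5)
Your proposal is correct and follows essentially the same route the paper intends: Corollary \ref{cor:lk} is stated there as an immediate consequence of \ref{20}, and your choices $s_1=0$, $s_2=s$, $p=p_3'$, $q=p_1$, $r=p_2$ together with H\"older duality in $\psi$ reproduce exactly that deduction, with the admissible range $a\in[s_2-s_1,1]=[s,1]$ matching the corollary. The only additional ingredient, the passage from $\|\La V\|_{L^{p_1}}$ to $\|\nabla V\|_{L^{p_1}}$ in \ref{f:10} via $\La=\sum_j R_j\p_j$ and $L^{p_1}$-boundedness of the Riesz transforms, is handled correctly.
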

  The next lemma is basically identical to Lemma \ref{le:com1}, except that $s_1$ has the opposite sign. 
 \begin{lemma}
\label{le:com2}
Let $0 \leq s_1$, $0<s_2$, $0 \leq s_1+ s_2< 1$, $s_2+ s_1 < a \leq 1$,  $2<q<\infty, 1<r<\infty$ and $\f{1}{p}= \f{1}{q}+ \f{1}{r}$. Then
\begin{eqnarray}
\label{201}
& & \|\La^{s_1}[\La^{s_2}, V \cdot \nabla] \vp]\|_{L^p}\leq C \|\La^a V\|_{L^q} \|\La^{1+ s_2+ s_1- a}\vp\|_{L^r}
\end{eqnarray}
\end{lemma}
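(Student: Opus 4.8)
The plan is to prove Lemma \ref{le:com2}, the positive-index analogue of \ref{20}, by paraproduct decomposition in Littlewood-Paley blocks, closely paralleling the proof of Lemma \ref{le:com1}. I first expand the commutator $[\La^{s_2}, V\cdot\nabla]\vp$ in frequency, writing $V=\sum_k V_k$, $\vp=\sum_j \vp_j$, and organizing the interaction of $V$ and $\nabla\vp$ into the three standard regions: \emph{low-high} (the frequency of $V$ much smaller than that of $\vp$), \emph{high-low} (the frequency of $V$ dominant), and \emph{high-high} (comparable frequencies). The whole point of a commutator estimate is that the \emph{high-low} diagonal term, where one would naively lose a full derivative $\La^{s_2}$ landing on the high-frequency $V$, cancels to leading order: the symbol difference $\La^{s_2}(V_{<k}\cdot\nabla\vp_k)-V_{<k}\cdot\nabla\La^{s_2}\vp_k$ behaves like a kernel of order $s_2-1$ acting on $\nabla\vp_k$ together with one derivative on $V_{<k}$, so the effective order dropped onto $V$ is at most $1$ rather than $1+s_2$. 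This is the familiar Coifman-Meyer/Bony cancellation, and it is what lets the derivative count on the right-hand side of \ref{201} come out to $a$ on $V$ and $1+s_2+s_1-a$ on $\vp$.

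After the decomposition, I would estimate each of the three pieces after applying the outer operator $\La^{s_1}$ and taking the $L^p$ norm via \emph{Hölder} with $\f{1}{p}=\f{1}{q}+\f{1}{r}$. For the low-high term I distribute $\La^{s_1}$ and $\La^{s_2}$ onto the high-frequency factor $\vp_k$ (legitimate since that factor carries the dominant frequency), placing $\La^{1+s_2+s_1-a}\vp$ in $L^r$ and $\La^a V$ in $L^q$; the frequency-localized Bernstein inequalities convert the mismatched powers of $2^k$ into convergent geometric series precisely because $s_2+s_1<1$ keeps the high-frequency summation summable after the derivative count is balanced, and because $a>s_1+s_2$ leaves a positive spectral gap on the $V$ side. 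The high-low term uses the commutator cancellation described above so that, again after placing $\La^a V\in L^q$ and the remaining derivatives on $\vp\in L^r$, the $2^k$-powers sum; here the hypothesis $s_2+s_1<a\le 1$ is exactly what makes the geometric series in the high $V$-frequency converge. The high-high term is handled by summing over comparable frequencies and using $\nabla\cdot V=0$ to integrate by parts (or equivalently to replace $V\cdot\nabla$ by $\nabla\cdot(V\,\cdot)$), which gains the extra decay needed for the off-diagonal sum to converge, again controlled by the constraint $s_1+s_2<1$.

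The main obstacle I anticipate is bookkeeping the derivative distribution in the high-low piece so that the gain from the commutator cancellation is captured cleanly rather than wastefully. One must show that after the cancellation the residual operator acting between $V_{<k}$ and $\vp_k$ is genuinely of order $s_2-1$ in the high frequency and order $1$ in the low frequency of $V$; a clean way to see this is to write the localized commutator symbol via the mean-value/Taylor expansion of $|\xi+\eta|^{s_2}-|\xi|^{s_2}$ in the regime $|\eta|\ll|\xi|$, showing it equals $s_2|\xi|^{s_2-2}\xi\cdot\eta+O(|\xi|^{s_2-2}|\eta|^2)$, so that the leading term is a bounded Fourier multiplier (a Coifman-Meyer symbol) times one derivative on each factor. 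Since the excerpt remarks that the proof does not distinguish much between the negative-index case \ref{20} and this positive-index case, I expect the cleanest route is to run the \emph{identical} paraproduct argument as for Lemma \ref{le:com1}, merely tracking that the outer weight $\La^{s_1}$ now appears with the opposite sign; the constraints $0\le s_1$, $0<s_2$, $s_1+s_2<1$, and $s_1+s_2<a\le 1$ are tailored so that every one of the three geometric series converges and no boundary (endpoint) summability issue arises, which is why—unlike \ref{25}—no separate endpoint statement is needed here.
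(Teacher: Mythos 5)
Your overall strategy---paraproduct decomposition, a commutator cancellation captured by a mean-value/Taylor expansion, then H\"older and Bernstein---is the same as the paper's (the paper proves \ref{20} and \ref{201} by a single argument in the Appendix, allowing the outer index $\pm s_1$ to take either sign). However, you have attached the key cancellation to the wrong interaction term, and the term that actually needs it is treated in a way that fails. The cancellation you write down, $\La^{s_2}(V_{<k}\cdot\nabla\vp_{k})-V_{<k}\cdot\nabla\La^{s_2}\vp_{k}$, with the expansion of $|\xi+\eta|^{s_2}-|\xi|^{s_2}$ in the regime $|\eta|\ll|\xi|$, lives in the \emph{low-high} regime ($V$ at low frequency, $\vp$ at high frequency $2^k$); yet in your execution you propose to handle precisely this low-high term with \emph{no} cancellation, ``distributing $\La^{s_1}$ and $\La^{s_2}$ onto $\vp_k$'' and placing $\La^a V$ in $L^q$. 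That step does not close: for each $V$-frequency $l<k$ the term has size about $2^{k(s_1+s_2+1)}\|V_l\|_{L^q}\|\vp_{\sim k}\|_{L^r}$, and converting this to the claimed right-hand side costs a factor $2^{(k-l)a}$, which diverges when summed over $l<k$ because $a>0$. Equivalently, without cancellation the low-high piece can only be bounded by $C\,\|V\|_{L^q}\|\La^{1+s_1+s_2}\vp\|_{L^r}$ (zero derivatives on $V$), never by $\|\La^a V\|_{L^q}\|\La^{1+s_1+s_2-a}\vp\|_{L^r}$ with $a>0$; the hypothesis $a>s_1+s_2$ plays no role in this term.

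Conversely, the high-low term ($V$ carrying the dominant frequency) needs no cancellation at all: there the output frequency is $2^k$, so $\La^{s_1}\La^{s_2}$ costs $2^{k(s_1+s_2)}$, which is absorbed by $\|\La^a V_{\sim k}\|_{L^q}$ at the price $2^{k(s_1+s_2-a)}$, while $\|\nabla\vp_{<k}\|_{L^r}\leq C\, 2^{k(a-s_1-s_2)}\|\La^{1+s_1+s_2-a}\vp\|_{L^r}$ by a geometric sum that converges precisely because $a>s_1+s_2$. This is where that hypothesis is used, and it is exactly the paper's treatment (``the commutator structure does not play much role''). So the repair is to swap your assignments: run the cancellation on the \emph{low-high} term---the paper does it through the kernel representation $[\De_k^2,g\cdot\nabla]f(x)=2^{3k}\int \chi_3(2^k(x-y))\,[g(x)-g(y)]\,f(y)\,dy$ together with the mean value theorem, which produces $\nabla V_{<k}$, followed by maximal-function bounds and the Fefferman--Stein inequality---and use plain H\"older/Bernstein estimates on the high-low and high-high terms, with divergence-freeness of $V$ invoked in the high-high term to move $\nabla$ outside. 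With the assignments corrected (and the final interpolation to reach general $a<1$, as in the paper), your sketch coincides with the paper's proof.
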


The following corollary is a direct result of the above lemma
\begin{corollary}
Let $0 \leq s < \al$, $\be+ s < a \leq 1$, $2<q, r<\infty$ and $\f{1}{2}= \f{1}{q}+ \f{1}{r}$. Then
\begin{eqnarray}
\label{200}
& & \|\La^{s}[R_{\al}, V \cdot \nabla] \vp]\|_{L^2}\leq C \|\La^a V\|_{L^q} \|\La^{1+ \be+ s- a}\vp\|_{L^r}
\end{eqnarray}
\end{corollary}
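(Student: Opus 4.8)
The plan is to deduce this corollary directly from Lemma \ref{le:com2}, specializing the generic inner multiplier $\La^{s_2}$ of the commutator to the Riesz-type operator $R_\al = \p_1 \La^{-\al}$. The only conceptual point is the observation recorded in the note following Lemma \ref{le:com}: the symbol $i\xi_1 |\xi|^{-\al}$ of $R_\al$ is smooth and homogeneous of degree $1-\al$ away from the origin, so on each Littlewood-Paley block $R_\al$ acts exactly like $\La^{1-\al}$ up to a smooth bounded factor. Consequently the commutator estimate proved for $\La^{s_2}$ with $s_2 = 1-\al$ carries over with $\La^{1-\al}$ replaced by $R_\al$. Granting this, I would set $s_1 = s$, $s_2 = 1-\al$ and $p = 2$ in Lemma \ref{le:com2}.

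It then remains to check that the hypotheses of the corollary translate precisely into those of Lemma \ref{le:com2} under the critical relation $\be = 1-\al$. Indeed $0 \leq s_1 = s$ is immediate and $0 < s_2 = 1-\al$ holds because $\al < 1$. The constraint $s_1 + s_2 < 1$ reads $s + (1-\al) < 1$, i.e. $s < \al$, which is exactly the standing hypothesis $0 \leq s < \al$. The constraint $s_1 + s_2 < a$ becomes $s + (1-\al) < a$, i.e. $\be + s < a$ after using $1-\al = \be$, matching $\be + s < a \leq 1$. Finally $\f{1}{p} = \f{1}{q} + \f{1}{r}$ with $p=2$ is the stated $\f{1}{2} = \f{1}{q} + \f{1}{r}$, and $2 < q < \infty$, $2 < r < \infty$ (hence $1 < r < \infty$) are compatible with the lemma.

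With these identifications, Lemma \ref{le:com2} yields
\[
\|\La^{s}[R_\al, V\cdot \nabla]\vp\|_{L^2} \leq C \|\La^a V\|_{L^q} \|\La^{1 + s_2 + s_1 - a}\vp\|_{L^r},
\]
and the Sobolev exponent on the right is $1 + (1-\al) + s - a = 1 + \be + s - a$, once more by $1-\al = \be$. This is precisely the bound \ref{200}.

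The argument is essentially bookkeeping, and the exponent arithmetic closes cleanly; the one place where I would be most careful — and which I regard as the only genuine obstacle — is the justification that $R_\al$ may stand in for $\La^{1-\al}$ in Lemma \ref{le:com2}. One must confirm that the proof of that lemma (given in the Appendix) uses only the order and the smoothness/homogeneity of the inner symbol, never the specific fact that it is a pure power of $\La$. Since the paraproduct and cancellation estimates underpinning Lemma \ref{le:com2} localize the inner multiplier to frequency annuli on which $R_\al$ and $\La^{1-\al}$ are comparable up to smooth bounded multipliers, this substitution is harmless and the constant $C$ is unaffected up to an absolute factor.
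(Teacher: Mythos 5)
Your proposal is correct and follows exactly the paper's (essentially one-line) derivation: the paper states the corollary is ``a direct result'' of Lemma \ref{le:com2}, obtained by taking $s_1=s$, $s_2=1-\al=\be$, $p=2$, and invoking the note accompanying Lemma \ref{le:com} that $R_\al$ may replace any multiplier of order $1-\al$ such as $\La^{1-\al}$. Your exponent bookkeeping and your justification for substituting $R_\al$ (frequency-localized comparability with $\La^{1-\al}$) match the paper's intent, so there is nothing to correct.
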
  

Next, we need to prepare a   technical point, which will be useful in the sequel.  
\subsection{The scaled variables} 
For technical reasons, we use the following scaled variables 
\begin{eqnarray}{\label{fazel6'}}
\left\{
  \begin{array}{ll}
  \theta(t, x)= \Theta (\epsilon_0^{\beta} t, \epsilon_0 x);~~ u(t, x)= U (\epsilon_0^{\beta} t, \epsilon_0 x)\\
    f(t, x)= F (\epsilon_0^{\beta} t, \epsilon_0 x)~~~,~~ U= U_F+ U_{\Theta},
  \end{array}
\right.
\end{eqnarray}
where $\epsilon_0$ is a small parameter to be determined in each energy estimate later on separately. Clearly  
\[
\Theta_t+ \epsilon_0^{\alpha} U . \nabla \Theta+ \Lambda^{\beta} \Theta  = 0
\]
The corresponding equation for $F$ is 
\begin{eqnarray*}
\epsilon_0^{\beta} F_t+ \epsilon_0 U\cdot \nabla F+ \epsilon_0^{\alpha} \La^{\al} F &=&  \epsilon_0^{1+ 2(\beta- \alpha)} \Lambda^{2( \beta- \alpha)} \partial_1 \Theta+ \epsilon_0^{3 \beta} [\Lambda^{\beta- 2
 \alpha} \partial_1, U\cdot  \nabla] \Theta+ \\
 &+& \epsilon_0^{1+ \beta} [R_{\alpha}, U\cdot\nabla] \Theta
\end{eqnarray*}
Thus, our new system now is in the form of
\begin{equation}{\label{fazel6''}}
\left\{
  \begin{array}{ll}
F_t+ \epsilon_0^{\alpha} U. \nabla F+ \epsilon_0^{\alpha- \beta} \Lambda^{\alpha} F =  N(U, F, \Theta), \\
 \Theta_t+ \epsilon_0^{\alpha} U . \nabla \Theta+ \Lambda^{\beta} \Theta  =  0.
 \end{array}
\right.
\end{equation}
with $N(U, F, \Theta)=\epsilon_0^{2- 3 \alpha}\Lambda^{2( \beta- \alpha)} \partial_1 \Theta+ \epsilon_0  [R_{\alpha}, U. \nabla] \Theta 
+\epsilon_0^{2 \beta}  [\Lambda^{\beta- 2
 \alpha} \partial_1, U. \nabla] \Theta$. 
Note that in this case
$\|\theta\|_{L^p}=\epsilon_0^{-2/p} \|\Theta\|_{L^p}$, in particular $\|\Theta\|_{L^{\infty}} = \|\theta\|_{L^{\infty}}$ and similar for $f,F$. 
\subsection{Some basic energy inequalities}  
Now suppose $\kappa, s \geq 0$ , and take $\La^s$ and $\La^{\kappa}$ derivatives,  and then dot product with $\La^s F$ and $\La^{\kappa} \Theta$ in \ref{fazel6''}, respectively, to get
\begin{eqnarray}\label{641}
\frac{1}{2} \partial_t \|\La^s F\|^2_{L^2} &+& \epsilon_0^{\alpha- \beta}\|\La^{s+\frac{\alpha}{2}} F\|^2_{L^2} \leq \epsilon_0^{\alpha} | \int (\La^s [U . \nabla F]) . \La^s F dx| \\ \nonumber
&+& \epsilon_0^{2- 3\alpha} |\langle \La^{2 (\beta- \alpha)+ s} \partial_1 \Theta, \La^s F\rangle| +
\epsilon_0 |\langle \La^s [R_{\alpha},  U . \nabla] \Theta, \La^s F\rangle|\\ \nonumber
&+&  \epsilon_0^{2 \beta} |\langle \La^s [\La^{\beta- 2 \alpha} \partial_1,  U . \nabla] \Theta, \La^s F\rangle| = I_1+ I_2+ I_3+ I_4
\end{eqnarray}
and,
\begin{equation}\label{65}
\frac{1}{2} \partial_t \|\La^{\kappa} \Theta\|^2_{L^2} + \|\La^{\kappa+\frac{\beta}{2}} \Theta\|^2_{L^2} \leq \epsilon_0^{\alpha} |\langle \La^{\kappa} (U \cdot \nabla \Theta), \La^{\kappa} \Theta\rangle|:= I_5.
\end{equation}
In the case that $s< 1$ or $\kappa < 1$ we can easily rewrite $I_1$ and $I_5$ in the commutator forms:
\[
I_1= \epsilon_0^{\alpha} |\langle [\La^s, U . \nabla] F, \La^s F\rangle|,~~~~~~~I_5= \epsilon_0^{\alpha} |\langle [\La^{\kappa}, U . \nabla] \Theta, \La^{\kappa} \Theta\rangle|.
\]
Now, take  dot product with $F |F|^{p-2}$ in \ref{fazel6''}, and get 
\begin{eqnarray*}
\frac{1}{p} \partial_t \|F \|^p_{L^p}&+& \epsilon_0^{\alpha- \beta} |\int F |F|^{p-2} \Lambda^{\alpha} F dx| \leq \epsilon_0^{2- 3\alpha} \int F |F|^{p-2} \Lambda^{2(\beta- \alpha)} \partial_1 \Theta dx\\
&+& \epsilon_0 |\langle  [R_{\alpha}, U. \nabla] \Theta, F |F|^{p-2}\rangle| + \epsilon_0^{2 \beta} |\langle  [\Lambda^{\beta- 2 \alpha} \partial_1, U. \nabla] \Theta, F |F|^{p-2} \rangle|\\
&:=& K_1+ K_2+ K_3.
\end{eqnarray*}
By maximum principle 
\begin{eqnarray*}
 \epsilon_0^{\alpha- \beta} |\int F |F|^{p-2}  \Lambda^{\alpha} F dx|  &\geq &  C_0 \epsilon_0^{2\alpha- 1} \int |\Lambda^{\frac{\alpha}{2}} (F^{\frac{p}{2}})|^2 dx \geq C_0 \epsilon_0^{2\alpha- 1} \|F^{\frac{p}{2}}\|^2_{L^{\frac{4}{2- \alpha}}}= \\
 &=& C_0 \epsilon_0^{2\alpha- 1} \|F\|^p_{L^{\frac{2 p}{2- \alpha}}}.
\end{eqnarray*} 
Therefore
\begin{eqnarray}\label{21}
\frac{1}{p} \partial_t \|F \|^p_{L^p}&+& C_0 \epsilon_0^{\al- \be} \|F\|^p_{\frac{2 p}{2- \alpha}} \leq \epsilon_0^{2- 3\alpha} \int F^3 \Lambda^{2(\beta- \alpha)} \partial_1 \Theta dx = K_1+ K_2+ K_3.
\end{eqnarray}
In our  proofs, we usually combine two or three relations of \ref{641}, \ref{65} and \ref{21}, with different $\kappa$, $s$, and $p$, and try to find the proper estimate for the right hand side, and then use the Gronwall's inequality to close the arguments. In our discussion, we shall ignore the estimates for $I_4$ and $K_3$, as they are easier to deal with than the corresponding terms $I_3$ and $K_2$.

\section{$L^p$ bounds on $f$}
\label{sec:3} 
In this section we prove  $L^2$, $L^4$ and $L^6$ bound for $f$. We start with $L^2$ bound and then proceed with $L^4$  bound and finally we get the $L^6$ bound. During the discussion we also raise the derivative on both $\theta$ and $f$. This allows us to jump to higher derivatives in the next sectin. \\
\subsection{$L^2$ Estimate}~
\begin{proposition}{\label{fazel61}}
Let $0 < \rho << 1$, $\gamma= \frac{\beta}{2}- 2 \rho$, $f_0 \in H^{\frac{\alpha}{2}}$ and $\theta_0 \in L^{\infty} \cap H^{\gamma}$ then
\begin{eqnarray}
\label{g:10}
  \|f\|_{L^2} + \|\Lambda^{\gamma} \theta\|_{L^2} &\leq & C_T\\
  \label{g:20}
  \int_0^T (\|\Lambda^{\frac{\alpha}{2}}f (., t)\|_{L^2}^2 &+& \|\Lambda^{\frac{\beta}{2}+ \gamma} \theta   (., t)\|_{L^2}^2 ) dt \leq  C_T
\end{eqnarray}
where $C_T= C(T, \|\theta_0\|_{L^{\infty}}, \|f\|_{H^{\frac{\alpha}{2}}}, \|\theta\|_{H^{\frac{\alpha}{2}}})$.
\end{proposition}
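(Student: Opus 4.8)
The plan is to run a single combined energy estimate for $F$ and $\Theta$ in the scaled variables \ref{fazel6''}, using the functional $\mathcal{E}(t)=\|F\|_{L^2}^2+\|\La^\ga\Theta\|_{L^2}^2$ and closing by Gronwall. First I would specialize the basic inequality \ref{641} to $s=0$ and \ref{65} to $\ka=\ga$ and add them. Since $\nabla\cdot U=0$, the transport term $I_1$ in \ref{641} vanishes identically; and since $\ga<\be/2<1$, the term $I_5$ in \ref{65} may be written as the commutator $\epsilon_0^{\al}\dpr{[\La^\ga,U\cdot\nabla]\Theta}{\La^\ga\Theta}$. What survives on the left are the two dissipation terms $\epsilon_0^{\al-\be}\|\La^{\al/2}F\|_{L^2}^2$ and $\|\La^{\ga+\be/2}\Theta\|_{L^2}^2$, which are the gains I must absorb the right-hand side into; on the right I am left with the forcing $I_2$, the commutator $I_3$ (ignoring $I_4$, as the paper permits), and $I_5$.

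For the forcing $I_2=\epsilon_0^{2-3\al}|\dpr{\La^{2(\be-\al)}\partial_1\Theta}{F}|$, the operator $\La^{2(\be-\al)}\partial_1$ has order $2(\be-\al)+1=3-4\al$. I would place the whole derivative on $\Theta$ and keep $F$ in $L^2$:
\[
I_2\le \epsilon_0^{2-3\al}\,\|\La^{3-4\al}\Theta\|_{L^2}\,\|F\|_{L^2}.
\]
The decisive point is that $\ga$ was chosen precisely so that $\ga+\be/2=\be-2\rho=(1-\al)-2\rho$, and for $\al>2/3$ with $\rho$ sufficiently small one has $3-4\al\le 1-\al-2\rho=\ga+\be/2$. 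Hence $\|\La^{3-4\al}\Theta\|_{L^2}$ is dominated by $\|\La^{\ga+\be/2}\Theta\|_{L^2}+\|\Theta\|_{L^2}$, and Young's inequality gives $I_2\le \de\|\La^{\ga+\be/2}\Theta\|_{L^2}^2+C_\de(\|F\|_{L^2}^2+\|\Theta\|_{L^2}^2)$, where $\|\Theta\|_{L^2}$ is already controlled by \ref{fazel2}. This is exactly the step where the hypothesis $\al>2/3$ is forced.

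Next I would treat the two commutators, splitting $U=U_F+U_\Theta$ according to \ref{po}, with $\nabla U_F\sim F$ and $\nabla U_\Theta\sim \La^{1-\al}\Theta+\La^{2-3\al}\Theta$. For $I_3=\epsilon_0|\dpr{[R_\al,U\cdot\nabla]\Theta}{F}|$ I apply Lemma \ref{le:com} (estimate \ref{fazel5}) with $s_2=\al/2$ and $s_1$ small, which is legitimate because $s_1+\al/2>1-\al$ requires only $s_1>1-\tfrac{3\al}{2}<0$ when $\al>2/3$; the dissipation-order derivative then falls on $F$. For the $U_F$ piece I use $\nabla U_F\sim F$ together with the Sobolev embedding $\dot H^{\al/2}(\rtwo)\hookrightarrow L^{4/(2-\al)}$ to convert the $F$-factors into $\|\La^{\al/2}F\|_{L^2}$, while the $U_\Theta$ piece is purely thermal and is absorbed by $\|\La^{\ga+\be/2}\Theta\|_{L^2}$ and the maximum-principle bound $\|\Theta\|_{L^\infty}=\|\theta\|_{L^\infty}\le \|\theta_0\|_{L^\infty}$ (this is where the $L^\infty$-dependence of $C_T$ enters). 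The commutator $I_5$ is handled identically via Corollary \ref{cor:lk} (estimate \ref{f:10}) with $p_1=4/(2-\al)>2$, so that $\|\nabla U_F\|_{L^{p_1}}\le C\|F\|_{L^{p_1}}\le C\|\La^{\al/2}F\|_{L^2}$ and the two factors $\La^\ga\Theta$ are interpolated between $\|\La^\ga\Theta\|_{L^2}$ and $\|\La^{\ga+\be/2}\Theta\|_{L^2}$ --- the familiar critical-SQG commutator structure.

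Finally, summing the three estimates, choosing $\de$ small and $\epsilon_0$ so that every contribution carrying a factor of a dissipative norm is absorbed into $\epsilon_0^{\al-\be}\|\La^{\al/2}F\|_{L^2}^2+\|\La^{\ga+\be/2}\Theta\|_{L^2}^2$ on the left, I obtain a differential inequality
\[
\f{d}{dt}\mathcal{E}(t)+c\left(\epsilon_0^{\al-\be}\|\La^{\al/2}F\|_{L^2}^2+\|\La^{\ga+\be/2}\Theta\|_{L^2}^2\right)\le g(t)\,\mathcal{E}(t)+h(t),
\]
with $g,h\in L^1(0,T)$ via \ref{fazel2}. Gronwall's inequality yields the uniform bound \ref{g:10}, and integrating the surviving dissipation in time gives \ref{g:20}. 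I expect the \textbf{main obstacle} to be the bookkeeping in $I_3$ and $I_5$: the self-interaction $\nabla U_F\sim F$ must be controlled entirely through the weak $\La^{\al/2}$-dissipation of $F$ across the borderline embedding $\dot H^{\al/2}(\rtwo)\hookrightarrow L^{4/(2-\al)}$, and one must simultaneously keep the residual derivative orders on $\Theta$ below $\ga+\be/2$; it is the confluence of this integrability constraint with the condition $3-4\al\le\ga+\be/2$ from $I_2$ that pins the admissible range to $\al>2/3$.
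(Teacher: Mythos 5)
Your skeleton coincides with the paper's own proof (scaled variables, the functional $\|F\|_{L^2}^2+\|\La^\ga\Theta\|_{L^2}^2$ with $s=0$, $\kappa=\ga$, the vanishing of $I_1$, the splitting $U=U_F+U_\Theta$, Gronwall at the end), and your treatment of $I_3$ is viable provided you take $s_1=0$ there (which the constraint $s_1>1-\tfrac{3\al}{2}$ permits), so that the temperature factor is $\|\Theta\|_{L^{4/\al}}$, a priori bounded by \ref{fazel2}; with $s_1>0$ that factor would not be controlled. The genuine gap is in $I_5^f=\epsilon_0^{\al}|\dpr{[\La^\ga,U_F\cdot\nabla]\Theta}{\La^\ga\Theta}|$. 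Using \ref{f:10} with $p_1=\f{4}{2-\al}$ spends a full dissipation power $\|\La^{\al/2}F\|_{L^2}$ on the velocity factor, and H\"older then forces $\f{1}{p_2}+\f{1}{p_3}=\f{2+\al}{4}$ on the two temperature factors. But interpolation of $\|\La^\ga\Theta\|_{L^{p_j}}$ between $\|\La^\ga\Theta\|_{L^2}$ and $\|\La^{\ga+\be/2}\Theta\|_{L^2}$ only reaches exponents with $\f{1}{p_j}\geq\f12-\f{\be}{4}$, hence $\f{1}{p_2}+\f{1}{p_3}\geq 1-\f{\be}{2}=\f{2+2\al}{4}>\f{2+\al}{4}$: the required exponents are out of range, so the step as written fails. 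If one repairs it by interpolating against $\|\Theta\|_{L^\infty}$ instead, a scaling count gives combined dissipation weight $a_2+a_3=\f{3\al/2+4\rho}{\al+2\rho}\approx\f32$ on $\Theta$, so the term becomes comparable to $\epsilon_0^{\al}\|\La^{\al/2}F\|_{L^2}\,\|\La^{\ga+\be/2}\Theta\|_{L^2}^{3/2}$, of total degree $\f52>2$ in dissipation norms; no Young splitting can absorb this into the quadratic left-hand side, and the leftover cubic term $\|\La^{\ga+\be/2}\Theta\|_{L^2}^{3}$ is neither a priori bounded nor of Gronwall type. The defect is structural: in $I_5$ the pairing partner is $\La^\ga\Theta$, not $F$, so the velocity factor must be kept at the Gronwall level $\|F\|_{L^2}$ rather than converted to dissipation (any partial conversion $\|F\|_{L^2}^{1-b}\|\La^{\al/2}F\|_{L^2}^{b}$ with $b>0$ fails the same count). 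This is exactly what the paper does: it applies \ref{fazel6} with $s_3=1-\rho$, $s_2=\ga+2\rho$, $\f1p=\f12-\f{\rho}{2}$, $\f1q=\f{\rho}{2}$, so that $\|\La^{1-\rho}U_F\|_{L^p}\leq C\|\La^{-\rho}F\|_{L^p}\leq C\|F\|_{L^2}$ is non-dissipative, the spare factor $\|\Theta\|_{L^{q}}$ is a priori bounded by \ref{fazel2}, and the pairing partner absorbs the derivative excess, landing on exactly one dissipation power $\|\La^{2\ga+2\rho}\Theta\|_{L^2}=\|\La^{\ga+\be/2}\Theta\|_{L^2}$; then $\|F\|_{L^2}\|\La^{\ga+\be/2}\Theta\|_{L^2}$ splits into a Gronwall term plus an absorbable term.

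There is a second, smaller gap in $I_2$: your bound $\|\La^{3-4\al}\Theta\|_{L^2}\leq C\bigl(\|\La^{\ga+\be/2}\Theta\|_{L^2}+\|\Theta\|_{L^2}\bigr)$ is valid only when $3-4\al\geq0$, i.e. $\al\leq\f34$. For $\al\in(\f34,1)$ the order $3-4\al$ is negative, and a homogeneous norm of negative order is not dominated by norms of nonnegative order (the low frequencies of $\Theta$ are not controlled this way). The paper handles this case separately via the Sobolev embedding $\|\La^{3-4\al}\Theta\|_{L^2}\leq C\|\Theta\|_{L^{1/(2\al-1)}}$, which is admissible because all $L^p$ norms of $\theta$, $1\leq p\leq\infty$, are a priori bounded by \ref{fazel2}; you need this (or an equivalent low-frequency argument) to cover the full range $\f23<\al<1$. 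Your treatment of $I_2$ in the complementary case $3-4\al\geq0$ is fine, and is in fact a small variant of the paper's: you place the derivative on $\Theta$ and use the $\Theta$-dissipation, whereas the paper places it on $F$ and interpolates with $\|\La^{\al/2}F\|_{L^2}$; both close under $\al>\f23$.
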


\begin{proof}(Proposition \ref{fazel61}) 
We start with the scaled variables. In each case, we specify how small $\epsilon$ needs to be in order to close the estimates. In the end, we choose and fix one such $\epsilon$, say the half of the smallest upper bound. This argument will then imply the estimates \ref{g:10} and \ref{g:20}. 

In \ref{641} and \ref{65} take $\kappa= 0$ and $s= \ga$, then we want to bound the right hand side of the following relation
\begin{equation}\label{1000}
\f{1}{2} \partial_t (\|F\|^2_{L^2}+ \|\La^{\ga} \Theta\|^2_{L^2})+ \epsilon_0^{\al- \be} \|\La^{ \f{\al}{2}} F\|_{L^2}+ \|\La^{\ga+ \f{\be}{2}} \Theta\|_{L^2} \leq I_1+ I_2+ I_3+ I_4+ I_5
\end{equation}
Since $\langle U\cdot \nabla F, F \rangle=-\f{1}{2} \dpr{\nabla\cdot U}{F^2}=0$, we have  $I_1= 0$.

\subsubsection{Estimate for $I_2$}~\\
{\bf Case $1,\al>\f{3}{4}$}:
$$
I_2\leq \epsilon_0^{2- 3 \alpha} \|\La^{3-4\al}\Theta\|_{L^2} \|F\|_{L^2}\leq \epsilon_0^{2- 3 \alpha} \|\Theta\|_{L^{\f{1}{2\al-1}}} \|F\|_{L^2} \leq 
\frac{1}{100} \|F\|_{L^2}^2 + C_{\epsilon_0}.
$$
{\bf Case $2,\al \leq \f{3}{4}$ }:\\
 we have by H\"older's and Gagliardo-Nirenberg, 
\begin{eqnarray*} 
I_2 
\leq \epsilon_0^{2- 3 \alpha} \|\Lambda^{3- 4 \alpha} F\|_{L^2} \| \Theta\|_{L^2} \leq C \epsilon_0^{2- 3 \alpha} \|\theta_0\|_{L^2} \|\La^{\f{\al}{2}} F\|_{L^2}^{\de} \|F\|_{L^2}^{1-\de}.
\end{eqnarray*}
for $\de=\f{3-4\al}{\al/2}$.  Note that $\de\in (0,1)$, since $\al>2/3$. Applying Young's inequality gives us 
$$
 I_2 \leq \f{\epsilon_0^{2\al-1}}{100} \|\La^{\f{\al}{2}} F\|_{L^2}^2+ C_{\epsilon_0, \|\theta_0\|_{L^2}} (1+\|F\|_{L^2})^2. 
$$
\subsubsection{Estimate for $I_3$}~\\
In this case we are seeking   bounds for two terms $I_3^f$ and $I_3^{\theta}$
\[
I_3 \leq \epsilon_0  |\int F [R_{\alpha}, U_{\Theta} . \nabla] \Theta dx|+ \epsilon_0 |\int F [R_{\alpha}, U_F . \nabla] \Theta dx| :=  I_3^{\theta}+ I_3^f
\]
 Now, for $I_3^{\theta}$, we apply \ref{fazel6}, with $p_1=\infty, p_2=p_3=2$ and $s_3=\f{\be}{2}+\ga+\al=1-2\rho$ and 
$s_2=\f{\al}{2}$. Note that this is within the range of applicability of \ref{fazel6}, since  $s_2+s_3=1+\f{\al}{2}-2\rho>2-\al$, whenever  $\al>\f{2}{3}$ and $0<\rho<<\al-\f{2}{3}$. We get 
$$
I_3^{\theta}\leq C \epsilon_0 \|\Theta\|_{L^\infty} \|\La^{\f{\al}{2}} F\|_{L^2} \|\La^{\f{\be}{2}+\ga+\al} U_\Theta\|_{L^2}\leq 
C\|\theta_0\|_{L^\infty} \|\La^{\f{\al}{2}} F\|_{L^2}  \|\La^{\f{\be}{2}+\ga} \Theta\|_{L^2}
$$
Thus, 
$$
I_3^{\theta}\leq    \f{\epsilon_0^{2\al-1}}{100}  \|\Lambda^{\frac{\alpha}{2}} F\|_{L^2}^2+
C \epsilon_0^{3-2\al}  \|\theta_0\|_{L^\infty}^2 \|\La^{\f{\be}{2}+\ga} \Theta\|_{L^2}^2
$$
Taking $\epsilon_0: C \epsilon_0^{3-2\al}  \|\theta_0\|_{L^\infty}^2\leq \f{1}{100}$ will ensure that we can absorb the second term above behind $\|\La^{\f{\be}{2}+\ga} \Theta\|_{L^2}^2$ on the left-hand side. 

Regarding $I_3^f$, we have by \ref{fazel5} with $p_3=2$, $s_1=0$, $s_2=1-\al+\rho$, 
$
\f{2}{p_1}=\f{3\al}{2}-1-\rho, \f{2}{p_2}=2-\f{3\al}{2}+\rho.
$
\begin{eqnarray*}
I_3^f &\leq & \epsilon_0 \|\nabla U_F\|_{L^2} \|\La^{1-\al+\rho} F\|_{L^{p_2}}\|\Theta\|_{L^{p_1}}\leq 
\epsilon_0 C  \|F\|_{L^2} \|\La^{\al/2} F\|_{L^2}\|\theta_0\|_{L^{p_1}}.
\end{eqnarray*}
where we have used the Sobolev embedding estimate $\|\La^{1-\al+\rho} F\|_{L^{p_2}}\leq C \|\La^{\al/2} F\|_{L^2}$. Applying Cauchy-Schwartz yields 
\begin{eqnarray*}
I_3^f &\leq & \f{\epsilon_0^{\al-\be}}{100} \|\La^{\al/2} F\|_{L^2}^2+ \epsilon^{1+2 \be} C \|\theta_0\|_{L^{p_1}}^2  \|F\|_{L^2}^2\\
&\leq & \f{\epsilon_0^{\al-\be}}{100} \|\La^{\al/2} F\|_{L^2}^2+ \f{1}{100} \|\theta_0\|_{L^{p_1}}^2  \|F\|_{L^2}^2,
\end{eqnarray*}
where we took $\epsilon$ so that $\epsilon^{1+2 \be} C \leq \f{1}{100}$.
\subsubsection{Estimate for $I_5$}~\\
For $I_5^f$, take $s_3=1-\rho, s_2=\ga+2\rho$, $p: \f{1}{p}=\f{1}{2}-\f{\rho}{2}$, $q: \f{1}{q}=\f{\rho}{2}$, then we have by \ref{fazel6},  
$$
|\langle[\Lambda^{\gamma}, U_F . \nabla] \Theta, \Lambda^{\gamma} \Theta \rangle|\leq C \|\Theta\|_{L^q} 
\|\La^{2\ga+2\rho} \Theta\|_{L^2} \|\La^{1-\rho} U_F\|_{L^p}.
$$
Also, by Sobolev embedding $\|\La^{1-\rho} U_F\|_{L^p}\leq C \|\La^{-\rho} F\|_{L^p}\leq C\|F\|_{L^2}$. All in all, noting that $2(\ga+\rho)=\ga+\be/2$, 
\begin{eqnarray*}
I_5^f= \epsilon_0^{\al} |\langle[\Lambda^{\gamma}, U_F . \nabla] \Theta, \Lambda^{\gamma} \Theta \rangle| &\leq& 
\f{1}{100} \|F\|_{L^2}^2+ \epsilon_0^{2 \al} C \|\Theta_0\|_{L^q}^2 \|\La^{\be/2+\ga} \Theta\|_{L^2}^2\\ &\leq&   
\f{1}{100} \|F\|_{L^2}^2+ \f{\epsilon^{\al- \be}}{100}  \|\La^{\be/2+\ga} \Theta\|_{L^2}^2. 
\end{eqnarray*}
where we took $\epsilon_0$ so that $\epsilon_0 C \|\theta_0\|_{L^{q}} \leq \f{1}{100}$.\\
For the term containing $U_\Theta$, we have by \ref{25}, with $s_1=\be/2, s_2=\ga, s_3=\al$, 
\begin{equation}
\label{g:80}
 I_5^{\theta}= \epsilon_0^{\al} |\langle[\Lambda^{\gamma}, U_\Theta\cdot  \nabla] \Theta, \Lambda^{\gamma} \Theta \rangle|\leq C \epsilon_0^{\al}\|\theta_0\|_{L^\infty} \|\La^{\ga+\be/2} \Theta\|_{L^2}^2 \leq \f{1}{100} \|\La^{\ga+\be/2} \Theta\|_{L^2}^2
\end{equation}
where we take $C \epsilon_0^{\al} \|\theta_0\|_{L^\infty} \leq \f{1}{100}$. 
 Introducing 
$$
J(t)= \|\La^\ga \Theta\|_{L^2}^2+ \|F\|_{L^2}^2,
$$
and putting all the estimates together, we obtain the bound 
$$
J'(t)+ \|\Lambda^{\frac{\alpha}{2}}f\|_{L^2}^2+\|\Lambda^{\frac{\beta}{2}+ \gamma} \theta \|_{L^2}^2\leq C_{\epsilon_0, \|\theta_0\|_{L^2\cap L^\infty}} J(t)
$$
An application of the Gronwall's inequality yields the bounds for the right hand side of \ref{1000}.
\end{proof}
Now that we have the estimate for $\sup_{0\leq t\leq T} \|f\|_{L^2}$, we use it to obtain the estimates for $\sup_{0\leq t\leq T} \|f\|_{L^4}$. 
\subsection{$L^4$ Estimate}
The precise result that we prove is the following. 
\begin{proposition}
\label{propo:3}
Let $1> \alpha > \frac{2}{3}$, $(u, \theta)$ be the solution of \ref{BQE} and $(u_0, \theta_0)$ be as specified in Theorem $(1)$. Assume $f$ satisfies \ref{W1}, then for any $T> 0$ there exists $C_T= C(T)$, such that
\begin{eqnarray*}
\sup_{0 \leq t \leq T} \|f\|_{L^4} + \int_0^T \|f\|^4_{L^{\frac{8}{2- \alpha}}} & < & C_T\\
 \sup_{0 \leq t \leq T} \|\Lambda^{\frac{\alpha}{2}} f\|_{L^2} + \int_0^T  \|\Lambda^{\alpha} f\|^2_{L_x^2} dt & < & C_T\\
 \sup_{0 \leq t \leq T} \|\Lambda^{ \frac{3\beta}{2}} \theta\|_{L^2} + \int_0^T  \|\Lambda^{2 \beta} \theta\|^2_{L_x^2}) dt & < & C_T\\
\end{eqnarray*}

\end{proposition}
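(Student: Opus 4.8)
The plan is to upgrade the $L^2$-level control of Proposition \ref{fazel61} to the $L^4$ level by running three energy identities simultaneously and closing them with a single Gronwall argument. Concretely, I work with the scaled system \ref{fazel6''} and combine the $L^p$ inequality \ref{21} at $p=4$, the Sobolev inequality \ref{641} at $s=\f{\al}{2}$, and the temperature inequality \ref{65} at $\ka=\f{3\be}{2}$. These exponents are dictated by the statement: the dissipative terms they generate on the left are exactly the three time-integrated quantities claimed, since \ref{21} produces $\epsilon_0^{\al-\be}\|F\|_{L^{8/(2-\al)}}^4$, \ref{641} produces $\epsilon_0^{\al-\be}\|\La^{\al}F\|_{L^2}^2$, and \ref{65} produces $\|\La^{2\be}\Theta\|_{L^2}^2$. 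Setting $E(t)=\|F\|_{L^4}^4+\|\La^{\f{\al}{2}}F\|_{L^2}^2+\|\La^{\f{3\be}{2}}\Theta\|_{L^2}^2$, the goal is a differential inequality $E'(t)+(\text{dissipation})\leq C\,g(t)\,(1+E(t))$, where $g\in L^1(0,T)$ is assembled from the time-integrable bounds $\|\La^{\f{\al}{2}}f\|_{L^2}^2$ and $\|\La^{\f{\be}{2}+\ga}\theta\|_{L^2}^2$ furnished by \ref{g:20}; Gronwall then delivers all six bounds at once.

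I would treat the transport terms $I_1$ (from \ref{641}) and $I_5$ (from \ref{65}) first. Since $\f{\al}{2}<1$ and $\f{3\be}{2}=\f{3(1-\al)}{2}<1$ for $\al>\f13$, both reduce to commutator pairings, and I split $U=U_F+U_\Theta$ according to \ref{po}. In the $U_\Theta$ pieces the velocity gains at least $\al$ derivatives from $\Theta$ (recall $U_\Theta\sim\La^{-\al}\Theta+\La^{1-3\al}\Theta$, the milder second component being only easier), so the endpoint estimate \ref{25} with $s_1=\f{\be}{2},\,s_2=\f{3\be}{2},\,s_3=\al$ (whose hypotheses hold precisely because $\al>\f12$) together with the maximum principle $\|\Theta\|_{L^\infty}=\|\theta_0\|_{L^\infty}$ bounds the high-regularity temperature commutator essentially by $C\epsilon_0^{\al}\|\theta_0\|_{L^\infty}\|\La^{2\be}\Theta\|_{L^2}^2$, absorbable after taking $\epsilon_0$ small. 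The $U_F$ pieces, where $\nabla U_F\sim F$ only, are controlled by Corollary \ref{cor:lk}: for $I_1$ one applies \ref{f:10}/\ref{f:20} with $s=\f{\al}{2}$ and redistributes derivatives onto the $\La^{\al}F$ dissipation, while for $I_5$ one applies \ref{f:20} with $s=\f{3\be}{2}$ and a well-chosen $a\in[\f{3\be}{2},1]$ so that exactly one factor of $\|\La^{2\be}\Theta\|_{L^2}$ is extracted for absorption and the remaining factors are estimated by $\|F\|_{L^4}$, $\|\La^{\f{\al}{2}}F\|_{L^2}$ and $\|\La^{\f{3\be}{2}}\Theta\|_{L^2}$.

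The forcing terms $I_2,I_3$ (from \ref{641}) and $K_1,K_2$ (from \ref{21}) come next. The key structural gain, available throughout, is that $3-4\al<\f{3\be}{2}$ for $\al>\f35$, so the second-generation factor $\La^{2(\be-\al)}\p_1\sim\La^{3-4\al}$ applied to $\Theta$ is always dominated (after interpolation with lower norms) by the uniform bound on $\|\La^{\f{3\be}{2}}\Theta\|_{L^2}$ being proved. For $I_2$, moving both copies of $\La^{\f{\al}{2}}$ onto $F$ writes it as $\epsilon_0^{2-3\al}|\dpr{\La^{3-4\al}\Theta}{\La^{\al}F}|$, which by Cauchy--Schwarz and Young yields $\f{\epsilon_0^{\al-\be}}{100}\|\La^{\al}F\|_{L^2}^2$ plus a term carrying the controlled coefficient $\|\La^{3-4\al}\Theta\|_{L^2}^2$. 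For $I_3,K_2$ the commutator is split as above, the $U_\Theta$ part treated by \ref{fazel5}/\ref{fazel6} with $\|\theta_0\|_{L^\infty}$, the $U_F$ part by \ref{f:10}. The cubic forcing $K_1=\epsilon_0^{2-3\al}\int F|F|^2\La^{3-4\al}\Theta\,dx$ is distributed by Hölder as $\|F\|_{L^4}^2\,\|F\|_{L^{8/(2-\al)}}\,\|\La^{3-4\al}\Theta\|_{L^{8/(2+\al)}}$ (the exponent identity $\f{2}{4}+\f{2-\al}{8}+\f{2+\al}{8}=1$ holds, and $\f{8}{2+\al}>2$ so the last factor is controlled by $L^2$-based Sobolev norms of $\Theta$), after which Young converts the single power of the dissipation norm into $\delta\|F\|_{L^{8/(2-\al)}}^4$ plus $C_\delta(\|F\|_{L^4}^4)^{2/3}$ times an $L^1_t$ temperature coefficient.

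The main obstacle is the simultaneous closure rather than any isolated estimate: the high-regularity temperature transport $I_5$ with $U_F$ and the cubic-in-$F$ forcing $K_1$ force one to borrow from the weak $\La^{2\be}$ dissipation and the $L^{8/(2-\al)}$ dissipation at the same time, and the integrability exponents must be balanced so that (i) precisely one top-regularity factor is absorbed by a dissipative term after a small-$\epsilon_0$ or Young step, and (ii) every leftover coefficient lies in $L^1_t(0,T)$, which is guaranteed only through the time-integrated bounds \ref{g:20} of Proposition \ref{fazel61}. Once every term has been written in the schematic form $\delta\cdot(\text{dissipation})+g(t)\,(1+E(t))$ with $g\in L^1$, I fix $\epsilon_0$ as the smallest of the finitely many resulting thresholds and apply Gronwall to $E$, which closes the estimate and yields the three uniform-in-time bounds together with their three time-integrated companions.
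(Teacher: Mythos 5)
Your overall scheme is the paper's: the same three energy identities (\ref{21} at $p=4$, \ref{641} at $s=\f{\al}{2}$, \ref{65} at $\ka=\f{3\be}{2}$), the same splitting $U=U_F+U_\Theta$ with the endpoint commutator estimate \ref{25} and $\|\Theta\|_{L^\infty}$ handling the $U_\Theta$ pieces and \ref{20}/\ref{f:10} handling the $U_F$ pieces, and a final Gronwall closure. However, your treatment of $K_1$ contains a genuine gap, and it fails precisely in the range the proposition is about. You H\"older $K_1$ as $\|F\|_{L^4}^2\,\|F\|_{L^{8/(2-\al)}}\,\|\La^{3-4\al}\Theta\|_{L^{8/(2+\al)}}$, keeping only one power of $F$ in the dissipation norm. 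By 2D Sobolev embedding, $\|\La^{3-4\al}\Theta\|_{L^{8/(2+\al)}}\sim\|\La^{(14-17\al)/4}\Theta\|_{L^2}$, and $\f{14-17\al}{4}\leq \be-2\rho$ (the highest temperature regularity that Proposition \ref{fazel61} makes square-integrable in time) only when $\al\geq\f{10+8\rho}{13}$; so the coefficient you claim lies in $L^1_t$ does not, for $\al$ near $\f23$. The alternative of absorbing into the $\|\La^{2\be}\Theta\|_{L^2}^2$ dissipation also fails: interpolating $\|\La^{(14-17\al)/4}\Theta\|_{L^2}\leq\|\La^{2\be}\Theta\|^a_{L^2}\|\Theta\|^{1-a}_{L^2}$ with $a=\f{14-17\al}{8-8\al}$ and applying Young leaves behind $\|F\|_{L^4}^{8/(3-2a)}$ with constant coefficient, and $\f{8}{3-2a}\leq 4$ (i.e.\ linearity in the energy $E$) holds iff $a\leq\f12$ iff $\al\geq\f{10}{13}$. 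For $\f23<\al<\f{10}{13}$ you are left with $E'\lesssim E^{\sigma}$, $\sigma>1$, which only yields a local-in-time bound, not the claimed bound on $[0,T]$ for arbitrary $T$. In other words, your argument closes only at (essentially) the previously known threshold of \cite{WXXY} and collapses in the new range $\f23<\al\leq\f{10}{13}$.

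The repair is the paper's H\"older distribution in $K_1$ (case $3-4\al>0$): put \emph{all three} powers of $F$ into the dissipation norm, $K_1\leq \epsilon_0^{2-3\al}\|F\|^3_{L^{8/(2-\al)}}\|\La^{3-4\al}\Theta\|_{L^{8/(2+3\al)}}$, and interpolate the temperature factor as $\|\La^{2\be}\Theta\|^a_{L^2}\|\Theta\|^{1-a}_{L^{q_0}}$ with $a=\f{3-4\al}{2\be}$ and a conserved norm $\|\Theta\|_{L^{q_0}}$, $q_0\geq 1$. Young's inequality then gives $\f{\epsilon_0^{\al-\be}}{100}\|F\|^4_{L^{8/(2-\al)}}+C\|\La^{2\be}\Theta\|^{4a}_{L^2}$, where the leftover contains no $F$ at all, and $4a<2$ holds exactly when $\al>\f23$, so it is absorbed by the temperature dissipation with only a constant remainder. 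This is the one place where the threshold $\al>\f23$ is used at full strength, and it is the step your proposal misses; the rest of your outline (the treatment of $I_1$, $I_2$, $I_3$, $I_5$, $K_2$) is consistent with the paper's proof.
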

\begin{proof}
We again use the scaled variables. 
In \ref{641}, \ref{65} and \ref{21} take $\kappa= \f{\al}{2}$, $s= \f{3 \be}{2}$ and $p= 4$ to get 
 \begin{eqnarray*}
& &  \partial_t( \frac{1}{4} \|F \|^4_{L^4}+ \frac{1}{2} \|\Lambda^{\frac{3 \beta}{2}} \Theta \|^2_{L^2} +\frac{1}{2} \|\Lambda^{\frac{\alpha}{2}} F \|^2_{L^2})  +  C_0 \epsilon_0^{\al- \be} \|F\|^4_{L^{\frac{8}{2- \alpha}}}+\\
 &+& \epsilon_0^{\al- \be}  \|\Lambda^{\alpha} F \|^2_{L^2}
 +  \|\Lambda^{2 \beta} \Theta \|^2_{L^2}
 \leq K_1+ K_2+ K_3+ I_1+ I_2+ I_3+ I_4
 \end{eqnarray*}
 We now proceed to establish proper bounds for each term in the right hand side.
\subsubsection{Estimate for $K_1$}~\\
{\bf Case $3- 4 \alpha < 0$:}~\\
 In this case we have,
  \[
   |\int F^3  \Lambda^{3- 4 \alpha} \Theta ~dx| \leq \|F\|^3_{L^4} 
   \|\Lambda^{3- 4 \alpha} \Theta\|_{L^4},
  \]
  and by Sobolev embedding 
  \[
  \|\Lambda^{3- 4 \alpha} \Theta\|_{L^4} \leq \|\Theta\|_{L^{\f{1}{2\al-\f{5}{4}}}}\leq C_\epsilon \|\theta_0\|_{L^{\f{1}{2\al-\f{5}{4}}}}.
  \]
 hence 
 $$
  K_1 \leq  
   \|F\|^4_{L^4}+ C_{\epsilon}.
  $$
  \\
{\bf Case $3- 4 \alpha >0$:} 
  We have 
  \[
   |\int F^3 \Lambda^{3- 4 \alpha} \Theta ~dx| \leq \|F\|^3_{L^{\frac{8}{2- \alpha}}} \|\Lambda^{3- 4 \alpha} \Theta\|_{L^{\frac{8}{2+ 3 \alpha}}}.
  \]
Furthermore, 
  \[
  \|\Lambda^{3- 4 \alpha} \Theta\|_{\frac{8}{2+ 3 \alpha}} \leq 
  \|\Lambda^{2 \beta} \Theta\|^a_{L^2} \|\Theta\|^{1- a}_{L^{q_0}}\leq C_{\epsilon_0} \|\Lambda^{2 \beta} \Theta\|^a_{L^2} \|\theta_0\|^{1- a}_{L^{q_0}}
  \] 
  where
  \[
  a= \frac{3- 4 \alpha}{2 \beta}
  \]
  and
  \[
  q_0= \f{4(2 \al- 1)}{3 \al \be+ 6 \al- 4}
  \]
 Note that  for $\al>2/3$, $q_0 \geq 1$. 
 Therefore
 \[
   K_1 \leq C \epsilon_0^{2- 3 \alpha} \|F\|^3_{L^{\frac{8}{2- \alpha}}} \|\Lambda^{2 \beta} \Theta\|^a_{L^2} \leq \frac{\epsilon_0^{\al- \be}}{100} \|F\|^4_{L^{\frac{8}{2- \alpha}}}+ \epsilon_0^{\f{9- 14 \alpha}{2}} C \|\Lambda^{2 \beta} \Theta\|^{4 a}_{L^2}.
\]
Clearly $4 a < 2$, hence
\[
K_1 \leq \frac{\epsilon_0^{\al- \be}}{100} \|F\|^4_{L^{\frac{8}{2- \alpha}}}+ \frac{1}{100} \|\Lambda^{2 \beta} \Theta\|^2_{L^2}+ C_{\epsilon_0}
\]
\subsubsection{Estimate for $K_2$}~\\
{\bf Estimate for $K_2^f$:}\\
For $0 < \delta << 1$, to be determined later, by \ref{f:10} with
$s= 1- \alpha$, then
\begin{eqnarray*}
K_2^f= \epsilon_0 |\langle [R_{\alpha}, U_F. \nabla] \Theta, F^3 \rangle |  &\leq & \epsilon_0 C \|\Theta \|_{L^{\frac{8}{3\alpha- 2}}} \| \Lambda^{1- \alpha} (F^3)\|_{L^{\frac{4}{4- \alpha}}} \|\nabla U_F\|_{L^{\frac{8}{2- \alpha}}} \\
 & \leq & \epsilon_0 C \| \Lambda^{1- \alpha} F\|_{L^2} \|F\|^3_{L^{\frac{8}{2- \alpha}}} \|\Theta \|_{L^{\frac{8}{3\alpha- 2}}},
\end{eqnarray*}
 Hence we conclude 
\[
K_2^f
\leq \frac{\epsilon_0^{\al- \be}}{100} \|F\|^4_{L^{\frac{8}{2- \alpha}}}+ 
C_{\epsilon_0}  \|\Lambda^{1-\al} F \|^4_{L^2}.
\]
But 
\[
\|\Lambda^{1-\al} F \|^4_{L^2} \leq \|\Lambda^{\alpha} F \|^{\frac{4(1-\al)}{\alpha}}_{L^2} 
\|F\|^4_{L^2}.
\]
Note however  that $\frac{4(1-\al)}{\alpha} < 2$, since $\alpha > \frac{2}{3}$, therefore 
\[
K_2^f \leq \frac{\epsilon_0^{\al- \be}}{100} \|F\|^4_{L^{\frac{8}{2- \alpha}}}+ \frac{\epsilon_0^{\al- \be}}{100} \| \Lambda^{\alpha} F \|^2_{L^2}+ 
C_{\epsilon_0}. 
\]
{\bf Estimate for $K_2^{\theta}$:}\\ 
Again for $0< \de<<1$, apply \ref{fazel6} with $s_3= \alpha+ \delta$ and 
$s_2= 2 (1- \alpha)$
\begin{eqnarray*}
K_2^{\theta}= \epsilon_0 |\dpr{[R_{\alpha}, U_{\Theta}. \nabla] \Theta}{F^3}| & \leq & \epsilon_0 \|\Lambda^{\alpha+ \delta} U_{\Theta}\|_{L^{\frac{8}{\alpha}}} \|\Lambda^{2 (1- \alpha)} (F^3)\|_{L^{\frac{4}{4- \alpha}}} \|\Theta\|_{L^{\frac{8}{\alpha}}}\\
&\leq & \epsilon_0 C \|\Lambda^{ \delta} \Theta\|_{L^{\frac{8}{\alpha}}} \|F\|^2_{L^{\frac{8}{2- \alpha}}} \|\Lambda^{2 (1- \alpha)} F\|_{L^2} \|\theta_0\|_{L^{\frac{8}{\alpha}}}.
\end{eqnarray*}
Note 
\[
\|\Lambda^{ \delta} \Theta\|_{L^{\frac{8}{\alpha}}} \leq \|\Lambda^{ \frac{\beta}{2}- 2 \rho} \Theta\|^a_{L^2} \|\theta\|^{1- a}_{L^q}
\]
where $a=\f{\de}{\f{\be}{2}-2\rho}$ and $q:\f{1-a}{q}+\f{a}{2}=\f{\al}{8}$. Clearly $q\in (1, \infty)$, provided $\de<<1$. 
 We have  obtained 
\begin{eqnarray*}
K_2^{\theta} \leq  \frac{\epsilon_0^{\al- \be}}{100} \|F\|^4_{L^{\frac{8}{2- \alpha}}} + \epsilon_0^{1+ 2\be} C
\| \Lambda^{2 (1- \alpha)} F \|^2_{L^2}
\end{eqnarray*}
and
\[
\| \Lambda^{2 (1- \alpha)} F \|_{L^2} \leq \| \Lambda^{\alpha} F \|^a_{L^2} \|F\|^{1- a}_{L^2}
\]
where $a= \frac{2 (1- \alpha)}{\alpha} < 1$. Thus, 
\begin{eqnarray*}
K_2^{\theta} \leq  \frac{\epsilon_0^{\al- \be}}{100} \|F\|^4_{L^{\frac{8}{2- \alpha}}}+ \frac{\epsilon_0^{\al- \be}}{100} \|\Lambda^{\alpha} F \|^2_{L^2}+ C_{\epsilon} 
\end{eqnarray*}
hence
\[
K_2 \leq \frac{\epsilon_0^{\al- \be1}}{50} \|F\|^4_{L^{\frac{8}{2- \alpha}}}+ \frac{\epsilon_0^{\al- \be}}{50} \|\Lambda^{\alpha} F \|^2_{L^2}+ C_{\epsilon} 
\]
\subsubsection{Estimate for $I_1$}~ \\
{\bf Estimate for $I_1^f$:} 
\[
I_1^{f} = \epsilon_0^{\al} |\langle [\La^{\f{\al}{2}}, U_F . \nabla] F, \La^{\f{\al}{2}} F\rangle| \leq \epsilon_0^{\al} \|\La^{\al} F\|_{L^2} \|\La^{- \f{\al}{2}} [\La^{\f{\al}{2}}, U_F . \nabla] F\|_{L^2}
\]
now in \ref{20} take $s_1= s_2= \f{\al}{2}$, $V= \La^{-1} F$, $\vp= F$, $a= 1$ and $q=r= 4$ to get
\[
\|\La^{- \f{\al}{2}} [\La^{\f{\al}{2}}, U_F . \nabla] F\|_{L^2} \leq \|F\|^2_{L^4}
\]
then
\begin{eqnarray*}
I_1^f &\leq& \epsilon_0^{\al} C \|\La^{\al} F\|_{L^2} \|F\|^2_{L^4} \leq \f{1}{100} \|F\|^4_{L^4} + C \f{\epsilon_0^{2 \al}}{100} \|\La^{\al} F\|^2_{L^2}\\
&\leq& \f{1}{100} \|F\|^4_{L^4} +  \f{\epsilon_0^{ \al- \be}}{100} \|\La^{\al} F\|^2_{L^2}
\end{eqnarray*}
where we took 
\[
\epsilon_0 \leq \f{1}{100 C}
\]
{\bf Estimate for $I_1^{\theta}$:} 
\[
I_1^{\theta} = \epsilon_0^{\al} |\langle [\La^{\f{\al}{2}}, U_{\Theta} . \nabla] F, \La^{\f{\al}{2}} F\rangle| \leq \epsilon_0^{\al} \|\La^{\al} F\|_{L^2} \|\La^{- \f{\al}{2}} [\La^{\f{\al}{2}}, U_{\Theta} . \nabla] F\|_{L^2}
\]
if in \ref{25} we take $s_1= s_2= \f{\al}{2}$, $s_3= \al$, $V= \Theta $, $\vp= F$, $a= 1$ then
\begin{eqnarray*}
\|\La^{- \f{\al}{2}} [\La^{\f{\al}{2}}, U_{\Theta} . \nabla] F\|_{L^2} &\leq& \|\Theta\|_{L^{\infty}} \|\La^{\be} F\|_{L^2}\\
&\leq& \|\Theta\|_{L^{\infty}} \|\La^{\al} F\|^{\f{\be}{\al}}_{L^2} \|F\|^{\f{\al- \be}{\al}}_{L^2}
\end{eqnarray*}
therefore
\[
I_1^{\theta} \leq \epsilon_0^{\al} C \|\La^{\al} F\|^{1+ \f{\be}{\al}}_{L^2}  \leq \f{\epsilon_o^{\al- \be}}{100} \|\La^{\al} F\|^2_{L^2}+ C_{\epsilon_0}
\]
\subsubsection{Estimate for $I_2$}~\\ 
\textup{Case $3- 4\alpha < 0$}: \\
In this case we have $3 (1- \alpha) < \alpha$, therefore 
\[
 |\langle \Lambda^{2(\beta- \alpha)+ \frac{\alpha}{2}} \partial_1 \Theta, \Lambda^{\frac{\alpha}{2}} F\rangle| \leq \|\Lambda^{3( 1- \alpha)} F\|_{L^2} \|\Theta\|_{L^2}
\]
and
\[
\|\Lambda^{3( 1- \alpha)} F\|_{L^2} \leq \|\Lambda^{ \alpha} F\|^a_{L^2} \|F\|^{1- a}_{L^2}
\]
where 
\[
a= \frac{3 (1- \alpha)}{\alpha} < 1
\]
therefore 
\[
I_2 \leq C \epsilon_0^{2- 3 \alpha} \|\Lambda^{ \alpha} F\|^a_{L^2} \leq \frac{\epsilon_0^{\al- \be}}{100} \|\Lambda^{ \alpha} F\|^2_{L^2}+ C_{\epsilon_0}
\]
\textup{Case $3- 4\alpha > 0$}:\\
 By H\"older
 \[
 |\langle \Lambda^{2(\beta- \alpha)+ \frac{\alpha}{2}} \partial_1 \Theta, \Lambda^{\frac{\alpha}{2}} F\rangle| \leq \|\Lambda^{\alpha} F\|_{L^2} \|\Lambda^{3- 4 \alpha} \Theta\|_{L^2}
 \]
 and then 
 \[
 \|\Lambda^{3- 4 \alpha} \Theta\|_{L^2} \leq \|\Lambda^{2\beta} \Theta\|^a_{L^2} \|\Theta\|^{1-a}_{L^{\infty}},
 \]
 where 
 \[
 a= \frac{3- 4 \alpha}{2 \beta}
 \]
 Therefore
 \[
 I_2 \leq \frac{\epsilon_0^{\al- \be}}{100} \|\Lambda^{\alpha} F\|^2_{L^2}+ C_{\epsilon_0} 
 \|\Lambda^{2 \beta} \Theta\|^{2 a}_{L^2}.
 \]
  Since $a <1$,
 \[
I_2 \leq \frac{\epsilon_0^{\al- \be}}{100} \|\Lambda^{\alpha} F\|^2_{L^2}+ \frac{1}{100} \|\Lambda^{2 \beta} \Theta\|^2_{L^2}+ C_{\epsilon_0}. 
 \]
 Considering the two sub-cases above, the last inequality is the proper estimate for $I_2$.
 \\
\subsubsection{Estimate for $I_3$}~\\
{\bf Estimate for $I_3^{\theta}$:}
 \begin{eqnarray*}
 I_3^{\theta}= \epsilon_0  |\dpr{\Lambda^{\frac{\alpha}{2}} [R_{\alpha}, U_{\Theta} . \nabla] \Theta}{\Lambda^{\frac{\alpha}{2}} F}| &\leq& \epsilon_0 \|\Lambda^{\alpha} F\|_{L^2} \| [R_{\alpha}, U_{\Theta} . \nabla] \Theta\|_{L^2} 
 \end{eqnarray*}
   Now if in \ref{20} we take $s_1= 0$, $s_2= \beta$, $V= \La^{-\al} \Theta$, $a=1$, $p= 2$ and $q= r= 4$ then
  \[
  \| [R_{\alpha}, U_{\Theta} . \nabla] \Theta\|_{L^2} \leq \|\La^{\beta} \Theta\|^2_{L^4} \leq (\|\La^{2 \beta} \Theta\|^{\f{1}{2}}_{L^2} \|\Theta\|^{\f{1}{2}}_{L^{\infty}})^2
  \]
  therefore
  \[
  I_3^{\theta} \leq \epsilon_0 \|\Theta\|^{\f{3}{2}}_{L^{\infty}} \|\La^{\al} F\|_{L^2} \|\La^{2 \beta} \Theta\|_{L^2} \leq \f{\epsilon_0^{\al- \be}}{100} \|\La^{\al} F\|^2_{L^2}+ \f{1}{100} \|\La^{2 \beta} \Theta\|^2_{L^2}
  \]
  where we took
 \[
 \epsilon_0 \leq (\frac{1}{10^4  \|\theta_0\|_{L^{\infty}}})^{\frac{1}{3-2\al}}.
 \]
\\
{\bf Estimate for $I_3^f$:}
 \begin{eqnarray*}
 I_3^f= \epsilon_0  |\dpr{\Lambda^{\frac{\alpha}{2}} [R_{\alpha}, U_F. \nabla] \Theta}{\Lambda^{\frac{\alpha}{2}} F}| &\leq& \epsilon_0 \|\Lambda^{\alpha} F\|_{L^2} \| [R_{\alpha}, U_F. \nabla] \Theta\|_{L^2} \\
 &\leq & \f{\epsilon_0^{\al- \be}}{100} \|\Lambda^{\alpha} F\|^2_{L^2}+ C \epsilon_0^{3- 2 \al} \| [R_{\alpha}, U_F. \nabla] \Theta\|^2_{L^2}
 \end{eqnarray*}
Now by applying \ref{20}  with  $s_1=0, p=2, q=r=4$, $s_2=1-\al$, $a=1$, we have
 \[
 \epsilon_0^{3-2\al} \| [R_{\alpha}, U_F. \nabla] \Theta\|^2_{L^2}\leq  \epsilon_0^{3-2\al} \|F \|^2_{L^4} \|\Lambda^{1- \alpha} \Theta\|^2_{L^4} \leq \frac{1}{100}\|F \|^4_{L^4}+ 
 C \epsilon_0^{6-4\al}
 \|\Lambda^{\beta} \Theta\|^4_{L^4}
 \] 
 and
 \[
 \|\Lambda^{\beta} \Theta\|_{L^4} \leq \|\Lambda^{2 \beta} \Theta\|^{\frac{1}{2}}_{L^2} \|\Theta\|^{\frac{1}{2}}_{L^{\infty}}.
 \]
 If we take $\epsilon_0$ so that
 \[
 \epsilon_0 \leq (\frac{1}{100 C \|\theta_0\|_{L^{\infty}}} )^{\frac{1}{6-4\al}}
 \]
 then
 \begin{eqnarray*}
 \epsilon_0^{3-2\al} \| [R_{\alpha}, U_f. \nabla] \Theta\|^2_{L^2} &\leq & \ \frac{1}{100} \|F\|^4_{L^4}+ \frac{1}{100} \|\Lambda^{2 \beta} \Theta\|^2_{L^2},
  \end{eqnarray*}
   therefore
 \[
 I_3^f \leq \f{\epsilon_0^{\al- \be}}{100} \|\Lambda^{\alpha} F\|^2_{L^2}+ \frac{1}{100} \|F\|^4_{L^4}+ \frac{1}{100} \|\Lambda^{2 \beta} \Theta\|^2_{L^2}. 
 \]
 \subsubsection{Estimate for $I_5$}~\\
{\bf Estimate for $I_5^{\theta}$:}
 Apply \ref{25} with $s_1=\be/2, s_2=3\be/2$ and $s_3=\al$, 
 \begin{equation}
 \label{g:100}
 I_5^{\theta}= \epsilon_0^{\al} |\langle  [\Lambda^{\frac{3 \beta}{2}}, U_\Theta \cdot \nabla] \Theta, \Lambda^{\frac{3 \beta}{2}} \Theta \rangle| \leq \epsilon_0^{\al} C\|\theta_0\|_{L^\infty}  \|\La^{2\be} \Theta\|_{L^2}^2 \leq \f{1}{100} \|\La^{2\be} \Theta\|_{L^2}^2
 \end{equation}
 where we took
 \[
 \epsilon_0 \leq (\f{1}{100 C \|\Theta_0\|_{L^{\infty}}})^{\f{1}{\al}}
 \]
{\bf Estimate for $I_5^f$:}
 \begin{eqnarray*}
 I_5^f= \epsilon_0^{\alpha} |\langle  [\Lambda^{\frac{3 \beta}{2}}, U_F \cdot  \nabla] \Theta, \Lambda^{\frac{3 \beta}{2}} \Theta \rangle| &=& \epsilon_0^{\alpha} |\langle \Lambda^{\frac{-\beta}{2}} [\Lambda^{\frac{3 \beta}{2}}, U_F \cdot  \nabla] \Theta, \Lambda^{2 \beta} \Theta \rangle| \\
 &\leq & \frac{1}{100} \|\Lambda^{2 \beta} \Theta\|^2_{L^2}+ C \epsilon_0^{2 \alpha} \|\Lambda^{\frac{-\beta}{2}} [\Lambda^{\frac{3 \beta}{2}}, U_F\cdot \nabla] \Theta\|^2_{L^2}
 \end{eqnarray*}
 We apply \ref{20} with $s_1= \frac{\beta}{2}$, $s_2= \frac{3 \beta}{2}$, $V= U_F$, $\phi= \Theta$ and $q= r= 4$ then
 \begin{eqnarray*}
 \epsilon_0^{2 \al} C \|\Lambda^{\frac{-\beta}{2}} [\Lambda^{\frac{3 \beta}{2}}, U_F. \nabla] \Theta\|^2_{L^2} \leq \epsilon_0^{2 \al} C \|F\|^2_{L^4} \|\Lambda^{\beta} \Theta \|^2_{L^4} \leq \frac{1}{100} \|F\|^4_{L^4}+ \epsilon_0^{4 \al} CC \|\Lambda^{\beta} \Theta \|^4_{L^4}
 \end{eqnarray*}
 and
 \[
  \|\Lambda^{\beta} \Theta \|_{L^4} \leq  \|\Lambda^{2 \beta} \Theta \|^{\frac{1}{2}}_{L^2} \|\Theta \|^{\frac{1}{2}}_{L^{\infty}},
 \]
 Therefore 
 \[
 \epsilon_0^{2 \alpha} \|\Lambda^{-\frac{\beta}{2}} [\Lambda^{\frac{3 \beta}{2}}, U_F . \nabla] \Theta\|^2_{L^2} \leq \frac{1}{100} \|F\|^4_{L^4}+ C \epsilon_0^{2 \alpha} \| \Theta_0\|^2_{L^{\infty}} \|\Lambda^{2 \beta} \Theta \|^2_{L^2}.
 \]
 From here we take $\epsilon_0$ so small that 
 \[
 \epsilon_0 \leq (\frac{1}{100 C \|\Theta_0\|^2_{L^{\infty}}})^{\frac{1}{4 \alpha}}.
 \]
 to get
 \[
 I_5^f \leq \frac{1}{100} \|F\|^4_{L^4}+ \frac{1}{50} \|\Lambda^{2 \beta}\|^2_{L^2}
 \] 
Now putting all the above estimates together along with a using of Gronwall's inequality finishes the proof for $L^4$.
\end{proof}
\subsection{$L^6$ Estimate}~\\
Now we have enough information of $\theta$ and $f$ to get the $L^6$ estimate

\begin{proposition}
\label{propo:4}
Let $\alpha > \frac{2}{3}$, then for any $T > 0$ there exists a $C_T$ such that
\[
\sup_{0 \leq t \leq T} \|F\|^6_{L^6}+ \int_0^T \|F\|^6_{L^{\frac{12}{2- \alpha}}} dt \leq C_T,
\]

\[
\sup_{0 \leq t \leq T} \|\La^{\f{1+ \beta}{2}}F\|^2_{L^2}+ \int_0^T \| \partial F\|^2_{L^2} dt \leq C_T,
\]
and  
\[
\sup_{0 \leq t \leq T} \|\La^{\f{5 \beta}{2}} \Theta\|^2_{L^2}+ \int_0^T \| \La^{3 \beta} \Theta\|^2_{L^2} dt \leq C_T.
\]
\end{proposition}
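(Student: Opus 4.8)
The plan is to mirror the bootstrap of Propositions \ref{fazel61} and \ref{propo:3}, now feeding the $L^2$ and $L^4$ bounds of those two results into the highest rung of the ladder. Working in the scaled variables, I would apply \ref{641} with $s=\f{1+\be}{2}$, \ref{65} with $\kappa=\f{5\be}{2}$, and \ref{21} with $p=6$, and add the three. Since $\al+\be=1$, the dissipative terms collapse exactly to the claimed quantities: $s+\f{\al}{2}=1$ gives $\|\La F\|_{L^2}=\|\partial F\|_{L^2}$, $\kappa+\f{\be}{2}=3\be$ gives $\|\La^{3\be}\Theta\|_{L^2}$, and $p=6$ gives $\|F\|_{L^{\f{12}{2-\al}}}$. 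Because $\al>\f23$ forces $\be<\f13$, both $s=\f{1+\be}{2}<\f23<1$ and $\kappa=\f{5\be}{2}<\f56<1$, so $I_1$ and $I_5$ may be written in commutator form. Setting $J(t)=\f16\|F\|^6_{L^6}+\f12\|\La^{\f{1+\be}{2}}F\|^2_{L^2}+\f12\|\La^{\f{5\be}{2}}\Theta\|^2_{L^2}$, the goal is the differential inequality $J'(t)+c\,\epsilon_0^{\al-\be}(\|F\|^6_{L^{\f{12}{2-\al}}}+\|\La F\|^2_{L^2})+c\|\La^{3\be}\Theta\|^2_{L^2}\le C_{\epsilon_0}(1+J(t))\,g(t)$ with $g\in L^1(0,T)$ supplied by Propositions \ref{fazel61}, \ref{propo:3}, after which Gronwall closes the argument. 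As before I would collect all the smallness thresholds on $\epsilon_0$ and fix a single admissible value at the end, ignoring $I_4,K_3$ as being strictly easier than $I_3,K_2$.

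For the linear forcing terms — $K_1=\epsilon_0^{2-3\al}\int F^5\La^{3-4\al}\Theta\,dx$ and the analogous $I_2$ — I would split on the sign of $3-4\al$. When $3-4\al<0$ the operator $\La^{3-4\al}$ smooths, so $\|\La^{3-4\al}\Theta\|$ is controlled by an $L^q$ norm of $\Theta$ (hence of $\theta_0$) through Sobolev embedding, leaving a pure power of $\|F\|_{L^{\f{12}{2-\al}}}$ to absorb. When $3-4\al>0$ I would interpolate $\|\La^{3-4\al}\Theta\|$ between the dissipation $\|\La^{3\be}\Theta\|_{L^2}$ and a maximum-principle/$L^p$ bound on $\Theta$; the crucial point, exactly as in Proposition \ref{propo:3}, is that $\al>\f23$ makes the interpolation exponent strictly less than one (and the accompanying $\|F\|$ power strictly below the dissipation's $6$), so Young's inequality deposits the top order into the two dissipation terms and the remainder into $C_{\epsilon_0}(1+J)$.

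The commutators are the heart of the matter and all follow the same template: pair against the available dissipation, then invoke the lossless estimates of Section \ref{sec:2.2}. For $I_1$ I would write it as $\epsilon_0^\al\langle\La^{-\al/2}[\La^{\f{1+\be}{2}},U\cdot\nabla]F,\La F\rangle$, split $U=U_F+U_\Theta$, and bound the bracket by \ref{20} for the $U_F$ piece (with $s_1=\f\al2,\ s_2=\f{1+\be}2$, so $s_2-s_1=\be\in(0,1)$) and by the endpoint \ref{25} for the $U_\Theta$ piece, using $\|\Theta\|_{L^\infty}=\|\theta_0\|_{L^\infty}$. For $I_3$, the pairing reduces it to $\epsilon_0\langle\La^{\be}[R_\al,U\cdot\nabla]\Theta,\La F\rangle$, and I would estimate $\|\La^\be[R_\al,U\cdot\nabla]\Theta\|_{L^2}$ by Corollary \ref{200} (the admissible regime $s=\be<\al$, $2\be<a\le1$), again splitting into $U_F$ and $U_\Theta$. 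For $I_5$ the $U_\Theta$ part is handled by \ref{25} with $s_1=\f\be2,\ s_2=\f{5\be}2,\ s_3=\al$ — which is admissible precisely because $\f{5\be}2<\f\be2+\al$, i.e.\ $2\be<\al$, i.e.\ $\al>\f23$ — yielding a term absorbable into $\|\La^{3\be}\Theta\|^2_{L^2}$, while the $U_F$ part goes through $\La^{-\be/2}$ and \ref{20}. Finally $K_2$ splits with Corollary \ref{cor:lk} (\ref{f:10}) for the $U_F$ piece and \ref{fazel6} for the $U_\Theta$ piece, the surviving $\Theta$-factors again interpolated into $\|\La^{3\be}\Theta\|_{L^2}$.

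The main obstacle is that this is the top rung: the $F$-dissipation supplies only a single full derivative $\|\La F\|_{L^2}$ together with $\|F\|_{L^{\f{12}{2-\al}}}$, while the velocity pieces $U_F\sim\La^{-1}F$ carry $\nabla U_F\sim F$, now controlled no better than $L^6$. Consequently the $U_F$-commutators ($I_1^f,I_3^f,I_5^f,K_2^f$) sit exactly at the borderline and tolerate no derivative loss whatsoever, which is why the lossless Lemmas \ref{le:com1}, \ref{le:com2} — rather than the lossy Lemma \ref{le:com} — are indispensable here, and why every interpolation and Young exponent must be checked to fall strictly on the favorable side of the threshold $\al=\f23$. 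Keeping simultaneous track of the three couplings, so that each top-order contribution lands in the correct one of the three dissipation integrals with a compatible power of $\epsilon_0$, is the bookkeeping that makes the estimate delicate rather than routine.
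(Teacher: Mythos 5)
Your proposal follows essentially the same route as the paper's proof: the identical energy setup ($p=6$, $s=\frac{1+\beta}{2}$, $\kappa=\frac{5\beta}{2}$ in \ref{21}, \ref{641}, \ref{65}), the same pairing of each commutator against its dissipation term with the split $U=U_F+U_\Theta$, the same lossless estimates \ref{20}, \ref{25}, \ref{200} with the same admissibility checks (e.g. $\frac{5\beta}{2}<\frac{\beta}{2}+\alpha$ iff $\alpha>\frac23$ for $I_5^{\theta}$), and the same Gronwall closure fed by Propositions \ref{fazel61} and \ref{propo:3}. One cosmetic slip: after pairing $I_2$ against $\|\partial F\|_{L^2}$, the exponent remaining on $\Theta$ is $4-5\alpha$ (not $3-4\alpha$), so that case split is on the sign of $4-5\alpha$, exactly as in the paper.
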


\begin{proof}
In \ref{21}, \ref{641} and \ref{65} take $p= 6$, $s= \f{1+ \be}{2}$ and $\kappa= \f{5 \be}{2}$ to get

\begin{eqnarray*}
\partial_t (\f{1}{6} \| F \|^6_{L^6}+ \f{1}{2} \|\La^{\f{1+ \be}{2}} F \|^2_{L^2}+ \f{1}{2} \|\La^{\f{ 5 \be}{2}} \Theta \|^2_{L^2})
&+& \epsilon_0^{\al- \be} \| F \|^6_{L^{\f{12}{2- \al}}}+ \epsilon_0^{\al- \be} \|\partial F \|^2_{L^2}+  \|\La^{3 \be} \Theta \|^2_{L^2} \\
&\leq& K_1+ K_2+ K_3+ I_1+ I_2+ I_3+ I_4+ I_5
\end{eqnarray*}
\subsubsection{Estimate for $K_1$}~\\
We consider two cases:\\
{\bf Case 1: $3-4\alpha \geq 0$}:\\
By Holder inequality
\[
K_1 \leq \epsilon_0^{2- 3 \alpha} |\int F^5 \Lambda^{3- 4 \alpha} \Theta dx| \leq \epsilon_0^{2- 3 \alpha} \|F\|^5_{L^{\frac{12}{2- \alpha}}} \|\Lambda^{3- 4 \alpha} \Theta \|_{L^{\frac{12}{5 \alpha+ 2}}}.
\]
By Sobolev inequality
\[
\|\Lambda^{3- 4 \alpha} \Theta \|_{L^{\frac{12}{5 \alpha+ 2}}} \leq \|\Lambda^{\frac{22-29 \alpha}{6}} \Theta \|_{L^2}.
\]
Now since for $\f{2}{3} < \al \leq \f{3}{4}$, $0 < \frac{22-29 \alpha}{6} < \f{3 \be}{2}$, there is a $0< \gamma < 1$ such that
\[
\|\Lambda^{\frac{22-29 \alpha}{6}} \Theta \|_{L^2} \leq \|\La^{\f{3\be}{2}} \Theta \|^{\gamma}_{L^2} \|\Theta \|^{1- \gamma}_{L^2}= C,
\]
therefore
\[
K_1 \leq \epsilon_0^{2- 3 \alpha} C \|F\|^5_{L^{\frac{12}{2- \alpha}}} \leq \frac{\epsilon_0^{\alpha- \beta}}{100} \|F\|^6_{L^{\frac{12}{2- \alpha}}} + C_{\epsilon_0}
\]
{\bf Case 2: $3-4\alpha < 0$}:\\
Use  Holder and Sobolev inequalities to get
\begin{eqnarray*}
K_1 &\leq& \epsilon_0^{2- 3\alpha} \|F\|^5_{L^6} \|\La^{3- 4\alpha} \Theta\|_{L^6} \leq \epsilon_0^{2- 3\alpha} \|F\|^5_{L^6} \|\Theta\|_{L^{\frac{6}{12 \alpha- 8}}}\\
&\leq& \f{1}{100} \|F\|^6_{L^6} + C_{\epsilon_0}
\end{eqnarray*}
Now we put both cases together to get
\[
K_1 \leq \frac{\epsilon_0^{\alpha- \beta}}{100} \|F\|^6_{L^{\frac{12}{2- \alpha}}}+ \frac{1}{100} \|F\|^6_{L^6}+ C_{\epsilon_0}
\]
\subsubsection{Estimate for $K_2$}~\\
{\bf Estimate for $K^f_2$:} \\
In \ref{20} take $s_1= 0$, $s_2= \beta$, $a= 1$, $V= \La^{-1}F$, $\phi= \Theta$, $q= \frac{12}{2-\alpha}$ and $r= \f{2}{\al}$ to get
\begin{eqnarray*}
K^f_2 = \epsilon_0 |\langle [R_{\alpha}, U_F . \nabla] \Theta, F^5\rangle| &\leq& \epsilon_0 \|F\|^5_{L^{\f{12}{2- \al}}} \|[R_{\al}, U_F . \nabla] \Theta\|_{L^{\f{12}{5 \al+ 2}}} \\
&\leq& \epsilon_0 \|F\|^6_{L^{\frac{12}{2- \alpha}}} \| \La^{\beta} \Theta \|_{L^{\frac{2}{\alpha}}}
\end{eqnarray*}
then
\[
\|\La^{\be} \Theta\|_{L^{\f{2}{\al}}} \leq \|\La^{\f{3 \be}{2}} \Theta \|^{\f{2}{3}}_{L^2} \|\Theta\|^{\f{1}{3}}_{L^{\f{2}{3 \al- 2}}}= C
\]
therefore
\[
K_2^f \leq \epsilon_0 C \|F \|^6_{L^{\f{12}{2- \al}}} \leq \f{\epsilon_0^{\al- \be}}{100} \|F \|^6_{L^{\f{12}{2- \al}}}
\]
where we took $\epsilon_0$ such that
\[
\epsilon_0 \leq [\frac{1}{100  C}]^{\frac{1}{2 \beta}}.
\]
{\bf Estimate for $K^{\theta}_2$:} 
\[
K^{\theta}_2= \epsilon_0 |\langle [R_{\alpha}, U_{\Theta} . \nabla] \Theta, F^5\rangle| \leq \epsilon_0 \|\La^{\be} (F^5)\|_{L^{\f{12}{7 (2- \al)}}} \|\La^{- \be}[R_{\al},U_{\Theta} . \nabla] \Theta\|_{L^{\f{12}{7 \al- 2}}},
\]
by Kato-Ponce, 
\[
\|\La^{\be} (F^5)\|_{L^{\f{12}{7 (2- \al)}}} \leq C \|\La^{\be} F\|_{L^{\f{4}{ 2- \al}}} \| F\|^4_{L^{\f{12}{2- \al}}}
\]
then
\[
\|\La^{\be} F\|_{L^{\f{4}{ 2- \al}}} \leq \|\partial F\|^{\be}_{L^2} \|F\|^{\al}_{L^4},
\]
and if \ref{20} we take $s_1= s_2= \be$, $V= \La^{- \al} \Theta$, $\phi= \Theta$, $a= \al+ \f{\be}{2}$ and $q= r= \f{24}{7 \al- 2}$ we have
\[
\|\La^{- \be}[R_{\al},U_{\Theta} . \nabla] \Theta\|_{L^{\f{12}{7 \al- 2}}}  \leq \|\La^{\f{\be}{2}} \Theta\|^2_{L^{\f{24}{7 \al- 2}}} \leq (\|\La^{3 \be} \Theta\|^{\f{1}{6}}_{L^2} \|\Theta \|^{\f{5}{6}}_{L^{\f{20}{7 \al- 4}}})^2
\]
therefore
\begin{eqnarray*}
K_2^{\theta} &\leq& \epsilon_0 C \|F \|^4_{L^{\f{12}{2- \al}}} \|\partial F\|^{\be}_{L^2} \|\La^{3 \be} \Theta\|^{\f{1}{3}}_{L^2}\\
&\leq & \f{\epsilon^{\al- \beta}_0}{100}
 \|F\|^6_{L^{\f{12}{2- \al}}}+ (\epsilon^{\f{1+ 4 \be}{3}}_0 C  
 \|\partial F\|^{\be}_{L^2} \|\La^{3 \be} \Theta\|^{\f{1}{3}}_{L^2})^3,
\end{eqnarray*}
now since $3 (\be + \f{1}{3}) < 2$, 
\[
K_2^{\theta} \leq  \f{\epsilon^{\al- \beta}_0}{100} \|F\|^6_{L^{\f{12}{2- \al}}}+ \f{\epsilon^{\al- \be}_0}{100} \|\partial F\|^2_{L^2} + \f{1}{100} \|\La^{3 \be} \Theta\|^2_{L^2}+ C_{\epsilon_0}
\]
\subsubsection{Estimate for $I_1$}~\\
{\bf Estimate for $I^f_1$:} \\
\begin{eqnarray*}
I^f_1 &=& \epsilon_0^{\al} | \langle [\La^{\f{1+ \be}{2}}, U_F . \nabla] F, \La^{\f{1+ \be}{2}} F \rangle| \leq \epsilon_0^{\al} \|\partial F\|_{L^2} \| \La^{- \f{\al}{2}} [\La^{\f{1+ \be}{2}}, U_F . \nabla] F\|_{L^2}\\
&\leq& \f{\epsilon_0^{\al- \be}}{100} \|\partial F\|^2_{L^2}+ \epsilon_0 C \| \La^{- \f{\al}{2}} [\La^{\f{1+ \be}{2}}, U_F . \nabla] F\|^2_{L^2},
\end{eqnarray*}
then if in \ref{20} we take $s_1= \f{\al}{2}$, $s_2= \f{1+ \be}{2}$, $V= \La^{-1} F$, $\phi= F$, $a= 1$ and $q= r= 4$, then a using of \ref{20}, Sobolev inequality and Gagliardo-Nirenberg gives
\begin{eqnarray*}
\| \La^{- \f{\al}{2}} [\La^{\f{1+ \be}{2}}, U_F . \nabla] F\|_{L^2} &\leq& \|F\|_{L^4} \|\La^{\be} F\|_{L^4}= C \|\La^{\be} F\|_{L^4} \\
&\leq & C \|\La^{\f{1+ 2 \be}{2}} F\|_{L^2} \leq C \|\partial F\|^{\f{1+ 2 \be}{2}}_{L^2} \|F\|^{\f{1- 2 \be}{2}}_{L^2}
\end{eqnarray*}
therefore
\begin{eqnarray*}
I^f_1 &\leq& \f{\epsilon_0^{\al- \be}}{100} \|\partial F\|^2_{L^2}+ \epsilon_0 C  \|\partial F \|^{1+ \be}_{L^2} \\
&\leq& \f{\epsilon_0^{\al- \be}}{50} \|\partial F\|^2_{L^2}+ C_{\epsilon_0}.
\end{eqnarray*}
{\bf Estimate for $I^{\theta}_1 $:} \\
\begin{eqnarray*}
I^{\theta}_1 &=& \epsilon_0^{\al} | \langle [\La^{\f{1+ \be}{2}}, U_{\Theta} . \nabla] F, \La^{\f{1+ \be}{2}} F \rangle| \leq \epsilon_0^{\al} \|\partial F\|_{L^2} \| \La^{- \f{\al}{2}} [\La^{\f{1+ \be}{2}}, U_{\Theta} . \nabla] F\|_{L^2}\\
&\leq& \f{\epsilon_0^{\al- \be}}{100} \|\partial F\|^2_{L^2}+ \epsilon_0 C \| \La^{- \f{\al}{2}} [\La^{\f{1+ \be}{2}}, U_{\Theta} . \nabla] F\|^2_{L^2},
\end{eqnarray*}
now if in \ref{25} we take $s_1= \f{\al}{2}$, $s_2= \f{1+ \be}{2}$, $s_3= \al$, $V= \Theta$, $\phi= F$ then Gagliardo-Nirenberg yields
\begin{eqnarray*}
\| \La^{- \f{\al}{2}} [\La^{\f{1+ \be}{2}}, U_{\Theta} . \nabla] F\|_{L^2} &\leq& \|\Theta\|_{L^{\infty}} \|\La^{2 \be} F\|_{L^2}= C \|\La^{2 \be} F\|_{L^2} \\
&\leq & C \|\partial F\|^{2 \be}_{L^2} \|F\|^{1- 2 \be}_{L^2}
\end{eqnarray*}
therefore
\begin{eqnarray*}
I^{\theta}_1 &\leq& \f{\epsilon_0^{\al- \be}}{50} \|\partial F\|^2_{L^2}+ C_{\epsilon_0}.
\end{eqnarray*}
\subsubsection{Estimate for $I_2$}~\\
\[
I_2= \epsilon_0^{2- 3 \al} |\langle \La^{2 (\be- \al)+ \f{1+ \be}{2}} \partial_1 \Theta, \La^{\f{1+ \be}{2}} F\rangle| \leq \epsilon_0^{2- 3 \al} \|\partial F\|_{L^2} \|\La^{4- 5 \al} \Theta\|_{L^2}.
\]
To find the bound for the right hand side,we consider two cases.\\
{\bf Case $1$: $4- 5 \al \geq 0$},\\
In this case since, $4- 5 \al \leq 3 \be$, there is a $0 < \gamma< 1$, such that
\[
\|\La^{4- 5 \al} \Theta\|_{L^2} \leq \|\La^{3 \be} \Theta\|^{\gamma}_{L^2} \|\Theta\|^{1- \gamma}_{L^2},
\]
therefore in this case
\[
I_2 \leq \epsilon_0^{2- 3 \al} C \|\partial F\|_{L^2} \|\La^{3 \be} \Theta\|^{\gamma}_{L^2} \leq \f{\epsilon_0^{\al- \be}}{100} \|\partial F\|^2_{L^2}+ \f{1}{100} \|\La^{3 \be} \Theta\|^2_{L^2}+ C_{\epsilon_0}
\]
{\bf Case $2$: $4- 5 \al < 0$},\\
In this case by Sobolev inequality we have 
\[
\|\La^{4- 5 \al} \Theta\|_{L^2} \leq \|\Theta\|_{L^{\f{2}{5 \al- 3}}}.
\]
Note that $\f{2}{5 \al- 3}  \geq 1$, hence
\[
I_2 \leq \epsilon_0^{2- 3 \al} C \|\partial F\|_{L^2} \leq \f{\epsilon_0^{\al- \be}}{100} \|\partial F\|^2_{L^2}+ C_{\epsilon_0}.
\]
\subsubsection{Estimate for $I_3$}~\\
{\bf Estimate for $I^f_3$:} \\
\begin{eqnarray*}
I_3^f &=& \epsilon_0 |\langle \La^{\f{1+ \be}{2}} [R_{\al}, U_F . \nabla] \Theta, \La^{\f{1+ \be}{2}} F\rangle| \leq \epsilon_0 \|\partial F\|_{L^2} \|\La^{\be} [R_{\al}, U_F . \nabla] \Theta\|_{L^2}\\
&\leq& \f{\epsilon_0^{\al- \be}}{100} \|\partial F\|^2_{L^2} + \epsilon^{1+ 2 \be}_0 C \|\La^{\be} [R_{\al}, U_F . \nabla] \Theta\|^2_{L^2}
\end{eqnarray*}
Now in \ref{200} take $s= \be$, $V= U_F$, $\vp= \Theta$, $a= 1$, $q= 6$, and $r= 3$ to get
\[
\|\La^{\be} [R_{\al}, U_F . \nabla] \Theta\|_{L^2} \leq  \|F\|_{L^6} \|\La^{2 \be} \Theta\|_{L^3} 
\]
then
\begin{eqnarray*}
\epsilon^{1+ 2 \be}_0 C \|\La^{\be} [R_{\al}, U_F . \nabla] \Theta\|^2_{L^2} 
&\leq& \f{1}{100} \|F\|^6_{L^6}+ \epsilon_0^{\f{3 (1+ 2 \be)}{2}} \|\La^{2 \be} \Theta\|^3_{L^3}.
\end{eqnarray*}
Now 
\[
\|\La^{2 \be} \Theta\|_{L^3} \leq \|\La^{3 \be} \Theta\|^{\f{2}{3}}_{L^2} \| \Theta\|^{\f{1}{3}}_{L^{\infty}}
\]
then we take 
\[
\epsilon_0 \leq (\f{1}{100 C \|\Theta_0\|_{L^{\infty}}})^{\f{2}{3 (1+ 2 \be)}}
\]
to get
\[
I_3^f \leq  \f{1}{100} \|F\|^6_{L^6}+ \f{\epsilon_0^{\al- \be}}{100} \|\partial F\|^2_{L^2}+ \f{1}{100} \|\La^{3 \be} \Theta\|^2_{L^2}. 
\]
{\bf Estimate for $I^{\theta}_3$:} \\
\begin{eqnarray*}
I_3^{\theta} &=& \epsilon_0 |\langle \La^{\f{1+ \be}{2}} [R_{\al}, U_{\Theta} . \nabla] \Theta, \La^{\f{1+ \be}{2}} F\rangle| \leq \epsilon_0 \|\partial F\|_{L^2} \|\La^{\be} [R_{\al}, U_{\Theta} . \nabla] \Theta\|_{L^2}\\
&\leq& \f{\epsilon_0^{\al- \be}}{100} \|\partial F\|^2_{L^2} + \epsilon^{1+ 2 \be}_0 C \|\La^{\be} [R_{\al}, U_{\Theta} . \nabla] \Theta\|^2_{L^2}.
\end{eqnarray*}
Now in \ref{200} take $s= \be$, $V= U_{\Theta}$, $\vp= \Theta$, $a= 1$, $q= 6$, and $r= 3$ to get
\begin{eqnarray*}
\|\La^{\be} [R_{\al}, U_{\Theta} . \nabla] \Theta\|_{L^2} &\leq& \|\La^{\be} \Theta\|_{L^6} \|\La^{2 \be} \Theta\|_{L^3}\\
&\leq& ( \|\La^{3 \be} \Theta\|^{\f{1}{3}}_{L^2} \|\Theta\|^{\f{2}{3}}_{L^{\infty}}) ~ (  \|\La^{3 \be} \Theta\|^{\f{2}{3}}_{L^2} \|\Theta\|^{\f{1}{3}}_{L^{\infty}})\\
&=& \|\La^{3 \be} \Theta\|_{L^2} \|\Theta\|_{L^{\infty}}
\end{eqnarray*}
therefore if we choose
\[
\epsilon_0 \leq (\f{1}{100 C \|\Theta_0\|^2_{L^{\infty}}})^{\f{1}{1+ 2 \be}}
\]
we get
\[
I_3^{\theta} \leq \f{\epsilon_0^{\al- \be}}{100} \|\partial F\|^2_{L^2}+ \f{1}{100} \|\La^{3 \be} \Theta\|^2_{L^2}
\]
{\bf Estimate for $I_5$}~\\
{\bf Estimate for $I^f_5$:} \\
\begin{eqnarray*}
I_5^f &=& \epsilon_0^{\al} |\langle [\La^{\f{5 \be}{2}}, U_F . \nabla] \Theta, \La^{\f{5\be}{2}} \Theta\rangle| \leq \epsilon_0^{\al} \|\La^{3 \beta} \Theta\|_{L^2} \|\La^{- \f{\be}{2}} [\La^{\f{5 \be}{2}}, U_F . \nabla] \Theta \|_{L^2}\\
&\leq& \f{1}{100} \|\La^{3 \be} \Theta\|^2_{L^2}+ \epsilon_0^{2 \al} C \|\La^{- \f{\be}{2}} [\La^{\f{5 \be}{2}}, U_F . \nabla] \Theta \|^2_{L^2}\\
&\leq& \f{1}{100} \|\La^{3 \be} \Theta\|^2_{L^2}+ \epsilon_0^{2 \al} C \|F\|^2_{L^6} \|\La^{2 \be} \Theta \|^2_{L^3}\\
&\leq& \f{1}{100} \|\La^{3 \be} \Theta\|^2_{L^2}+ \f{\epsilon_0^{\al- \be}}{100}  \|F\|^6_{L^6} + \epsilon_0^{\f{1+ 4 \al}{2}} C \|\La^{2 \be} \Theta \|^3_{L^2}\\
&\leq&  \f{1}{100} \|\La^{3 \be} \Theta\|^2_{L^2}+ \f{\epsilon_0^{\al- \be}}{100}  \|F\|^6_{L^6} + \epsilon_0^{\f{1+ 4 \al}{2}} C \|\La^{3 \be} \Theta \|^2_{L^2} \|\Theta\|_{L^{\infty}}\\
&\leq&  \f{1}{100} \|\La^{3 \be} \Theta\|^2_{L^2}+ \f{\epsilon_0^{\al- \be}}{100}  \|F\|^6_{L^6} + \f{1}{100} \|\La^{3 \be} \Theta \|^2_{L^2}
\end{eqnarray*}
where we took 
\[
\epsilon_0 \leq (\f{1}{100 C \|\Theta_0\|_{L^{\infty}}})^{\f{2}{1+ 4\al}}.
\]
{\bf Estimate for $I^{\theta}_5$:} \\
 A using of \ref{25} and Gagliardo-Nirenberg gives
\begin{eqnarray*}
I_5^f &=& \epsilon_0^{\al} |\langle [\La^{\f{5 \be}{2}}, U_{\Theta} . \nabla] \Theta, \La^{\f{\be}{2}} \Theta\rangle| \leq \epsilon_0^{\al} \|\La^{3 \beta} \Theta\|_{L^2} \|\La^{- \f{\be}{2}} [\La^{\f{5 \be}{2}}, U_{\Theta} . \nabla] \Theta \|_{L^2}\\
&\leq& \epsilon_0^{\al} \|\La^{3 \beta} \Theta\|^2_{L^2} \|\Theta\|_{L^{\infty}} \\
&\leq& \f{1}{100} \|\La^{3 \beta} \Theta\|^2_{L^2}
\end{eqnarray*}
where we took
\[
\epsilon_0 \leq (\f{1}{100 C \|\Theta_0\|_{L^{\infty}}})^{\f{1}{\al}}
\]
which completes the proof.
\end{proof}

\appendix

\section{Commutator estimates}
Before we proceed with the proofs of Lemma \ref{le:com1} and Lemma  \ref{le:com2}, we would like to present some classical estimates for maximal functions, 
which will be used frequently in this section.  First, there  is the point-wise control of Littlewood-Paley operators by the maximal function, namely 
 $$
 (\De_k f) (x)+ (\De_{<k-10} f)(x) \leq C \cm[f](x).
 $$
Another useful result is the  Fefferman-Stein estimate for the maximal function  (see Theorem 4.6.6, p. 331, \cite{Graf}), which states that $\cm$ is a bounded operator from $L^p(l^r)$ into itself. More explicitly, for every $r,p\in (1, \infty)$, there is $C_{p,r}$, so that 
$$
\| (\sum_k (\cm g_k )^r)^{1/r}\|_{L^p}\leq C_{p,r} \| (\sum_k |g_k|^r)^{1/r}\|_{L^p}
$$
Another basic tool is   the following standard para product decomposition 
$$
\Delta_k(f g)=   \Delta_k( f_{<k-10}  g_{\sim k})+ \Delta_k(f_{\sim k} g_{<k+10})+\Delta_k(\sum_{l=k+10}^\infty f_l g_{\sim l}),
$$
available for say every pair of Schwartz functions $f,g$. 
We refer to the corresponding terms as low-high, high-low and high-high interaction terms. 

In what follows, we present the proof of \ref{20} and \ref{201}. The difference between the two estimates is only in the dependence on the derivatives 
 $\pm s_1$ taken on the commutators. Below, we take $\La^{-s_1}$ (matching the setup in \ref{20}), but  we assume $s_1\in (-1,1)$ as to cover both \ref{20} and \ref{201}. A crucial condition that needs to be met though is that $s_2-s_1\leq 1$. 
\subsection{Proof of \ref{20} and \ref{201}} 
We first present the proof for the hardest  case $a=1$. We then discuss the necessary adjustments for the general case $a\in [s_2-s_1,1)$. 
Start with 
$$
 \La^{-s_1}[\La^{s_2}, V \cdot \nabla] \vp]=\sum_k \De_k[\La^{-s_1}[\La^{s_2}, V \cdot \nabla] \vp]].
$$

 Each one of these terms generates a separate entry for the estimate \ref{20}.  
 
 \subsubsection{Low-high terms}  For the low-high term, which is usually the hardest one in commutator estimates theory, we need to estimate $\|I_{low,high}\|_{L^p}$, where 
 $$
 I_{low, high}(x)= \sum_k \De_k[\La^{-s_1}[\La^{s_2}, V_{<k-10} \cdot \nabla] \vp_{\sim k}]]  
  $$
  In fact, we will show the estimate only under the restriction 
 $2<q\leq \infty$ and {\bf no restrictions on $s_2, s_1$}. More precisely, $q=\infty$ and any $s_1, s_2$ are allowed for the low-high interaction terms. Below, we tacitly assume $q<\infty$, the proof for $q=\infty$ requires minor modifications, which are left to the reader. 
  By Littlewood-Paley  theory, it suffices to control $\|S\|_{L^p}$, where the  Littlewood-Paley square function $S$ is given by 
 \begin{eqnarray*}
S^2(x) &=&  \sum_k |\De_k[\La^{-s_1}[\La^{s_2}, V_{<k-10} \cdot \nabla] \vp_{\sim k}]](x)|^2= \\
&=&
 \sum_k 2^{2 k(s_2- s_1)} |\De^1_k[[\De_k^2, V_{<k-10} \cdot \nabla] \vp_{\sim k}]](x)|^2,
 \end{eqnarray*}
 where $\De_k^{j}, j=1,2$ are modified Littlewood-Paley operators similar to $\De_k$. We will show that for $p_1, q_1\in (1, \infty): \f{1}{p_1}+\f{1}{q_1}=1$, we have the pointwise bound 
 \begin{equation}
 \label{g:50}
| [\De_k^2, g \cdot \nabla] f ](x)|\leq C  \cm[|\nabla g|^{q_1}](x)^{1/q_1} \cm[|f|^{p_1}](x)^{1/p_1}.
 \end{equation}
 where $\nabla\cdot g=0$ and $\cm$ is the Hardy-Littlewood maximal function. 
 
 Assuming \ref{g:50}, let us show the estimate for the low-high piece of \ref{20}. We have for all $p_1, q_1\in (1, \infty): \f{1}{p_1}+\f{1}{q_1}=1$ 
 \begin{eqnarray*}
 S^2(x)  &\leq &  \sum_k 2^{2k(s_2-s_1)} |\De^1_k[[\De_k^2, V_{<k-10} \cdot \nabla] \vp_{\sim k}]](x)|^2\leq \\
 &\leq & \sum_k 2^{2k(s_2-s_1)} \cm[[\De_k^2, V_{<k-10} \cdot \nabla] \vp_{\sim k}]^2 \\
 &\leq & C \sum_k 2^{2k(s_2-s_1)} |\cm[\cm[|\nabla V_{<k-10}|^{q_1}]^{1/q_1} 
 \cm[|\vp_{\sim k}|^{p_1}]^{1/p_1}]^2.
 \end{eqnarray*}
 Clearly, $\cm[|\nabla V_{<k-10}|^{q_1}]\leq C \cm[[\cm(\nabla V)]^{q_1}]$. Thus, by the Fefferman-Stein estimates  and by the  H\"older's inequality 
 \begin{eqnarray*}
 \|S\|_{L^p} &\leq &  C  \|(\sum_k 2^{2k(s_2-s_1)} |\cm[\cm[|\nabla V_{<k-10}|^{q_1}]^{1/q_1} 
 \cm[|\vp_{\sim k}|^{p_1}]^{1/p_1}|^2)^{1/2}\|_{L^p}\leq \\
 &\leq & C  \| \cm[\cm(\nabla V)]^{q_1}])^{1/q_1} (\sum_k 2^{2k(s_2-s_1)} 
 \cm[|\vp_{\sim k}|^{p_1}]^{2/p_1})^{1/2}\|_{L^p}\leq  \\
 &\leq & \| \cm[[\cm(\nabla V)]^{q_1}]^{1/q_1}\|_{L^q} 
 \|(\sum_k 2^{2k(s_2-s_1)}\cm[|\vp_{\sim k}|^{p_1}]^{2/p_1}])^{1/2}\|_{L^r}.
 \end{eqnarray*}
 Here, we need to select $q_1<q$, so that we can estimate (by the boundedness of $\cm$ on $L^{q/q_1}$)
 \begin{eqnarray*} 
 \| \cm[[\cm(\nabla V)]^{q_1}]^{1/q_1}\|_{L^q} &=& \| \cm[[\cm(\nabla V)]^{q_1}]\|_{L^{q/q_1}}^{1/q_1}\leq 
 C \|\cm(\nabla V)^{q_1}\|_{L^{q/q_1}}^{1/q_1}\leq \\
 &\leq & C \| |\nabla V|^{q_1} \|_{L^{q/q_1}}^{1/q_1}=C\|\nabla V\|_{L^q}.
 \end{eqnarray*} 
 For the other term, let $p_1: p_1<2, p_1<r$. Upon introducing $g_k:=[2^{k(s_2-s_1)} |\vp_{\sim k}|]^{p_1}$, we have by Fefferman-Stein and Littlewood-Paley theory that 
 \begin{eqnarray*} 
& &   \|(\sum_k 2^{2k(s_2-s_1)}\cm[|\vp_{\sim k}|^{p_1}]^{2/p_1}])^{1/2}\|_{L^r}  =     \| (\sum_k |\cm g_k|^{2/p_1})^{1/2}\|_{L^{r}}\leq \\
  &\leq & 
   \| (\sum_k |\cm g_k|^{2/p_1})^{p_1/2}\|_{L^{r/p_1}}^{1/p_1} \leq C  \| (\sum_k |g_k|^{2/p_1})^{p_1/2}\|_{L^{r/p_1}}^{1/p_1} = \\
   &=& 
   C  \| (\sum_k 2^{2k(s_2-s_1)} |\vp_{\sim k}|^2)^{p_1/2}\|_{L^{r/p_1}}^{1/p_1} 
   = C  \| (\sum_k 2^{2k(s_2-s_1)} |\vp_{\sim k}|^2)^{1/2}\|_{L^{r}}\leq C \|\La^{s_2-s_1} \vp\|_{L^r}.
 \end{eqnarray*} 
 
   Analyzing the inequalities $p_1<2, p_1<r$ and $q_1<q$ shows that as long as $q>2$, we can always select $p_1, q_1: \f{1}{p_1}+\f{1}{q_1}=1$  with the required properties. This is easily seen by selecting $q_1=q-\epsilon, p_1=\f{q_1}{q_1-1}=\f{q-\epsilon}{q-1-\epsilon}$ for some small $\epsilon$. Thus, we have shown 
   \begin{equation}
   \label{36}
\|I_{low, high}\|_{L^p}\leq C \|\nabla V\|_{L^q} \|\La^{s_2-s_1} \vp\|_{L^r}.
 \end{equation}
   To finish the proof in this case, we need to prove \ref{g:50}. But this is a 
 simple application of the following representation formula for commutators
 \begin{eqnarray*}
 [\De_k^2,g\cdot \nabla] f(x) &=&  2^k [\De_k^3, g\cdot  ] f(x)=2^{3 k} \int_{\rtwo} 
 \chi_3(2^k(x-y)) [g(x)-g(y)]   f(y) dy=\\
 &=&  2^{3 k} \int_{\rtwo} 
 \chi_3(2^k(x-y)) (\int_0^1 \dpr{\nabla g(y+z(x-y))}{x-y} dz)   f(y) dy. 
 \end{eqnarray*}
 Clearly, after estimating this last expression, 
 \begin{eqnarray*}
 & & | [\De_k^2, V_{<k-10} \cdot \nabla] \vp_{\sim k}](x)| \leq   \\
 &\leq & C 2^{2k} \int_0^1 
 \int_{\rtwo} |\chi_4(2^k(x-y))| |\nabla g(y+z(x-y)| |f(y)| dy dz  \\
 &\leq & C \int_0^1 (\int_{\rtwo} 2^{2k} |\chi_4(2^k(x-y))| |f(y)|^{p_1} dy)^{1/p_1} \times \\
 &\times & 
 (\int_{\rtwo} 2^{2k} |\chi_4(2^k(x-y))| |\nabla g(y+z(x-y) |^{q_1} dy)^{1/q_1}, 
 \end{eqnarray*} 
 where $\chi_4(w)=\chi_3(w) w_i, i=1,2$. Clearly, 
 $$
 \int_{\rtwo} 2^{2k} |\chi_4(2^k(x-y))| |f(y)|^{p_1} dy\leq C \cm[|f|^{p_1}](x),
 $$
 Also, 
 \begin{eqnarray*}
 & & \int_{\rtwo} 2^{2k} |\chi_4(2^k(x-y))| |\nabla g(y+z(x-y) |^{q_1} dy=\int_{\rtwo} 2^{2k} 
 |\chi_4(2^k l)| |\nabla g(x-(1-z)l)|^{q_1} dl =  \\
 &=& \int_{\rtwo} \f{2^{2k}}{(1-z)^2} 
 |\chi_4(\f{2^k}{1-z} m)| |\nabla g(x-m)|^{q_1} dm\leq C \cm[|\nabla g|^{q_1}](x).
 \end{eqnarray*}
 This establishes \ref{g:50}.

 \subsubsection{High-low term} 
 Here, we need the assumption $s_2-s_1\leq 1$, but $q,r$ may be arbitrary (i.e. one does not $2<q$), as long as $\f{1}{p}=\f{1}{q}+\f{1}{r}$. 

In this case, the commutator structure does not play much role, so we just deal with the two terms separately. In fact, the term $\La^{-s_1}\De_k[V_{\sim k}\cdot \nabla \La^{s_2} \vp_{<k+10}]$ is simpler, so we omit its analysis. For the other term, we have by Littlewood-Paley theory (and its vector-valued version) and H\"older's 
\begin{eqnarray*}
& & \|\sum_k \La^{s_2-s_1}\De_k[V_{\sim k}\cdot \nabla  \vp_{<k+10}]\|_{L^p}\sim 
\|(\sum_k 2^{2k(s_2-s_1)} |\De_k[V_{\sim k}\cdot \nabla  \vp_{<k+10}]|^2)^{1/2}\|_{L^p} \leq \\
& \leq & C  \|(\sum_k 2^{2k(s_2-s_1)} |V_{\sim k}\cdot |\nabla  \vp_{<k+10}|^2)^{1/2}\|_{L^p}  \leq     \\
& \leq  & C
\|(\sum_k 2^{2k} |V_{\sim k}|^2)^{1/2}\|_{L^q}  \|\sup_k 2^{k(s_2-s_1-1)} |\nabla  \vp_{<k+10}|\|_{L^r}.
\end{eqnarray*}
Clearly, $\|(\sum_k 2^{2k} |V_{\sim k}|^2)^{1/2}\|_{L^q}\sim \|\nabla V\|_{L^q}$. For $s_2-s_1=1$, we have \\ 
$\|\sup_k 2^{k(s_2-s_1-1)} |\nabla  \vp_{<k+10}|\|_{L^r}\leq C \|\cm[\nabla \vp]\|_{L^r}\leq  C \|\La \vp\|_{L^r}$. 

For $s_2-s_1<1$, we can estimate  point-wise 
\begin{eqnarray*}
& & 2^{k(s_2-s_1-1)} |\nabla  \vp_{<k+10}| \leq   2^{k(s_2-s_1-1)} \sum_{l<k+10} |\nabla \vp_l |\leq \\
&\leq & C  \sum_{l<k+10} 2^{(l-k)(1-(s_2-s_1))} 2^{l(s_2-s_1)} 2^{-l} |\nabla \vp_l | \leq    C \cm[\La^{s_2-s_1} \vp].
\end{eqnarray*}
From these estimate, we conclude 
$$
\|\sup_k 2^{k(s_2-s_1-1)} |\nabla  \vp_{<k+10}|\|_{L^r}\leq C \| \cm[\La^{s_2-s_1} \vp]\|_{L^r}\leq C \|\La^{s_2-s_1}\vp\|_{L^r}.
$$

\subsubsection{High-high interactions} 
For this term, we need $s_1<1$ and $q>2$. 

Again that the commutator structure is not important and one term is simpler. So, we concentrate on 
$$
\La^{-s_1} \sum_k \De_k[\sum_{l=k+10}^\infty V_l \cdot \nabla \La^{s_2} \vp_{\sim l}]= \La^{-s_1} 
\sum_k \nabla  \De_k[\sum_{l=k+10}^\infty V_l \cdot \La^{s_2} \vp_{\sim l}]
$$
The contribution of these terms is bounded by 
$$
\|(\sum_k 2^{2k(1-s_1)} |\De_k[ \sum_{l=k+10}^\infty V_l \cdot \La^{s_2}\vp_{\sim l}]|^2)^{1/2} \|_{L^p}.
$$
By Littlewood-Paley theory the last expression is bounded by 
$$
I=\|(\sum_k 2^{2k(1-s_1)}  |\sum_{l=k+10}^\infty V_l \cdot \La^{s_2}\vp_{\sim l}|^2)^{1/2} \|_{L^p}.
$$
But  for $s_1<1$, we have 
\begin{eqnarray*}
 & & \sum_k 2^{2k(1-s_1)}  |\sum_{l=k+10}^\infty V_l \cdot \La^{s_2}\vp_{\sim l}|^2= \\
 &=&  
\sum_{l_1} V_{l_1} \cdot \La^{s_2}\vp_{\sim l_1} \sum_{l_2} V_{l_2} \cdot \La^{s_2}\vp_{\sim l_2} \sum_{k<\min(l_1, l_2)-10} 2^{2k(1-s_1)} \leq \\
&\leq & C \sum_{l_1} V_{l_1} \cdot \La^{s_2}\vp_{\sim l_1} \sum_{l_2} V_{l_2} \cdot \La^{s_2}\vp_{\sim l_2}  2^{2\min(l_1,l_2)(1-s_1)} \leq \\
&\leq & 
C \sum_{l} |V_{l}|^2 2^{2l}
 \sum_{l} |\La^{s_2-s_1}\vp_{\sim l} |^2.
\end{eqnarray*}
By H\"older's 
$$
I\leq \|(\sum_{l} |V_{l}|^2 2^{2l})^{1/2}\|_{L^q} \|(\sum_{l} |\La^{s_2-s_1}\vp_{\sim l} |^2)^{1/2}\|_{L^r}\leq C\|\nabla V\|_{L^q} \|\La^{s_2-s_1} \vp\|_{L^r}.
$$

In order to extend the results to  the case $a\in [s_2-s_1,1)$, it suffice to go over the different terms. For the low-high interaction term, we have, by our previous estimates 
\begin{eqnarray*}
 S^2(x)  &\leq &  C \sum_k 2^{2k(s_2-s_1)} |\cm[\cm[|\nabla V_{<k-10}|^{q_1}]^{1/q_1} 
 \cm[|\vp_{\sim k}|^{p_1}]^{1/p_1}]^2= \\
 &=& C \sum_k  |\cm[\cm[2^{k(s_2-s_1)} 2^{-k}|\nabla V_{<k-10}|^{q_1}]^{1/q_1} 
 \cm[2^k |\vp_{\sim k}|^{p_1}]^{1/p_1}]^2\leq \\
 &\leq & C \sum_k  |\cm[ \cm[\cm|\La^{s_2-s_1} V|^{q_1}]^{1/q_1}  \cm[2^k |\vp_{\sim k}|^{p_1}]^{1/p_1}]|^2
 \end{eqnarray*}
 Applying the Fefferman-Stein estimates yields (assuming $p_1<2, p_1<r, q_1<q$) 
 \begin{eqnarray*} 
 \|I_{low, high}\|_{L^p}\sim \|S\|_{L^p} &\leq &  C \| \cm[\cm|\La^{s_2-s_1} V|^{q_1}]^{1/q_1}\|_{L^q} \|( \sum_k  \cm[2^k |\vp_{\sim k}|^{p_1}]^{1/p_1})^{1/2}\|_{L^r}  \\
 &\leq & 
 C \|\La^{s_2-s_1} V\|_{L^q} \|\La \vp\|_{L^r}.
 \end{eqnarray*} 
An interpolation between the last estimate and \ref{36} yields the required estimate 
$$
\|I_{low, high}\|_{L^p}\leq \|\La^a V\|_{L^q} \|\La^{s_2-s_1+1-a}\vp\|_{L^r}. 
$$
Next, for the high-low terms, we  clearly have the following  bound 
$$
2^{k(s_2-s_1)} |\De_k[V_{\sim k}\cdot \nabla  \vp_{<k+10}]|(x)\leq C 
\cm[\cm[ \La^{s_2-s_1} V_{\sim_k}] \cm[\La^{1} \vp]], 
$$
Applying the same arguments as above  yields the bound \\ $\|I_{high, low}\|_{L^p} \leq C \|\La^{s_2-s_1} V\|_{L^q} \|\La^{1} \vp\|_{L^r}$, which by interpolation results in 
$$
\|I_{high, low}\|_{L^p} \leq C \|\La^a V\|_{L^q} \|\La^{s_2-s_1+1-a} \vp\|_{L^r}
$$
for all $a\in [s_2-s_1,1]$. 

Finally in the high-high case, one may move all the derivatives between $V$, $\vp$ (since they are both localized at the same frequency $l$), so in particular 
$$
\|I_{high, high}\|_{L^p}\leq C \|\La^a V\|_{L^q} \|\La^{s_2-s_1+1-a} \vp\|_{L^r}
$$

\subsection{Proof of \ref{25}} 
We start again with the low-high term. In this case, the estimate for $\|I_{low,high}\|_{L^2}$ is actually already contained in the estimates for $I_{low, high}$, since we have already remarked that in there, one can take $q=\infty$. 

Next, we verify the contribution of the high-low terms interactions.  We have by Littlewood-Paley theory   that 
\begin{eqnarray*}
& & \|I_{high, low}\|_{L^2}^2 \leq  C 
\sum_k 2^{2k(s_2-s_1-s_3)} \|V_{\sim k}\cdot \nabla  \vp_{<k+10}\|_{L^2}^2 \leq \\
&\leq & C \|V\|_{L^\infty}^2 
\sum_k 2^{2k(s_2-s_1-s_3)} \|\nabla  \vp_{<k+10}\|_{L^2}^2\leq \\
&\leq & C  \|V\|_{L^\infty}^2 
\sum_k 2^{2k(s_2-s_1-s_3)} \sum_{l<k+10} 2^{2l} \|\vp_{l}\|_{L^2}^2\\
&\leq & C \|V\|_{L^\infty}^2 \sum_l 2^{2l(1+s_2-s_1-s_3)} \|\vp_{l}\|_{L^2}^2 \leq C \|V\|_{L^\infty}^2 
\|\La^{s_2-s_1+1-s_3} \vp\|_{L^2}^2. 
\end{eqnarray*}
where in the derivation, we have used that 
$\sum_{k>l-10} 2^{2k(s_2-s_1-s_3)}\leq C 2^{2l(s_2-s_1-s_3)}$, which requires 
that $s_2-s_1-s_3<0$. 

Finally, we turn our attention to the high-high terms. Again, the commutator structure is unimportant here and we might as well consider the two terms separately. One of them is actually simpler (where $\La^{s_2}$ is acting on the low frequency outside), so we consider the other term only, namely 
$$
\La^{-s_1} \sum_k \De_k[\sum_{l=k+10}^\infty \La^{-s_3} V_l \cdot \nabla \La^{s_2} \vp_{\sim l}]= \La^{-s_1} 
\sum_k \nabla  \De_k[\sum_{l=k+10}^\infty \La^{-s_3}  V_l \cdot \La^{s_2} \vp_{\sim l}]
$$
Note that here again, we have moved $\nabla$ outside, because $\nabla\cdot V=0$. 
Taking $L^2$ norms yields 
\begin{eqnarray*}
& & \|I_{high, high}\|_{L^2}^2 \leq  C \sum_k 2^{2k(1-s_1)} \| \sum_{l=k+10}^\infty \La^{-s_3} V_l \cdot  \La^{s_2} \vp_{\sim l}\|_{L^2}^2 \leq \\
&\leq & C \|V\|_{L^{\infty}}^2 
\sum_k 2^{2k(1-s_1)}  (\sum_{l=k+10}^\infty 2^{l(s_2-s_3)} \|\vp_{\sim l}\|_{L^2})^2=\\
&=&  C \|V\|_{L^{\infty}}^2  \sum_{l_1}2^{l_1(s_2-s_3)} \|\vp_{\sim l_1}\|_{L^2} 
\sum_{l_2} 2^{l_2(s_2-s_3)} \|\vp_{\sim l_2}\|_{L^2} 
\sum_{k<\min(l_1, l_2)-10} 
2^{2k(1-s_1)}.
\end{eqnarray*}
Now, since $1-s_1>0$, we have 
$$
\sum_{k<\min(l_1, l_2)-10} 
2^{2k(1-s_1)}\leq C 2^{2\min(l_1, l_2)(1-s_1)} = C 2^{l_1(1-s_1)} 2^{l_2(1-s_1)} 2^{-|l_1-l_2|(1-s_1)}.
$$
Plugging this inside our estimate for $\|I_{high, high}\|_{L^2}^2 $ and applying Cauchy-Schwartz  we obtain 
\begin{eqnarray*}
\|I_{high, high}\|_{L^2}^2 & \leq &  C  \|V\|_{L^{\infty}}^2 \sum_{l_1, l_2} 2^{(l_1+l_2)(1-s_1+s_2-s_3)} 
\|\vp_{\sim l_1}\|_{L^2} 
 \|\vp_{\sim l_2}\|_{L^2} 2^{-|l_1-l_2|(1-s_1)}  \\
&\leq &  C \|V\|_{L^{\infty}}^2 \sum_{l_1, l_2} 2^{2l_1(1-s_1+s_2-s_3)} \|\vp_{\sim l_1}\|_{L^2}^2 
2^{-|l_1-l_2|(1-s_1)}\leq \\
&\leq& C\|V\|_{L^\infty}^2 \|\La^{1-s_1+s_2-s_3} \vp\|_{L^2}^2. 
\end{eqnarray*}
This concludes the proof of \ref{25} and thus of Lemma \ref{le:com1}.


\begin{thebibliography}{}
          
          
 

 

 


\bibitem{Ch} D. Chae, Global regularity for the 2D Boussinesq equations with partial viscosity terms, {\em Adv. Math.} {\bf 203} (2006), 497--513.

\bibitem{Cha1} D. Chae, S.-K. Kim and H.-S. Nam, Local existence and blow-up criterion of H\"{o}lder continuous solutions of the Boussinesq equations, {\it Nagoya Math. J. \bf 155} (1999), 55--80.

 


\bibitem{Con_D}P. Constantin and C.R. Doering, Infinite Prandtl number convection, {\it J. Statistical Physics \bf 94} (1999),  159--172.

\bibitem{CV}  P. Constantin and V. Vicol, Nonlinear maximum principles for dissipative linear nonlocal operators and applications,  {\it Geom. Funct. Anal. \bf 22} (2012), 1289--1321.


 


 
\bibitem{Dan} R. Danchin, Remarks on the lifespan of the solutions to some models of incompressible fluid mechanics, {\it Proc. Amer. Math. Soc. \bf 141} (2013), 1979--1993.


\bibitem{DoeringG} C. Doering and J. Gibbon,  \textit{Applied analysis of the Navier-Stokes equations},  Cambridge Texts in Applied Mathematics, Cambridge University Press, Cambridge, 1995.

\bibitem{DI} J. Droniou, C. Imbert, Fractal first order partial differential equations, {\it Arch. Ration. Mech. Anal.} {\bf 182} (2006) p. 299--331.


\bibitem{Gill} A.E. Gill, \textit{Atmosphere-Ocean Dynamics},  Academic Press, London, 1982.

 \bibitem{Graf} L. Grafakos,  Classical and modern Fourier analysis. Pearson Education, Inc., Upper Saddle River, NJ, 2004. 



 

\bibitem{HKR1} T. Hmidi, S. Keraani and F. Rousset,    Global well-posedness for a Boussinesq-Navier-Stokes system with critical dissipation, {\it J. Differential Equations \bf 249} (2010), 2147--2174.

\bibitem{HKR2} T. Hmidi, S. Keraani and F. Rousset, Global well-posedness for Euler-Boussinesq system with critical dissipation, {\it Comm. Partial Differential Equations \bf 36} (2011), 420--445.

\bibitem{HL} T. Hou and C. Li, Global well-posedness of the viscous Boussinesq equations, {\em Discrete and Cont. Dyn. Syst.} {\bf 12} (2005), 1--12.


\bibitem{JMWZ} Q. Jiu, C. Miao, J. Wu and Z. Zhang, The 2D incompressible Boussinesq equations with general critical dissipation, {\it SIAM J. Math. Anal. \bf 46} (2014), 3426-3454.

\bibitem{JWYang} Q. Jiu, J. Wu, and W. Yang, Eventual regularity of the two-dimensional Boussinesq equations with supercritical dissipation,  {\em J.Nonlinear Sci.} {\bf 25} (2015), p. 37--58. 




\bibitem{LaiPan} M. Lai, R. Pan and K. Zhao, Initial boundary value problem for two-dimensional viscous Boussinesq equations, {\it Arch. Ration. Mech. Anal. \bf 199} (2011),  739--760.

\bibitem{LLT} A. Larios, E. Lunasin and E.S. Titi, Global well-posedness for the 2D Boussinesq system with anisotropic viscosity and without heat diffusion, {\it J. Differential Equations \bf 255} (2013), 2636--2654.

 
\bibitem{Maj} A.J. Majda, \textit{Introduction to PDEs and Waves for the Atmosphere and Ocean}, Courant Lecture Notes in Mathematics {\bf 9}, AMS/CIMS, 2003.


\bibitem{MB} A.J. Majda and A.L. Bertozzi, \textit{Vorticity and Incompressible Flow}, Cambridge University Press, Cambridge, 2001.

\bibitem{MX} C. Miao and L. Xue, On the global well-posedness of a class of Boussinesq- Navier-Stokes systems,  {\it NoDEA Nonlinear Differential Equations Appl. \bf 18} (2011), 707--735.

 

\bibitem{PS} A. Pekalski, K. Sznajd-Weron, \textit{Anomalous Diffusion. From Basics to Applications,}  Lecture Notes in Phys., vol. 519,  Springer-Verlag, Berlin, 1999.

\bibitem{P}  J. Pedlosky, Geophysical Fluid Dyanmics, Springer-Verlag, New York, 1987.


\bibitem{SaWu} A. Sarria and J. Wu, Blowup in stagnation-point form solutions of the inviscid 2d Boussinesq equations,  {\it  J. Differential Equations} {\bf 259} 
 (2015), no. 8, p. 3559--3576. 


\bibitem{SW} A. Stefanov, J. Wu, A global regularity result for the 2D Boussinesq equations with critical dissipation, to appear, Journal d'Anal. Math. 


\bibitem{Doering1}B. Wen, N. Dianati, E. Lunasin, G.  Chini and C. Doering,  New upper bounds and reduced dynamical modeling for Rayleigh-Bénard convection in a fluid saturated porous layer, {\it Commun. Nonlinear Sci. Numer. Simul. \bf 17} (2012),  2191--2199.

\bibitem{Doering2} J. Whitehead and C. Doering, Internal heating driven convection at infinite Prandtl number, {\it
J. Math. Phys. \bf 52} (2011), 093101, 11 pp.

 \bibitem{Wu} J. Wu, The 2D Boussinesq equations with partial or fractional dissipation, Lectures on the analysis of nonlinear partial differential equations, Morningside Lectures in Mathematics, Edited by Fang-Hua Lin and Ping Zhang, International Press, Somerville, MA, 2014, in press.


\bibitem{Wu_Xu_Ye} J. Wu, X. Xu and Z. Ye, Global smooth solutions to the n-dimensional damped models of incompressible fluid mechanics with
    small initial datum,   {\it J. Nonlinear Sci.} {\bf 25} (2015), no. 1, p. 157--192.
    
\bibitem{WXXY}    J. Wu, X. Xu, L. Xue, Z. Ye,  Regularity results for the 2D Boussinesq equations with critical and
supercritical dissipation, {\it Commun. Math. Sci.}, {\bf 14},  (2016), p. 1963--1997.




\bibitem{Xu} X. Xu,  Global regularity of solutions of 2D Boussinesq
equations with fractional diffusion, {\it Nonlinear Analysis: TMA \bf 72} (2010), 677--681.

\bibitem{YangJW} W. Yang, Q. Jiu and J. Wu, Global well-posedness for
a class of 2D Boussinesq systems with fractional
dissipation, {\it J. Differential Equations \bf 257} (2014),  4188--4213.

\bibitem{Ye_16} Z. Ye, A note on global regularity results for 2D Boussinesq equations with fractional dissipation,
to appear in {\it  Ann. Polon. Math.}, available arXiv:1506.08993v1 [math.AP].

\bibitem{Ye_17} Z. Ye, Global smooth solution to the 2D Boussinesq equations with fractional dissipation, available at arXiv:1510.03237v2 [math.AP].

\bibitem{YEX} Z. Ye and X. Xu, Global regularity of the two-dimensional incompressible generalized magnetohydrodynamics system, {\it Nonlinear Anal. \bf 100} (2014), 86--96.

\bibitem{YX_15} Z. Ye and X. Xu, Remarks on global regularity of the 2D Boussinesq equations with fractional dissipation, {\it Nonlinear Analysis: TMA}, {\bf 125}, 
(2015),  p. 715--724

\bibitem{YX_16} Z. Ye and X. Xu, Global well-posedness of the 2D Boussinesq equations with fractional Laplacian dissipation, {\it Journal of Differential Equations}, 
{\bf 260},  No. 8, (2016),  p. 6716--6744. 

\bibitem{YXX} Z.Ye, X.Xu, L.Xue, On the global regularity of the 2D Boussinesq equations with fractional dissipation, (2014),preprint. 
 

\bibitem{Zhao} K. Zhao, 2D inviscid heat conductive Boussinesq equations on a bounded domain, {\it Michigan Math. J. \bf 59} (2010), 329--352.

%
%
%
%
%
          \end{thebibliography}
          \end{document}